\documentclass[reqno]{amsart}
\usepackage{graphicx, amsmath,amsfonts,amsthm,amssymb, paralist, fullpage}
\usepackage{hyperref}

\vfuzz2pt 
\hfuzz2pt 

\newtheorem{theorem}{Theorem}[section]
\newtheorem{corollary}[theorem]{Corollary}
\newtheorem{lemma}[theorem]{Lemma}

\theoremstyle{definition}

\theoremstyle{remark}
\newtheorem{remark}[theorem]{Remark}
\numberwithin{equation}{section}

\newcommand{\g}{\geqslant}

\newcommand{\RR}{\mathbb{R}}\newcommand{\R}{\mathbb{R}}
\newcommand{\ZZ}{\mathbb{Z}}
\newcommand{\CC}{\mathbb{C}}

\newcommand{\NN}{\mathbb{N}}
\newcommand{\p}{\partial}
\newcommand{\q}{\varphi}
\newcommand{\les}{\leqslant}
\newcommand{\lesa}{\lesssim}

\newcommand{\mc}[1]{\mathcal{#1}}

\newcommand{\eref}[1]{(\ref{#1})}
\newcommand{\lr}[1]{ \langle #1 \rangle}
\newcommand{\ind}{\mathbbold{1}}

\DeclareSymbolFont{bbold}{U}{bbold}{m}{n}
\DeclareSymbolFontAlphabet{\mathbbold}{bbold}

\DeclareMathOperator*{\supp}{supp}

\DeclareMathOperator*{\dist}{dist}

\newcommand{\C}{\mathbb{C}}
\newcommand{\N}{\mathbb{N}}

\newcommand{\D}{\mathcal{D}}

\newcommand{\Z}{\mathbb{Z}}

\newcommand{\EQ}[1]{\begin{equation}\begin{split} #1 \end{split}\end{equation}}
\newcommand{\De}{\Delta}
\newcommand{\pq}{\quad}

\newcommand{\BR}[1]{\left[#1\right]}
\newcommand{\pt}{&}
\newcommand{\pr}{\\ &}

\newcommand{\I}{\infty}
\newcommand{\CAS}[1]{\begin{cases} #1 \end{cases}}
\newcommand{\fy}{\varphi}
\newcommand{\lec}{\lesssim}

\newcommand{\fg}{\mathfrak{g}}
\newcommand{\e}{\varepsilon}

\newcommand{\Ga}{\Gamma}
\newcommand{\de}{\delta}

\newcommand{\ta}{\tau}

\newcommand{\si}{\sigma}
\newcommand{\x}{\xi}
\newcommand{\etc}{,\ldots,}
\newcommand{\la}{\lambda}
\newcommand{\ti}{\widetilde}
\newcommand{\be}{\beta}
\newcommand{\cS}{\mathcal{S}}
\newcommand{\PN}[1]{P^N_{#1}}
\newcommand{\PF}[1]{P^F_{#1}}
\newcommand{\na}{\nabla}
\newcommand{\pn}{}

\newcommand{\LR}[1]{{\langle #1 \rangle}}
\newcommand{\cI}{\mathcal{I}}


\setdefaultenum{(i)}{(a)}{1}{A}


\begin{document}

\title[GWP for the energy-critical Zakharov system]{Global
  wellposedness for the energy-critical Zakharov system below the ground
  state}

\author[T.~Candy]{Timothy Candy}
\address[T.~Candy]{Department of Mathematics and Statistics, University of Otago, PO Box 56, Dunedin 9054, New Zealand}
\email{tcandy@maths.otago.ac.nz}

\author[S.~Herr]{Sebastian Herr}
\address[S.~Herr]{Fakult\"at f\"ur
  Mathematik, Universit\"at Bielefeld, Postfach 10 01 31, 33501
  Bielefeld, Germany}
\email{herr@math.uni-bielefeld.de}

\author[K.~Nakanishi]{Kenji Nakanishi}
\address[K.~Nakanishi]{Research Institute for Mathematical Sciences, Kyoto University, Kyoto 606-8502, Japan}
\email{kenji@kurims.kyoto-u.ac.jp}

\begin{abstract}
The Cauchy problem for the Zakharov system in the energy-critical dimension $d=4$ is
considered. We prove that global well-posedness holds in the full (non-radial) energy space for any initial data with energy and wave mass below
the ground state threshold. The result is based on a Strichartz estimate for the Schr\"odinger equation with a potential. More precisely, a Strichartz estimate is proved to hold uniformly for any potential solving the free wave equation with mass below the ground state constraint. The key new ingredient is a bilinear (adjoint) Fourier restriction estimate for solutions of the inhomogeneous Schr\"odinger equation with forcing in dual endpoint Strichartz spaces.
\end{abstract}

\keywords{Zakharov system, global well-posedness, ground state,
  Fourier restriction}
\subjclass[2010]{Primary: 35Q55. Secondary: 42B37, 35L70}

\maketitle


\section{Introduction}

In 1972 Zakharov introduced a set of equations modeling the dynamics of Langmuir waves in plasma physics \cite{zakharov_collapse_1972,Sulem1999,Colin2004,Texier2007}. These are rapid oscillations of an electric field in a conducting plasma. A simplified scalar version of this model, known as the Zakharov system, is given by the coupled system of Schr\"odinger-wave equations
    \begin{equation}\label{eq:Zakharov}
        \begin{split}
           i \p_t u + \Delta u &= v u, \\
          \frac{1}{\alpha} \p_t^2 v - \Delta v &= \Delta |u|^2,
        \end{split}
    \end{equation}
where $u(t,x): \mathbb{R} \times \mathbb{R}^d \to \mathbb{C}$ is the complex envelope of the electric field, $v(t,x): \mathbb{R} \times \mathbb{R}^d \to \mathbb{R}$ is the ion density fluctuation, and the ion sound speed $\alpha>0$ is a fixed constant. The well-posedness of the Zakharov system has attracted considerable attention in the mathematical literature over the past 30 years, see for instance  \cite{Schochet-Weinstein,added1988,ozawa_nonlinear_1992,Kenig1995,Mas08, Guo2018} and the reference therein, in part due to its close relationship with the cubic nonlinear Schr\"odinger equation which formally appears by taking the limit $\alpha \to \infty$.

For the purposes of this paper, it is slightly more convenient to replace \eqref{eq:Zakharov} with the corresponding first order system. More precisely, writing $V := v-i|\na|^{-1}\p_t v$, we see that \eqref{eq:Zakharov} (with $\alpha=1$) is equivalent to
\EQ{ \label{eqn:Zakharov first order}
 \pt i\p_t u + \De u = \Re(V)u,
 \pr i\p_t V + |\na| V = -|\na||u|^2}
and our goal is to study the Cauchy problem with initial data
    \begin{equation}\label{eq:id}
      u(0) = f \in H^s(\RR^d;\C), \qquad \qquad V(0) = g \in H^\ell(\RR^d;\R).
    \end{equation}

The Zakharov system \eqref{eqn:Zakharov first order} is a Hamiltonian system, and in particular, sufficiently regular solutions preserve the (Schr\"odinger) mass and energy
 \EQ{\label{eq:mass-energy}
 M(u):=\int_{\R^d}|u|^2dx, \pq E_Z(u,V):= \int_{\R^d} \frac{1}{2} |\na
 u|^2+\frac{1}{4}|V|^2+\frac{1}{2} \Re(V)|u|^2dx.}
Unfortunately, the energy is sign indefinite, and thus in general does not give any a priori control over the $\dot{H}^1$ norm of $u$. On the other hand, in dimension $d=4$, the indefinite term in the energy can be
controlled by the critical Sobolev embedding
\[
\Big| \int_{\R^4} \Re(V)|u|^2dx\Big| \les  \|V(t)\|_{L^2(\R^4)}\|u(t)\|_{L^4(\R^4)}^2\les
C
\|V(t)\|_{L^2(\R^4)}\|\nabla u(t)\|_{L^2(\R^4)}^2,
\]
see \eqref{eq:cs} below for the sharp constant $C$. In particular, when $d=4$, the kinetic and potential energy have the same scaling, and thus the Zakharov system is energy critical in dimension $d=4$ with energy space corresponding to the regularity $(s,\ell)=(1,0)$.

The energy critical Zakharov system is closely connected to the focusing cubic nonlinear Schr\"odinger equation (NLS)
\begin{equation}\label{eq:nls}
i\partial_t u +\Delta u =-|u|^2u,
\end{equation}
which is also energy-critical in dimension $d=4$ in the sense of invariant scaling. For instance, the cubic NLS \eqref{eq:nls} arises as the subsonic limit $\alpha \to \infty$ of the Zakharov system
\eqref{eq:Zakharov} as rigorously proved in \cite{Schochet-Weinstein,added1988,ozawa_nonlinear_1992,Kenig1995,Mas08}. Furthermore, the ground state for the NLS plays a key role in the global dynamics of the Zakharov system. More precisely, recall the Aubin-Talenti function
    \EQ{W(x)=\big(1+|x|^2/8\big)^{-1},}
and let $W_\la:=\la W(\la x)$ for $\la>0$. The function $W$ is the (non-square integrable) ground state of the focusing cubic NLS \eqref{eq:nls} on $\R^4$, and in particular satisfies
    \EQ{-\De W_\la=W_\la^3 \quad \text{ in }\R^4.}
Thus taking $u = W_\la$ gives a family of static solutions to the cubic NLS \eqref{eq:nls}. It is also an extremiser of the energy-critical Sobolev inequality: For any $\varphi \in \dot H^1(\R^4)$, we have
    \EQ{\label{eq:cs} \|\fy\|_{L^4(\R^4)} \le \frac{\|W\|_{L^4(\R^4)}}{\|\na W\|_{L^2(\R^4)}}\|\na\fy\|_{L^2(\R^4)}.}
In seminal work \cite{Kenig2006} Kenig and Merle proved that the ground state $W$ defines the sharp threshold for global well-posedness, scattering and blow-up in the case of the focusing
cubic NLS on $\R^4$ in the radial setting. This has recently been extended by Dodson \cite{Dodson2019} to the general case.

Returning to the Zakharov system, we observe that the family of static solutions $W_\la$ to the NLS, also gives rise to a family of static solutions for the Zakharov system \eqref{eqn:Zakharov first order} on $\RR^4$ by simply taking $(u,V)=(W_\la,-W_\la^2)$. In particular this shows that there certainly exists non-dispersing solutions to \eqref{eqn:Zakharov first order}. Moreover, we have
 \[E_Z(W,-W^2)=E_S(W)=\frac14\|W^2\|_{L^2}^2, \text{ where }E_S(u):=\int_{\R^d}\frac12 |\nabla u|^2-\frac14 |u|^4dx\] is the energy of NLS.
We refer to the pairs $(W_\la,-W_\la^2)$ as the ground states of the Zakharov system, as similar to the focussing cubic NLS, they play a crucial role in the global dynamics of
\eqref{eq:Zakharov}. Indeed, recently \cite{Guo2018} proved the following,  which is analogous to \cite{Kenig2006}: Under the energy constraint
\EQ{
 E_Z(u,V) <  E_Z(W,-W^2),}
the energy space $(u,V)\in H^1(\R^4)\times L^2(\R^4)$ is topologically split into the two domains
\[
 \{\|V\|_{L^2(\R^4)} < \|W^2\|_{L^2(\R^4)}\} \text{ and }  \{\|V\|_{L^2(\R^4)}>\|W^2\|_{L^2(\R^4)}\}, \]
although the wave $L^2$ norm is not conserved by the Zakharov flow. Further more, all radial solutions in the former domain are global and scattering, while those in the latter can not be global and bounded in the energy space. We refer the reader to the introduction  of \cite{Guo2018} for more details on the connection between the NLS and the Zakharov equation \eqref{eqn:Zakharov first order}.

Regarding the small data theory, in our recent paper \cite{Candy2023} we identified the optimal regularity range for $(s, \ell)$ such that the Cauchy problem for the Zakharov system is well-posed for $d\g4$.
More precisely, for any $(s,\ell)\in \RR^2$  satisfying
   \begin{equation}\label{eqn:cond on s l} \ell \g \frac{d}{2}-2,
   \qquad \max\Big\{\ell-1, \frac{\ell}{2} + \frac{d-2}{4}\Big\}
   \les s \les \ell + 2, \quad (s,\ell) \not = \Big(\frac{d}{2},
   \frac{d}{2}-2\Big),
   \Big(\frac{d}{2},\frac{d}{2}+1\Big)\end{equation}
the Cauchy problem  \eqref{eq:Zakharov}--\eqref{eq:id} for $d\g 4$ is well-posed. In addition, we proved global well-posedness and scattering provided
 that the Schr\"odinger initial datum  $\|f\|_{H^{\frac{d-3}{2}}(\RR^d)}$ is sufficiently small. Notice that in dimension $d=4$, the energy space regularity $(s,\ell)=(1,0)$ is admissible and lies on the boundary of this region. This reflects the fact that the Zakharov system is energy critical in $d=4$. We refer the reader to the introduction of \cite{Candy2023} for a thorough account on the well-posedness problem and references to earlier work. \\

The aim of this paper is to prove global well-posedness in the former domain for $s\ge 1$, which includes the energy space without the radial symmetry assumption. Our main result is the following

 \begin{theorem}[GWP below the ground state]\label{thm:gwp below W}
Let $d=4$ and $(s,\ell)\in\R^2$ satisfy \eqref{eqn:cond on s l} and $s\ge 1$.
Then for any initial data $(u(0),V(0))\in H^s(\R^4)\times H^\ell(\R^4)$ satisfying
\EQ{
 E_Z(u(0),V(0)) <  E_Z(W,-W^2), \pq \|V(0)\|_{L^2(\R^4)} \les \|W^2\|_{L^2(\R^4)},}
the Zakharov system \eqref{eqn:Zakharov first order} has a unique global solution $(u,V)\in
C(\R;H^s(\R^4)\times H^\ell(\R^4))$ satisfying $ u \in L^2_{t,loc}
W^{\frac12,4}_x $. Moreover, the flow map is locally Lipschitz continuous, the mass $M(u)$ and the energy $E_Z(u,V)$ are conserved, and we have the
global bounds
\begin{align*}
  \sup_{t \in \R}\|\nabla u(t)\|_{L^2}^2& \les
  \frac{2\|W^2\|^2_{L^2}}{ E_Z(W,-W^2)-E_Z(u(0),V(0))}E_Z(u(0),V(0)),\\ \sup_{t\in
  \R}\|V(t)\|_{L^2}^2 &\les 4E_Z(u(0),V(0)).
\end{align*}
\end{theorem}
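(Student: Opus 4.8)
The plan is to combine a good local well-posedness theory at regularity $(s,\ell)$ with an a priori bound on the energy-critical norms that holds as long as the solution stays strictly below the ground state threshold. The local theory is already available from \cite{Candy19}: under \eqref{eqn:cond on s l} the Cauchy problem is locally well-posed, and moreover the local solution persists as long as a scaling-critical norm—here $u\in L^2_t W^{1/2,4}_x$ on the time interval—stays finite. So the whole theorem reduces to a global a priori bound on $\|u\|_{L^2_t W^{1/2,4}_x}$, together with propagation of the conservation laws (which hold for smooth solutions and pass to the limit by the continuity of the flow map). I would first treat the energy-space endpoint $s=1$, $\ell=0$, and then upgrade to general $(s,\ell)$ with $s\ge 1$ by persistence of regularity, since once the critical norm is controlled the higher Sobolev norms propagate by the linear estimates for the Schr\"odinger and wave components.

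The heart of the argument is the variational/virial structure inherited from \cite{Guo2018}. First I would record that the hypotheses $E_Z(u(0),V(0))<E_Z(W,-W^2)$ and $\|V(0)\|_{L^2}\le\|W^2\|_{L^2}$, via the sharp Sobolev inequality \eqref{eq:cs} and the coercivity lemma of \cite{Guo2018}, put the data in the ``subsonic'' connected component $\{\|V\|_{L^2}<\|W^2\|_{L^2}\}$ with a quantitative gap; since $\|V(t)\|_{L^2}$ varies continuously and cannot cross the threshold without contradicting the energy inequality, the solution stays in that component for all time, yielding
\begin{align*}
 \sup_t \|\nabla u(t)\|_{L^2}^2 &\lesssim \frac{2\|W^2\|_{L^2}^2}{E_Z(W,-W^2)-E_Z(u(0),V(0))}\,E_Z(u(0),V(0)),\\
 \sup_t \|V(t)\|_{L^2}^2 &\lesssim 4\,E_Z(u(0),V(0)),
\end{align*}
which are exactly the asserted global bounds. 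In particular $\|V(t)\|_{L^2}$ stays bounded below the ground state constant $\|W^2\|_{L^2}$ uniformly in $t$, and $u$ stays bounded in $\dot H^1\cap L^2$ (hence in $H^1$).

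With these uniform bounds in hand, the remaining—and genuinely new—step is to convert them into the critical spacetime bound. Here I would invoke the paper's main technical input as advertised in the abstract: a Strichartz estimate for $i\p_t + \Delta - \Re(V)$ that is \emph{uniform} over all $V$ solving the free wave equation with mass below the ground state constraint. Concretely, writing $u$ as a solution of $i\p_t u + \Delta u = \Re(V)u$ with $V$ the free wave evolution of its data (the nonlinear wave forcing $-|\nabla||u|^2$ being absorbed by Duhamel and the bilinear estimates), one treats $\Re(V)u$ as a potential term and proves $\|u\|_{L^2_t W^{1/2,4}_x} \lesssim \|u(0)\|_{L^2} + (\text{controlled terms})$ with constant depending only on $\sup_t\|V(t)\|_{L^2}<\|W^2\|_{L^2}$. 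The proof of this Strichartz estimate is where the bilinear adjoint Fourier restriction estimate for the inhomogeneous Schr\"odinger equation with forcing in dual endpoint Strichartz spaces enters, and this is the main obstacle: one must handle the potential $\Re(V)$ perturbatively in a function-space framework robust enough that the smallness is supplied not by the time interval but by the strict deficit below $\|W^2\|_{L^2}$, exploiting the fact that the extremising potential $-W^2$ sits precisely at the borderline of the relevant bilinear estimate. Once this uniform Strichartz bound is established, a standard continuity/bootstrap argument on $\|u\|_{L^2_t W^{1/2,4}_x}$ combined with the conserved quantities closes the global bound, the blow-up criterion of \cite{Candy19} promotes the local solution to a global one, and uniqueness and Lipschitz dependence are inherited from the local theory.
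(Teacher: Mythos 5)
There is a genuine gap in the reduction step. You reduce the theorem to a \emph{global} a priori bound on $\|u\|_{L^2_t W^{1/2,4}_x}$ and claim this bound can be closed by a continuity/bootstrap argument in which ``the smallness is supplied not by the time interval but by the strict deficit below $\|W^2\|_{L^2}$.'' That is not how the mechanism works, and the step would fail. The deficit below the ground state only makes the constant in the Strichartz estimate for $i\partial_t+\Delta-\Re(V)$ \emph{uniform} over free-wave potentials $V=e^{it|\nabla|}g$ with $\|g\|_{L^2}<\|W^2\|_{L^2}$ (Theorem \ref{thm:unif Stz}); it gives no smallness whatsoever for the genuinely nonlinear part of the potential, i.e.\ the wave Duhamel term $\mc{J}_0[|\nabla||u|^2]$ acting back on $u$. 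Closing a bootstrap for $\|u\|_{L^2_t W^{1/2,4}_x}$ on large (let alone infinite) time intervals would amount to a scattering-type spacetime bound, which this paper explicitly does not prove in the non-radial setting — the introduction states that extending scattering beyond the radial case requires additional ingredients (control of propagation, virial/momentum arguments) that are not available here. So the proposed route cannot be completed with the tools at hand.

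The paper's actual argument avoids any global critical spacetime bound. The key is the refined local well-posedness result (Theorem \ref{thm:lwp below W}, proved via Theorem \ref{thm:lwp sharp}): the existence time $T=T(B,M,s)$ depends only on $\|u(0)\|_{H^s}\le M$ and $\|V(0)\|_{L^2}\le B<\|W^2\|_{L^2}$, \emph{not} on the profile of the data. In the contraction the smallness comes from $\|e^{it\Delta}u(0)\|_{L^2_tW^{1/2,4}([0,T])}$ being small on a short interval (which for $s>\frac12$ follows with $T$ depending only on $\|u(0)\|_{H^s}$), while the uniform Strichartz estimate supplies the $V$-independent constant needed to treat the large free-wave potential; the nonlinear wave forcing is handled perturbatively by the bilinear estimates from \cite{Candy19}. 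Global existence then follows by iteration: the variational lemma of \cite{Guo2018} (Lemma \ref{lem:en-bound}) together with conservation of mass and energy, plus a continuity argument showing $\|V(t)\|_{L^2}$ cannot cross the threshold, gives the stated uniform-in-time bounds on $\|\nabla u(t)\|_{L^2}$ and $\|V(t)\|_{L^2}$; these allow one to re-apply the local theorem with a fixed time step $\tau(\epsilon,m)$ at times $T_n\nearrow T^*$, contradicting $T^*<\infty$, and persistence of regularity from \cite{Candy19} upgrades to general $(s,\ell)$ with $s\ge1$. Your treatment of the variational bounds, the threshold-crossing continuity argument, and persistence of regularity is consistent with the paper; the flaw is solely in replacing the norm-only-dependent local existence time by an unobtainable global critical-norm bound.
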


Although no blow-up is known for the Zakharov system in $\R^4$, in view of the existence of blow-up for NLS near the threshold, one may naturally expect that the above condition $\|V(0)\|_{L^2(\R^4)} \les \|W^2\|_{L^2(\R^4)}$ is optimal (see also \cite{Candy2023,Guo2018} for further discussion).

Theorem \ref{thm:gwp below W} follows from the variational properties
of the ground state $(W,-W^2)$, see \cite{Guo2018}, and the following
local well-posedness result with a lower bound on the existence time:
\begin{theorem} \label{thm:lwp below W}
Let $d=4$ , $0<B<\|W^2\|_{L^2(\R^4)}$, $0<M<\I$, and
$\frac{1}{2}<s<2$. Then there exists $T=T(B,M,s)>0$, such that
for any  initial data $(u(0),V(0))\in H^s(\R^4)\times L^2(\R^4)$ satisfying
\EQ{
 \|u(0)\|_{H^{s}}\le M, \pq \|V(0)\|_{L^2}\le B,}
the Zakharov system \eqref{eqn:Zakharov first order} has a unique solution $(u,V)\in C([0,T];H^s(\R^4)\times L^2(\R^4))$
 with the poperty that $ u \in L^2_{t}([0,T];
W^{\frac12,4}_x(\R^4)) $ and the flow map is locally Lipschitz continuous. \end{theorem}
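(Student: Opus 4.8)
The plan is to prove Theorem \ref{thm:lwp below W} by a contraction mapping argument in a suitable iteration space, where the novelty is that the existence time $T$ depends only on the size $B<\|W^2\|_{L^2}$ of the wave datum in $L^2$ (and on $M$, $s$), not on any higher Sobolev norm of $V(0)$. The starting observation is that, by Duhamel, the Schr\"odinger component solves $i\p_t u + \De u = \Re(V)u$ which we view as a linear Schr\"odinger equation with potential $\Re(V)$. Writing $V = V_{\mathrm{lin}} + V_{\mathrm{nl}}$, where $V_{\mathrm{lin}}(t) = e^{-it|\na|}V(0)$ solves the free wave equation and $V_{\mathrm{nl}}$ is the Duhamel term with forcing $-|\na||u|^2$, the key point is that the potential is, to leading order, a free wave of $L^2$-norm $\le B$ below the ground state threshold $\|W^2\|_{L^2}$. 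This is precisely the setting of the uniform Strichartz estimate for the Schr\"odinger equation with a potential advertised in the abstract: such an estimate holds uniformly over all potentials solving the free wave equation with mass below the ground state constant. So the first step is to set up function spaces --- adapted $U^2_\De$/$V^2_\De$-type or $X^{s,b}$-type spaces for the Schr\"odinger part at regularity $s$, together with the wave-admissible norm controlling $V$ in $C_tL^2_x$ and the endpoint Strichartz norm controlling $u \in L^2_tW^{1/2,4}_x$ --- in which the free evolutions are bounded and the Duhamel maps gain integrability.

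The second step is to split the solution map accordingly. For the wave equation the nonlinearity $|\na||u|^2$ is quadratic in $u$ and must be placed in the dual endpoint Strichartz space; here the crucial estimate is the bilinear (adjoint) Fourier restriction estimate for the inhomogeneous Schr\"odinger equation with forcing in dual endpoint Strichartz spaces highlighted as the key new ingredient, which lets us bound $\||\na||u|^2\|$ in the relevant wave space by the $L^2_tW^{1/2,4}_x$-type norm of $u$ squared, and hence control $\|V_{\mathrm{nl}}\|_{C_tL^2_x}$ by a small power of $T$ times $\|u\|^2$. This is what keeps the potential within the ground-state ball on $[0,T]$: $\|\Re(V)(t)\|_{L^2} \le \|V_{\mathrm{lin}}(t)\|_{L^2} + \|V_{\mathrm{nl}}(t)\|_{L^2} \le B + o_{T\to 0}(1) < \|W^2\|_{L^2}$, so the uniform potential-Strichartz estimate applies with a constant depending only on $B$. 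For the Schr\"odinger equation one then estimates the Duhamel term with forcing $\Re(V)u$: splitting $V = V_{\mathrm{lin}} + V_{\mathrm{nl}}$, the term $\Re(V_{\mathrm{lin}})u$ is absorbed into the linear potential (using the uniform potential-Strichartz bound, treating $V_{\mathrm{lin}}$ as the potential), while $\Re(V_{\mathrm{nl}})u$ is genuinely trilinear in $u$ and is estimated by Strichartz and the bilinear restriction estimate again, producing another gain in $T$.

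The third step is the contraction/iteration itself: choosing $T=T(B,M,s)$ small enough that all the $T$-dependent constants are $<1/2$ and the potential stays strictly below threshold, the nonlinear map is a contraction on a ball of the iteration space, giving existence and uniqueness of $(u,V)\in C([0,T];H^s\times L^2)$ with $u\in L^2_t W^{1/2,4}_x$; Lipschitz dependence of the flow map follows by running the same estimates on differences. One technical wrinkle is the range $1/2<s<2$: the lower bound $s>1/2$ is needed for the Schr\"odinger nonlinear estimates (and matches \eqref{eqn:cond on s l} at $\ell=0$), while the upper bound $s<2$ reflects the $s\le\ell+2$ constraint and is needed so that the quadratic wave nonlinearity $|\na||u|^2$ is not too rough; one must check the bilinear estimates are compatible with these endpoints or argue by interpolation/persistence of regularity from a lower value of $s$.

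I expect the main obstacle to be the wave-nonlinearity estimate together with the self-consistency of the potential bound: one must control $|\na||u|^2$ in exactly the dual endpoint Strichartz space for which the new bilinear adjoint restriction estimate is available, with a constant that is uniform and with a genuine positive power of $T$ to close the fixed point --- and simultaneously ensure the resulting $V$ does not push the potential out of the ground-state ball, since it is only inside that ball that the uniform Schr\"odinger-with-potential Strichartz estimate (hence the whole scheme) is valid. The endpoint nature of the spaces ($L^2_tW^{1/2,4}_x$, and its dual) means there is no room to lose, so the delicate multilinear estimates --- rather than the soft contraction argument --- are where the real work lies.
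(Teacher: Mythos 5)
Your proposal follows essentially the same route as the paper: the proof of Theorem \ref{thm:lwp sharp} absorbs only the \emph{free} wave $V_0=e^{it|\nabla|}V(0)$ into the linear propagator via the uniform Strichartz estimate (Theorem \ref{thm:unif Stz}, with constant depending only on $B$), treats $\mc{J}_0[|\nabla||u|^2]\,u$ as a trilinear forcing estimated by the bilinear bounds \eqref{eqn:thm lwp sharp:schro est}--\eqref{eqn:thm lwp sharp:wave est} in dual endpoint Strichartz spaces, and closes the iteration exactly as you describe, with persistence of regularity handling general $(s,\ell)$. Two clarifications are worth recording. First, your intermediate step of showing $\|\Re(V)(t)\|_{L^2}\le B+o_{T\to0}(1)<\|W^2\|_{L^2}$ so that ``the uniform potential-Strichartz estimate applies'' to the full potential is neither available nor needed: Theorem \ref{thm:unif Stz} is proved only for exact free-wave potentials (its proof rests on the profile decomposition for free waves), and the full $V$ is not one; the scheme works precisely because $V_{\mathrm{nl}}u$ is never treated as a potential term. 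Second, the endpoint bilinear estimates carry no direct power of $T$; the smallness that closes the contraction is $\|e^{it\Delta}f\|_{L^2_tW^{1/2,4}([0,T])}$, which is $O(T^{\frac12\min\{1,s-\frac12\}})$ only because $s>\frac12$ --- this is exactly the ``no room to lose'' issue you flag at the end, and it is resolved by making the free Schr\"odinger evolution, not $T$ itself, the small parameter.
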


Although the endpoint $s=\frac{1}{2}$ is excluded from Theorem \ref{thm:lwp below W}, at a cost of complicating the statement, an endpoint version is possible. In particular, the condition on the Schr\"odinger data can be sharpened to instead requiring the weaker condition $\| e^{it\Delta} u(0) \|_{L^2_t W^{\frac{1}{2}, 4}_x([0, T]\times \RR^4)} \ll 1$, see Theorem \ref{thm:lwp sharp}.

Theorem \ref{thm:lwp below W} implies that provided the wave $L^2_x$ norm is below the ground state, the time of existence in fact only depends on the size of the norm $\|V\|_{L^\infty_t L^2_x}$.
This is in a sharp constrast to NLS, which is scaling-critical in the energy space.

The main difficulty in the proof of Theorem \ref{thm:lwp below W} comes from the interaction of the Schr\"odinger component with the large {\it free} wave component, which is the same as in the radial case \cite{Guo2018}, and also the major difference from the NLS case \cite{Kenig2006}.
In particular, the key step is to prove a uniform Strichartz estimate for the model problem
\[
 (i\p_t + \Delta - \Re(V)) u = F, \]
where the potential $V=e^{it|\nabla|} g \in L^\infty_t L^2_x$ is a free wave, see Theorem \ref{thm:unif Stz}.
It is a refined extension from the radial version in \cite[Theorem 1.5]{Guo2018}.
By considering the supersonic limit $\alpha\to 0$ in $V=e^{i\alpha t|\nabla|}g$, it is also linked to the static potential case. See \cite{Guo2018} and the references therein for more background concerning the Strichartz estimate with potentials.

It is crucial for our purpose that the Strichartz estimate is {\it uniform}: the best constant is bounded in terms of $\|g\|_{L^2}$.
Otherwise, the estimate depending on the profile $g$ follows from a perturbative argument and without any size constraint, using the dispersive property of the free wave $V$.
To prove the uniformity, we follow the same strategy as in \cite{Guo2018}: the concentration compactness argument, or the profile decomposition for the free wave potential $V$.
The main difficulty comes from the concentration of $V$, which is described by scaling limit in the profile decomposition, and
corresponding to the supersonic limit in the Schr\"odinger (parabolic) rescaling. The crucial observation is that the interaction with the Schr\"odinger component is essentially localized to the same frequency as the free wave (which is going to $\infty$ by concentration).
In the radial case \cite{Guo2018}, the frequency localization is achieved by using the normal form transformation for lower frequencies of the Schr\"odinger component, and the Strichartz estimate for radial solutions with derivative gain for higher frequencies.
The improved Strichartz is exploited also to treat remainder terms in the profile decomposition and in the supersonic approximation, which are decaying only in negative Sobolev (or Besov) norms.
The main idea of the present paper is to replace the radial Strichartz estimate with bilinear Strichartz (or Fourier restriction) estimates, exploiting the transversal interaction property of free Schr\"odinger solutions in high frequency. Thus, our analysis relies on the non-resonance property of the Schr\"odinger-wave interactions. To be more precise, we prove new bilinear Fourier restriction estimates for solutions of inhomogeneous wave and Schr\"odinger equations with forcing terms in dual endpoint Strichartz norms. This is one of the main novelties of this paper, see Section \ref{sec:fre} for more details.

Extending the remaining part of \cite{Guo2018}, namely the scattering and the weak blow-up, to the non-radial setting will require to solve another difficulty: a priori and global control of propagation.
Similarly to NLS, translation invariance of the Hamiltonian implies conservation of the momentum, and the scaling covariance implies the virial identity, but the Schr\"odinger-wave interaction destroys the Galilei invariance (or the Lorentz invariance for the wave equation), which prevents us from extracting a traveling effect from radiation.

\subsection{Organisation of the paper}\label{subsec:orga}
In Section \ref{sec:prel} we introduce notation, in particular Fourier multipliers and function spaces, and provide some preliminary estimates. In Section \ref{sec:bilest} we prove bilinear estimates for the right hand side of the Schr\"odinger equation. In Section \ref{sec:fre} we discuss bilinear Fourier restriction estimates for products of solutions to inhomogeneous wave and Schr\"odinger equations. In Section \ref{sec:proof of res gain} we apply this to prove refined bilinear estimates which involve a weaker Besov norm. Section \ref{sec:uniform-str} contains one of the main contributions of this paper, namely a uniform Strichartz estimate for the Schr\"odinger equation with a solution to the wave equation as a potential term satisfying the ground state constraint. Finally, in Section \ref{sec:gwp-proof} we first prove (a refined version of) Theorem  \ref{thm:lwp below W} and then derive Theorem \ref{thm:gwp below W}.

\section{Preliminaries}\label{sec:prel}

\subsection{Fourier multipliers}\label{subsec:fm}
In this subsection we fix notation, where we follow \cite[Section 2]{Candy2023} as far as possible.
We require both homogeneous and inhomogeneous Littlewood-Paley decompositions.
Let $\psi(r)\in C^\I_0(\RR)$ such that $r\psi'(r)\le 0\le\psi(r)\le
1$, $\psi(r)=1$ on $|r|\le 1$ and $\psi(r)=0$ on $|r|\ge 2$. We set
$\fy(r):=\psi(r)-\psi(r/2)$ and $\fy_\la(r):=\fy(r/\lambda)$ for $\lambda>0$. Then, we have
\[
 1= \psi(r) + \sum_{1< \la \in 2^\N} \fy_\la(r)
 \text{ for all }r\in\R, \text{ and } 1 = \sum_{ \lambda \in 2^\ZZ}
 \fy_\la \text{ for all } r\not=0.\]
For $\lambda\in 2^\NN$ with $\lambda>1$, define the Fourier multipliers
$$ P_\lambda = \varphi_\la(|\nabla|), \qquad P^{(t)}_\lambda = \varphi_\la( |\p_t|), \qquad C_\lambda = \varphi_\la(i\p_t + \Delta)$$
and for $\lambda =1$ we take
    $$ P_1 = \psi ( |\nabla|), \qquad P^{(t)}_1 = \psi ( |\p_t|), \qquad C_1 = \psi(i\p_t + \Delta)$$
Thus $P_\lambda$ is a (inhomogeneous) Fourier multiplier localising  the spatial Fourier support to the set $\{ \frac{\lambda}{2} < |\xi| < 2 \lambda \}$, $P^{(t)}_\lambda$ localises the temporal Fourier support to the set $\{ \frac{\lambda}{2} \les |\tau| \les 2 \lambda\}$, and $C_\lambda$ localises the space-time Fourier support to distances $\approx \lambda$ from the paraboloid (for $\lambda>1$). We also require homogeneous Fourier decomposition. To this end, we define for $\lambda \in 2^\ZZ$ the (homogeneous) multipliers
    $$ \dot{P}_\lambda = \varphi_\la(|\nabla|).$$
To restrict the Fourier support to larger sets, we use the notation
	\[P_{\les \lambda}=\psi\Big( \frac{|\nabla|}{\la}\Big), \pq P^{(t)}_{\les \lambda}=\psi\Big( \frac{|\p_t|}{\la}\Big), \pq C_{\les \lambda}=\psi\Big( \frac{i\p_t + \Delta}{\la}\Big),\]
and define $P^{(t)}_{>\la}=I-P^{(t)}_{\les \la}$ and $C_{>\la} = I - C_{\les \la}$. For ease of notation, for $\lambda \in 2^\NN\ge 1$ we often use the shorthand $P_\lambda f = f_\lambda$. In particular, note that $u_1 = P_1 u$ has Fourier support in $\{|\xi|\les 2\}$, and we have the identities
        $$ f = \sum_{\lambda \in 2^\NN} f_\lambda, \quad  f =
        \sum_{\lambda \in 2^\ZZ} \dot{P}_\lambda f, \quad \text{ for
          any }f \in L^2(\RR^d).$$
We reserve the notation $f_\lambda$ to denote the inhomogeneous Littlewood-Paley decomposition.

For brevity, let us denote the frequently used decompositions in modulation by
        $$ \PN{\lambda} u := C_{\les (\frac{\lambda}{2^8})^2} P_\lambda u, \qquad  \PF{\lambda} u := C_{> (\frac{\lambda}{2^8})^2} P_\lambda u,$$
and take
    \EQ{
 \PN{} u := \sum_{\la\in 2^\N} \PN{\lambda} u , \pq \PF{} u:= \sum_{\la\in 2^\N} \PF{\lambda} u.}
Thus $\PN{}$ localises to frequencies near to the paraboloid, $\PF{}$ localises to frequencies far from the paraboloid, and we have the identities  $u_\lambda =\PN{\lambda}u+\PF{\lambda}u$ and $u=\PN{}u+\PF{}u$. Note that the multipliers $\PN{}$ and $\PF{}$ have the parabolic scaling
\EQ{
 (\PN{\la} u)(t/\la^2,x/\la)=\PN{2}(u(4t/\la^2,2x/\la)),}
where $\PN{2}$ is a space-time convolution with a Schwartz function, so that we can easily deduce that $\PN{\la}$ and $\PF{\la}$ are bounded on any $L^p_tL^q_x$ uniformly in $\la\in 2^\N$, and that $\PN{}$ and $\PF{}$ are bounded on any $L^2_tB^s_{q,2}$.

\subsection{Function spaces}
In this subsection we introduce the function spaces which are used throughout this paper. Some are (special cases of) \cite[Section 2]{Candy2023}, although in addition we require certain refinements in view of the bilinear restriction estimates in Section \ref{sec:fre}. We define the homogeneous Besov spaces $\dot{B}^s_{q, r}$ using the norm
        $$ \| f \|_{\dot{B}^s_{q,r}} = \Big( \sum_{\lambda \in 2^\ZZ} \lambda^{sr} \| \dot{P}_\lambda f \|_{L^q}^r \Big)^\frac{1}{r}.$$
We use the notation $2^*= \frac{2d}{d-2}$ and $2_* = (2^*)' = \frac{2d}{d+2}$ to denote the endpoint Strichartz exponents for the Schr\"odinger equation. Thus, for $d \g 3$ we have
        \begin{equation}\label{eqn:stand stri} \| e^{it\Delta} f \|_{L^\infty_t L^2_x \cap L^2_t L^{2^*}_x} + \Big\| \int_0^t e^{i(t-s)\Delta} F(s) ds \Big\|_{L^\infty_t L^2_x \cap L^2_t L^{2^*}_x} \lesa \| f\|_{L^2_x} + \| F \|_{L^2_t L^{2_*}_x},\end{equation}
        see \cite{Keel1998}.
To control frequency localised nonlinear terms on the righthand side of the Schr\"odinger equation, for $\lambda \in 2^\NN$ we define
        $$ \| F \|_{N^{s}_\lambda} = \lambda^s  \| C_{\les (\frac{\lambda}{2^8})^2} F \|_{L^2_t L^{2_*}_x} + \lambda^{s-1} \|  F \|_{L^2_{t,x}}.$$
For later use, we observe that an application of Bernstein's inequality gives
    \begin{equation}\label{eqn:thm main schro est nonres:N embedding}
        \| F_{\lambda} \|_{N^s_{\lambda} } \lesa \lambda^s \| F_{\lambda} \|_{L^2_t L^{2_*}_x}.
    \end{equation}
To estimate the frequency localised solution, we define
        $$ \| u \|_{S^{s}_\lambda} = \lambda^s \| u \|_{L^\infty_t L^2_x} + \lambda^s \| u \|_{L^2_t L^{2^*}_x} +  \lambda^{s -1} \| (i \p_t + \Delta) u \|_{L^2_{t,x}}.$$
We require the stronger norm
        $$ \| u \|_{ \underline{S}^s_\lambda} = \lambda^s \| u
        \|_{L^\infty_t L^2_x} + \| (i\p_t + \Delta) u
        \|_{N^s_\lambda}$$
        in order to obtain the refined estimates involving Besov norms.
A computation shows that the norms $\| \cdot \|_{S^s_\lambda}$ and $\| \cdot \|_{\underline{S}^s_\lambda}$ only differ in the low modulation regime. Moreover,  a convexity argument gives control over the  Schr\"odinger admissible Strichartz spaces $L^q_t L^r_x$.

\begin{lemma}\label{lem:strichartz control}
Let $q,r \g 2$, $\frac{1}{q} + \frac{d}{2r} = \frac{d}{4}$, and $d\g 3$. Then for any $\lambda \in 2^\NN$ we have
        $$ \lambda^s \| u_\lambda \|_{L^q_t L^r_x} \lesa \| u_\lambda \|_{S^s_\lambda} \lesa \| u_\lambda \|_{\underline{S}^s_\lambda} $$
and the characterisation
        \begin{equation}\label{eqn:strong charac}
        \| u_\lambda \|_{\underline{S}^s_\lambda} \approx \| u_{\lambda} \|_{S^s_\lambda} + \| (i\p_t + \Delta) P^N_\lambda u \|_{L^2_t L^{2_*}_x}.
       \end{equation}
\end{lemma}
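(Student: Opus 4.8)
The plan is to establish the three relations in turn, in increasing order of difficulty, using only interpolation, the endpoint Strichartz inequality \eqref{eqn:stand stri}, Bernstein's inequality, and the mapping properties of the modulation cut-offs recorded in Section~\ref{subsec:fm}. The first bound, $\lambda^s\|u_\lambda\|_{L^q_tL^r_x}\lesa\|u_\lambda\|_{S^s_\lambda}$, uses no information about the equation: every Schr\"odinger-admissible pair with $d\g 3$ satisfies $2\le q\le\infty$ and lies on the segment between the two endpoints $(\infty,2)$ and $(2,2^*)$, and, writing $\theta=1-\tfrac2q\in[0,1]$, admissibility says precisely that $\tfrac1r=\tfrac\theta2+\tfrac{1-\theta}{2^*}$. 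Hence H\"older in $x$ and then in $t$ gives $\|u_\lambda\|_{L^q_tL^r_x}\le\|u_\lambda\|_{L^\infty_tL^2_x}^{\theta}\,\|u_\lambda\|_{L^2_tL^{2^*}_x}^{1-\theta}$, and since the two endpoint norms occur (with weight $\lambda^s$) among the summands of $\|u_\lambda\|_{S^s_\lambda}$, Young's inequality finishes it.

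For the embedding $\|u_\lambda\|_{S^s_\lambda}\lesa\|u_\lambda\|_{\underline{S}^s_\lambda}$, I would set $F_\lambda:=(i\p_t+\De)u_\lambda$ and note that two of the three summands of $\|u_\lambda\|_{S^s_\lambda}$, namely $\lambda^s\|u_\lambda\|_{L^\infty_tL^2_x}$ and $\lambda^{s-1}\|F_\lambda\|_{L^2_{t,x}}$, are literally summands of $\|u_\lambda\|_{\underline{S}^s_\lambda}$; so the whole content is the bound $\lambda^s\|u_\lambda\|_{L^2_tL^{2^*}_x}\lesa\|u_\lambda\|_{\underline{S}^s_\lambda}$. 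I would split $u_\lambda=\PN{\lambda}u+\PF{\lambda}u$ and treat the two pieces by opposite mechanisms. Since $C_{\les(\lambda/2^8)^2}$ commutes with $i\p_t+\De$, on the space--time frequency support of $\PF{\lambda}u=C_{>(\lambda/2^8)^2}P_\lambda u$ the operator $i\p_t+\De$ is invertible with inverse a Fourier multiplier of size $\lesa\lambda^{-2}$, so $\PF{\lambda}u=(i\p_t+\De)^{-1}C_{>(\lambda/2^8)^2}F_\lambda$ and Plancherel gives $\|\PF{\lambda}u\|_{L^2_{t,x}}\lesa\lambda^{-2}\|F_\lambda\|_{L^2_{t,x}}$; then Bernstein's inequality, which \emph{gains} $\lambda^{d(\frac12-\frac1{2^*})}=\lambda$ in passing from $L^2_x$ \emph{up} to $L^{2^*}_x$, yields $\lambda^s\|\PF{\lambda}u\|_{L^2_tL^{2^*}_x}\lesa\lambda^{s-1}\|F_\lambda\|_{L^2_{t,x}}\le\|F_\lambda\|_{N^s_\lambda}$. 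For the near piece $\PN{\lambda}u$ solves $(i\p_t+\De)\PN{\lambda}u=C_{\les(\lambda/2^8)^2}F_\lambda$, so Duhamel's formula together with the endpoint Strichartz inequality \eqref{eqn:stand stri} --- estimating the initial slice by $\|\PN{\lambda}u\|_{L^\infty_tL^2_x}$ and invoking boundedness of $\PN{\lambda}$ on $L^\infty_tL^2_x$ (the parabolic-rescaling remark of Section~\ref{subsec:fm}) --- gives $\lambda^s\|\PN{\lambda}u\|_{L^2_tL^{2^*}_x}\lesa\lambda^s\|u_\lambda\|_{L^\infty_tL^2_x}+\lambda^s\|C_{\les(\lambda/2^8)^2}F_\lambda\|_{L^2_tL^{2_*}_x}\le\|u_\lambda\|_{\underline{S}^s_\lambda}$. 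Adding the two pieces closes the embedding.

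For the characterisation \eqref{eqn:strong charac}, I would observe that $(i\p_t+\De)\PN{\lambda}u=C_{\les(\lambda/2^8)^2}F_\lambda$, so the extra term on the right-hand side of \eqref{eqn:strong charac}, carrying the weight $\lambda^s$, is exactly the low-modulation summand of $\|F_\lambda\|_{N^s_\lambda}$. Since $\|u_\lambda\|_{\underline{S}^s_\lambda}$ is obtained from $\|u_\lambda\|_{S^s_\lambda}$ by deleting the summand $\lambda^s\|u_\lambda\|_{L^2_tL^{2^*}_x}$ and adjoining precisely this one, the bound $\|u_\lambda\|_{\underline{S}^s_\lambda}\le\|u_\lambda\|_{S^s_\lambda}+\lambda^s\|(i\p_t+\De)\PN{\lambda}u\|_{L^2_tL^{2_*}_x}$ holds termwise, while the reverse bound combines $\|u_\lambda\|_{S^s_\lambda}\lesa\|u_\lambda\|_{\underline{S}^s_\lambda}$ from the previous step with the fact that the adjoined summand is itself $\le\|u_\lambda\|_{\underline{S}^s_\lambda}$.

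The step I expect to be the genuine obstacle is the embedding, and within it the asymmetry between the spaces $L^2_tL^{2_*}_x$ and $L^2_{t,x}$: Bernstein cannot be run from $L^2_x$ \emph{down} to the smaller exponent $L^{2_*}_x$, so the high-modulation part of the forcing cannot be absorbed directly on the forcing side of $N^s_\lambda$; the remedy is to move it to the solution side, where $i\p_t+\De$ is elliptic of size $\sim\lambda^2$ and Bernstein then runs in the favourable direction, for a net gain of $\lambda^{-1}$. Everything else is routine bookkeeping with Littlewood--Paley and modulation projections and the standard inequality \eqref{eqn:stand stri}.
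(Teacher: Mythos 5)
Your proposal is correct and follows essentially the same route as the paper: the convexity/H\"older interpolation between $L^\infty_t L^2_x$ and $L^2_t L^{2^*}_x$ for the admissible Strichartz bound, then the splitting $u_\lambda=\PN{\lambda}u+\PF{\lambda}u$ with the elliptic inversion $\|\PF{\lambda}u\|_{L^2_{t,x}}\lesa\lambda^{-2}\|(i\p_t+\Delta)u_\lambda\|_{L^2_{t,x}}$ plus Bernstein for the far piece and Duhamel with the endpoint Strichartz estimate \eqref{eqn:stand stri} for the near piece, exactly as in the paper's (terser) proof. The only point to note is that you read the extra term in \eqref{eqn:strong charac} as carrying the weight $\lambda^s$, so that it coincides with the low-modulation summand of $\|(i\p_t+\Delta)u_\lambda\|_{N^s_\lambda}$; this is the scale-consistent interpretation (the displayed formula omits the weight), and with it your term-by-term comparison is precisely the intended argument.
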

\begin{proof}
To prove the first inequality, we simply observe that by convexity we have $0\les \theta \les 1$ such that
        $$ \| u_\lambda \|_{L^q_t L^r_x} \lesa \| u_\lambda \|_{L^\infty_t L^2_x}^\theta \| u_\lambda \|_{L^2_t L^{2^*}_x}^{1-\theta}$$
and hence the bound follows from the definition of the norm $\| \cdot \|_{S^s_\lambda}$. Thus it remains to prove \eqref{eqn:strong charac}. But this follows by unpacking the definitions of the norms $\|\cdot \|_{S^s_\lambda}$ and $\| \cdot \|_{\underline{S}^s_\lambda}$, applying the Strichartz estimate \eqref{eqn:stand stri}, and observing that the Fourier localisation of $P^F_\lambda u$ gives
        \begin{align*}
            \| P^F_\lambda u \|_{L^2_t L^{2^*}_x} \lesa \lambda \| P^F_\lambda u \|_{L^2_{t,x}} = \lambda \| (i\p_t + \Delta)^{-1} (i\p_t + \Delta) P^F_\lambda u \|_{L^2_{t,x}}\lesa \lambda^{-1} \| (i\p_t + \Delta) u_\lambda \|_{L^2_{t,x}}.
        \end{align*}
\end{proof}

We sum the dyadic terms in $\ell^2$, and control the full solution using the norms
        $$ \| u \|_{S^{s}} = \Big( \sum_{\lambda \in 2^\NN} \| u_\lambda \|_{S^{s}_\lambda}^2 \Big)^\frac{1}{2},\qquad \| u \|_{\underline{S}^{s}} = \Big( \sum_{\lambda \in 2^\NN} \| u_\lambda \|_{\underline{S}^{s}_\lambda}^2 \Big)^\frac{1}{2} \qquad \| F \|_{N^{s}} = \Big( \sum_{\lambda \in 2^\NN} \| F_\lambda \|_{N^{s}_\lambda}^2 \Big)^\frac{1}{2}$$
where we recall that $u_\lambda = P_\lambda u $, $F_\lambda = P_\lambda F$ denotes the inhomogeneous Littlewood-Paley decomposition.

For technical reasons related to the duality and the energy inequality, we introduce the notation
        $$ \| u \|_Z = \| u \|_{L^\infty_t L^2_x} +  \| ( i\p_t + \Delta) u \|_{L^1_t L^2_x + L^2_t L^{2_*}_x}. $$

To localise any of the normed spaces $F=\underline{S}^s, S^s,\ldots$
defined above  to time intervals $I \subset \RR$, we define the usual temporal restriction norm as
$$ \| u \|_{F(I)} = \inf_{ u'\in F\text{ and } u'|_I = u} \| u'\|_{F},$$
provided that such an extension $u'\in F$ exists.

\subsection{The Duhamel formula}\label{subsec:duhamel}
The free Schr\"odinger group is denoted by $ \mc{U}_0(t)=e^{it\Delta}$.
Fix any $t_0\in\R$ and any interval $I\ni t_0$. Then, the Duhamel integral from $t_0$ is denoted by
\EQ{
 \mc{I}_0[F](t) = -i \int_{t_0}^t e^{i(t-s) \Delta} F(s) ds,}
which is the unique solution $u$ on $I$ of
\EQ{
 i\p_t u + \De u = F, \pq u(t_0)=0.}

Later we will use the more general notation with a time-dependent potential $V(t,x)$ on $I\times \RR^d$, where $\mc{I}_VF$ denotes the unique solution $u$ of
\EQ{
 i\p_t u + \De u - V u = F, \pq u(t_0)=0}
and $\mc{U}_V(t;s)$ to denote the homogeneous solution operator, thus for $t,s\in I$, $\mc{U}_V(t;s) f$ solves
    $$ i\p_t u + \Delta u - V u = 0, \qquad u(s) = f. $$
See Section \ref{ss:Duh exp} for the precise definition and its usage.

The energy inequality we will use for the Schr\"odinger equation is the following:

\begin{lemma}\label{lem:energy ineq}
Let $t_0 \in \RR$. For $u(t)=e^{i(t-t_0)\Delta}f+\mc{I}_0[F](t)$ and
all $\lambda \in 2^\NN$ we have
        \begin{equation}\label{eqn:lem energy ineq:energy}
            \| u_\lambda \|_{\underline{S}^{s}_\lambda} \lesa \lambda^s \| f_\lambda \|_{L^2_x} +  \| F_\lambda \|_{N^{s}_\lambda}
        \end{equation}
and
         \begin{equation}\label{eqn:lem energy ineq:sharp}
            \| u \|_{L^\infty_t L^2_x} + \| u \|_{L^2_t L^{2^*}_x} \lesa \| f \|_{L^2_x} + \sup_{ \| w \|_{Z} \les 1} \Big| \int_{\RR^{1+d}} \overline{w} F \,dt dx \Big|,
         \end{equation}
         and
        \begin{equation} \label{eqn:lem energy ineq:sharp-highmod}
        \sup_{ \| w \|_{Z} \les 1} \Big| \int_{\RR^{1+d}} \overline{w} C_{\gtrsim \lambda^2 } F_\lambda \,dt dx \Big|\lesa \lambda^{-1} \| F_\lambda\|_{L^2}.
	        \end{equation}
\end{lemma}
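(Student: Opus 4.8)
The three estimates are of a similar nature, so I would organize the proof around the standard duality form of the Strichartz inequality \eqref{eqn:stand stri}. For \eqref{eqn:lem energy ineq:energy}, I unpack the definition of $\|\cdot\|_{\underline{S}^s_\lambda}$, namely $\lambda^s\|u_\lambda\|_{L^\infty_tL^2_x}+\|(i\p_t+\Delta)u_\lambda\|_{N^s_\lambda}$. Since $u$ solves $i\p_t u+\Delta u=F$ with $u(t_0)=e^{i(t-t_0)\Delta}f$ removed, we have $(i\p_t+\Delta)u_\lambda=F_\lambda$, so the second term is literally $\|F_\lambda\|_{N^s_\lambda}$ and requires nothing. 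For the first term, the energy part of \eqref{eqn:stand stri} gives $\|u_\lambda\|_{L^\infty_tL^2_x}\lesa\|f_\lambda\|_{L^2_x}+\|F_\lambda\|_{L^2_tL^{2_*}_x}$; then I split $F_\lambda=C_{\les(\lambda/2^8)^2}F_\lambda+C_{>(\lambda/2^8)^2}F_\lambda$, bound the low-modulation piece directly by $\lambda^{-s}\|F_\lambda\|_{N^s_\lambda}$, and for the high-modulation piece use Bernstein in $x$ together with the modulation localisation: $\|C_{>(\lambda/2^8)^2}F_\lambda\|_{L^2_tL^{2_*}_x}\lesa\lambda^{-1}\|C_{>(\lambda/2^8)^2}F_\lambda\|_{L^2_tL^2_x}\lesa\lambda^{-1}\|F_\lambda\|_{L^2_{t,x}}\lesa\lambda^{-s}\|F_\lambda\|_{N^s_\lambda}$ (here one also uses that $C_{>(\lambda/2^8)^2}$ is bounded on $L^2_{t,x}$). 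Multiplying by $\lambda^s$ closes \eqref{eqn:lem energy ineq:energy}.

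For \eqref{eqn:lem energy ineq:sharp} I would argue by duality against the dual Strichartz pair. Writing $u=e^{i(t-t_0)\Delta}f+\mc{I}_0[F]$, the linear part is handled by \eqref{eqn:stand stri} directly, so it suffices to bound $\|\mc{I}_0[F]\|_{L^\infty_tL^2_x\cap L^2_tL^{2^*}_x}$ by the stated supremum. The point is that both $L^\infty_tL^2_x$ and $L^2_tL^{2^*}_x$ are controlled by testing against $w$ with $\|w\|_Z\les1$: more precisely, if $\|g\|_{L^1_tL^2_x}\les1$ or $\|g\|_{L^2_tL^{2_*}_x}\les1$, then $w:=e^{i(t-t_1)\Delta}h+\mc I_0[g]$ (for suitable $h$, $t_1$) has $\|w\|_Z\lesa1$ by the very definition of $Z$ and by \eqref{eqn:stand stri}, and pairing $\overline w$ with $F$ and integrating by parts against the Duhamel operator reproduces $\langle h,\mc I_0[F](t_1)\rangle$ plus the $g$-pairing; taking suprema over such $h,g$ recovers the $L^\infty_tL^2_x$ and $L^2_tL^{2^*}_x$ norms of $\mc I_0[F]$ by the duality $(L^2_tL^{2_*}_x)^*=L^2_tL^{2^*}_x$ and $(L^1_tL^2_x)^*=L^\infty_tL^2_x$. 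The mild subtlety is that the time integration in $\mc I_0$ is from $t_0$, so one must keep track of the causal truncation when integrating by parts; this is the standard Christ–Kiselev / retarded-operator bookkeeping and causes no real trouble since we only need an estimate, not an identity.

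Finally, \eqref{eqn:lem energy ineq:sharp-highmod} is a direct consequence of the duality estimate just set up combined with the modulation gain. For any $w$ with $\|w\|_Z\les1$ we estimate
\[
\Big|\int_{\R^{1+d}}\overline w\,C_{\gtrsim\lambda^2}F_\lambda\,dt\,dx\Big|
=\Big|\int_{\R^{1+d}}\overline{C_{\gtrsim\lambda^2}w}\,F_\lambda\,dt\,dx\Big|
\lesa\|C_{\gtrsim\lambda^2}w\|_{L^2_{t,x}}\,\|F_\lambda\|_{L^2_{t,x}}.
\]
On the support of $C_{\gtrsim\lambda^2}$ we have $|i\p_t+\Delta|\gtrsim\lambda^2$, so $\|C_{\gtrsim\lambda^2}w\|_{L^2_{t,x}}\lesa\lambda^{-2}\|(i\p_t+\Delta)w\|_{L^2_{t,x}}$ on that piece; but $\|(i\p_t+\Delta)w\|_{L^1_tL^2_x+L^2_tL^{2_*}_x}\les\|w\|_Z\les1$, and since $C_{\gtrsim\lambda^2}$ also localises frequency away from the paraboloid (and, after applying it, one may freely insert $P_\lambda$ by the frequency support of $F_\lambda$), Bernstein upgrades the $L^{2_*}_x$ and $L^2_x$ norms of the forcing to $L^2_{t,x}$ at the cost of $\lambda^{+1}$, netting a total gain of $\lambda^{-1}$. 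I expect the main obstacle to be nothing deep but rather the careful bookkeeping in \eqref{eqn:lem energy ineq:sharp}: making the duality argument with the retarded Duhamel operator precise, in particular checking that the test function built from dual-Strichartz data indeed lies in $Z$ with norm $O(1)$ uniformly, and handling the $L^1_tL^2_x+L^2_tL^{2_*}_x$ sum in the definition of $Z$ on the forcing side of $w$.
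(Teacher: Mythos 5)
Your treatments of \eqref{eqn:lem energy ineq:sharp} and \eqref{eqn:lem energy ineq:sharp-highmod} follow essentially the paper's route (build the test function $w$ from the dual Strichartz data via the adjoint Duhamel operator for the former; invert $i\p_t+\Delta$ on high modulations and apply Bernstein separately to the $L^1_tL^2_x$ and $L^2_tL^{2_*}_x$ components of the forcing for the latter), and are fine apart from bookkeeping. The genuine problem is in your proof of \eqref{eqn:lem energy ineq:energy}, specifically the step
\[
\big\| C_{>(\lambda/2^8)^2}F_\lambda\big\|_{L^2_tL^{2_*}_x}\lesa\lambda^{-1}\big\|C_{>(\lambda/2^8)^2}F_\lambda\big\|_{L^2_tL^{2}_x}.
\]
This inequality is false: since $2_*<2$, Bernstein's inequality for functions with spatial Fourier support in $\{|\xi|\approx\lambda\}$ runs in the opposite direction, namely $\|g_\lambda\|_{L^2_x}\lesa\lambda\|g_\lambda\|_{L^{2_*}_x}$, and there is no bound of a lower Lebesgue exponent by a higher one (an $L^2_x$ function with compact Fourier support need not lie in $L^{2_*}_x$ at all). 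Consequently the high-modulation part of the forcing, which the $N^s_\lambda$ norm only controls in $L^2_{t,x}$ through the term $\lambda^{s-1}\|F_\lambda\|_{L^2_{t,x}}$, cannot be fed into the dual endpoint Strichartz estimate, and your argument does not control $\lambda^s\|\mc{I}_0[C_{>(\lambda/2^8)^2}F_\lambda]\|_{L^\infty_tL^2_x}$.

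The missing mechanism is temporal rather than spatial. High modulation means that $G:=e^{-it\Delta}F_\lambda$ has temporal Fourier support in $\{|\tau|\gtrsim\lambda^2\}$, since $C_{>\mu}F=e^{it\Delta}P^{(t)}_{>\mu}\big(e^{-it\Delta}F\big)$. Writing
\[
\mc{I}_0\big[P^F_\lambda F\big](t)=-\,i\,e^{it\Delta}\int_{t_0}^t \p_s\Big(P^{(t)}_{>(\lambda/2^8)^2}\p_t^{-1}G\Big)(s)\,ds,
\]
the fundamental theorem of calculus reduces the $L^\infty_tL^2_x$ bound to $\|P^{(t)}_{>(\lambda/2^8)^2}\p_t^{-1}G\|_{L^\infty_tL^2_x}$, and Bernstein \emph{in time} on each shell $\{|\tau|\approx\mu\}$ with $\mu\gtrsim\lambda^2$ yields a contribution $\mu^{1/2}\cdot\mu^{-1}\|G\|_{L^2_{t,x}}$, which sums over $\mu$ to $\lambda^{-1}\|F_\lambda\|_{L^2_{t,x}}$, exactly matching the $\lambda^{s-1}\|F_\lambda\|_{L^2_{t,x}}$ component of $N^s_\lambda$. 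Without this (or an equivalent high-modulation integration-by-parts argument), \eqref{eqn:lem energy ineq:energy} is not established.
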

\begin{proof}
To prove the energy inequality \eqref{eqn:lem energy ineq:energy}, by the definition of the norm $\| \cdot \|_{\underline{S}^s_\lambda}$ and the Strichartz estimate \eqref{eqn:stand stri}, it suffices to show that
        $$ \| \mc{I}_0[P^F_\lambda F] \|_{L^\infty_t L^2_x} \lesa \lambda^{-1} \| F_\lambda \|_{L^2_{t,x}}. $$
Define $G(t) = e^{-it\Delta} F$ and write $P^F_\lambda F (t) = e^{it\Delta} P^{(t)}_{ > (\frac{\lambda}{2^8})^2} G(t)$ where $P^{(t)}_{\mu}$ restricts the temporal Fourier support to the region $\mu/2 < \tau < 2 \mu$. Then an application of Berstein's inequality gives
       \begin{align*}
            \| \mc{I}_0[P^F_\lambda F] \|_{L^\infty_t L^2_x} &= \Big\| \int_{t_0}^t \p_s \big( P^{(t)}_{> (\frac{\lambda}{2^8})^2} \p_t^{-1} G\big)(s) ds \Big\|_{L^\infty_t L^2_x} \\
            &\lesa \| P^{(t)}_{> (\frac{\lambda}{2^8})^2} \p_t^{-1} G \|_{L^\infty_t L^2_x} \lesa \lambda^{-1} \| G_\lambda \|_{L^2_{t,x}} \approx \lambda^{-1} \| F_\lambda \|_{L^2_{t,x}}
       \end{align*}
as required.

 To prove \eqref{eqn:lem energy ineq:sharp}, we apply duality and obtain
\[  \| u \|_{L^\infty_t L^2_x} + \| u \|_{L^2_t L^{2^*}_x} \lesa  \| f \|_{L^2_x} + \Big| \int_{\RR^{1+d}} \overline{v_1}\mc{I}_0Fdtdx \Big| +\Big| \int_{\RR^{1+d}} \overline{v_2}\mc{I}_0F dtdx \Big|,
\]
where $ \|v_1\|_{L^1_t L^{2}_x}\lesa 1$, $ \|v_2\|_{L^2_t L^{2_*}_x}\lesa 1$. We compute
\[\int_{\RR^{1+d}} \overline{v_j}\mc{I}_0F dtdx= \int_{\RR^{1+d}} \overline{w}F dtdx,\]
for \[
w_j(s)=\begin{cases} i\int^{\infty}_s e^{i(s-t)\Delta}v_j(t)dt \quad (s\g 0),\\ -i\int_{-\infty}^s e^{i(s-t)\Delta}v_j(t)dt \quad (s< 0).\end{cases}
\]
The function $w=w_1+w_2$ satisfies $ \| w \|_{Z} \les 1$ and the proof of \eqref{eqn:lem energy ineq:sharp} is complete.

Finally, by Cauchy-Schwarz, it remains to prove that
\[\lambda   \| C_{\gtrsim \lambda^2 }P_{\lambda} w \|_{L^2} \lesa \| w \|_{Z}, \]
which follows from the following two inequalities: Firstly, using Bernstein's inequality,
\begin{align*}
  \| C_{\gtrsim \lambda^2 } P_{\lambda} w_1 \|_{L^2} \lesa{}& \sum_{\mu \gtrsim \lambda^2}   \| C_{\mu } w_1 \|_{L^2} \lesa \sum_{\mu \gtrsim \lambda^2} \mu^{\frac{1}{2}} \| P^{(t)}_{ \mu}(e^{-it\Delta} w_1) \|_{L^1_tL^2_x} \lesa \sum_{\mu \gtrsim \lambda^2} \mu^{-\frac{1}{2}} \| \partial_t P^{(t)}_{ \mu}(e^{-it\Delta} w_1) \|_{L^1_tL^2_x} \\\lesa{}& \lambda^{-1}\| (i\partial_t+\Delta) w_1 \|_{L^1_tL^2_x}.
\end{align*}
Secondly, by a similar argument,
\begin{align*}
  \| C_{\gtrsim \lambda^2 } P_{\lambda} w_2 \|_{L^2}  \lesa{}& \lambda^{-2}\| (i\partial_t+\Delta) P_{\lambda} w_2 \|_{L^2}\lesa \lambda^{-1}\| (i\partial_t+\Delta) w_2 \|_{L^2_tL^{2_*}_x},
\end{align*}
which completes the proof of \eqref{eqn:lem energy ineq:sharp-highmod}.
\end{proof}
Clearly, in view of Lemma \ref{lem:strichartz control}, the inequality \eqref{eqn:lem energy ineq:energy} can also be used to bound the weaker norm $S^{s}$ and in fact this suffices for the small data theory. However for the large data theory, we need a refinement to Besov norms in the resonant interactions (when $u$ has Fourier support close to the parabola) and this requires the sharper bound \eqref{eqn:lem energy ineq:sharp}.

\subsection{An elementary product estimate}

\begin{lemma}\label{lem:elem prod est}
There exists $C>0$ such that for any $s, \epsilon \g 0$ and any sequence of functions $(g^{(\mu)})_{\mu \in 2^\NN}$ with $\supp \widehat{g}^{(\mu)} \subset \{ |\xi| \approx \mu\}$ we have
        $$ \Big( \sum_{\lambda \in 2^\NN} \lambda^{2(s-1)} \Big\| \sum_{\mu \gg \lambda} P_\lambda( f g^{(\mu)}) \Big\|_{L^2_x}^2 \Big)^\frac{1}{2} \les C \| \lr{\nabla}^{-\epsilon} f \|_{L^\frac{d}{2}_x} \Big\| \Big(\sum_{\mu \in 2^\NN} \mu^{2(s+\epsilon)}  |g^{(\mu)}|^2 \Big)^\frac{1}{2} \Big\|_{L^{2^*}_x}. $$
\end{lemma}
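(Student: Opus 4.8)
The plan is to reduce everything to a single Littlewood-Paley block in the high frequency $\mu$, then exploit the frequency constraint $\mu \gg \lambda$ together with Hölder and Bernstein. First I would fix $\lambda$ and note that $P_\lambda(f g^{(\mu)})$ is nonzero only when the output frequency $\approx \lambda$ can be produced from a frequency $\approx \mu$ of $g^{(\mu)}$ and a frequency of $f$; since $\mu \gg \lambda$, this forces $f$ to have frequency $\approx \mu$ as well (i.e.\ $|\xi_f| \sim \mu$). So I replace $f$ by $\ti P_\mu f$ for a fattened projector $\ti P_\mu$, schematically
\[
 P_\lambda(f g^{(\mu)}) = P_\lambda\big( (\ti P_\mu f)\, g^{(\mu)}\big).
\]
Then by Hölder in $x$ with exponents $\tfrac d2$ and $2^*$ (note $\tfrac 2d + \tfrac1{2^*} = \tfrac 2d + \tfrac{d-2}{2d} = \tfrac12$, so this lands in $L^2_x$) and the boundedness of $P_\lambda$ on $L^2_x$,
\[
 \Big\| \sum_{\mu \gg \lambda} P_\lambda( f g^{(\mu)}) \Big\|_{L^2_x}
 \lesa \sum_{\mu \gg \lambda} \| \ti P_\mu f \|_{L^{d/2}_x}\, \| g^{(\mu)}\|_{L^{2^*}_x}.
\]

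Next I would insert the weights. Write $\| \ti P_\mu f\|_{L^{d/2}_x} \approx \mu^{\epsilon} \| \ti P_\mu \lr{\nabla}^{-\epsilon} f\|_{L^{d/2}_x}$ using Bernstein/the Mikhlin multiplier theorem (valid since $\ti P_\mu$ localizes to $|\xi| \approx \mu$), and pull out $\mu^{s+\epsilon}$ against $g^{(\mu)}$; the remaining power on the $f$-factor is $\mu^{s-1} \cdot \mu^{-(s+\epsilon)} \cdot \mu^{\epsilon} = \mu^{-1}$, matching the $\lambda^{s-1}$ on the left after using $\lambda \lesa \mu$. Concretely, after multiplying by $\lambda^{s-1} \lesa \mu^{s-1}$ we get, for each $\lambda$,
\[
 \lambda^{s-1}\Big\| \sum_{\mu \gg \lambda} P_\lambda( f g^{(\mu)}) \Big\|_{L^2_x}
 \lesa \sum_{\mu \gg \lambda} \Big(\frac{\lambda}{\mu}\Big)^{s-1} \mu^{-1}\, \| \ti P_\mu \lr{\nabla}^{-\epsilon} f \|_{L^{d/2}_x}\; \mu^{s+\epsilon}\| g^{(\mu)}\|_{L^{2^*}_x}.
\]

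Then I would take the $\ell^2_\lambda$ norm. The coefficient $(\lambda/\mu)^{s-1} \mu^{-1}$ summed against an $\ell^2$ sequence in $\mu$ should be handled by Schur's test: treating it as a matrix $K_{\lambda\mu}$ on $2^\NN \times 2^\NN$ supported on $\mu \gg \lambda$, one checks $\sum_\mu K_{\lambda\mu} \lesa 1$ (geometric series in $\mu/\lambda$ when $s>1$; when $s \le 1$ the extra factor $(\mu/\lambda)^{1-s} \le (\mu/\lambda)$ is still beaten by $\mu^{-1}$... ) — here a little care is needed, so instead I would keep a factor $\mu^{-1/2}$ with the kernel and absorb the other $\mu^{-1/2}$ into a rescaling, or more cleanly just note $\lambda^{s-1} \mu^{-1} = \lambda^{s-1}\mu^{-(s-1)} \cdot \mu^{-s}$ only works for $s \ge $ something; the robust route is Schur with weights $a_\lambda = \lambda^{-1/2}$, $b_\mu = \mu^{-1/2}$ verifying $\sum_\mu K_{\lambda\mu} b_\mu \lesa a_\lambda$ and $\sum_\lambda K_{\lambda\mu} a_\lambda \lesa b_\mu$, which reduces to elementary geometric summation once one observes the kernel decays like $(\lambda/\mu)^{|s-1|}$ up to the universal gain $\mu^{-1}$ vs $\lambda^{-1}$; since $\mu \gg \lambda$ both sums converge. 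This yields
\[
 \Big( \sum_{\lambda} \lambda^{2(s-1)} \Big\| \sum_{\mu\gg\lambda} P_\lambda(f g^{(\mu)})\Big\|_{L^2_x}^2\Big)^{1/2}
 \lesa \Big( \sum_\mu \| \ti P_\mu \lr{\nabla}^{-\epsilon} f\|_{L^{d/2}_x}^2 \Big)^{1/2} \sup_\mu \big( \mu^{s+\epsilon}\| g^{(\mu)}\|_{L^{2^*}_x}\big)?
\]
No — the $g$ factor must be kept inside the sum, so the correct bookkeeping keeps $\mu^{s+\epsilon} g^{(\mu)}$ as the $\ell^2_\mu$ sequence and the $f$ factor controlled in $\ell^2_\mu$ by the square-function/Littlewood-Paley inequality $\big(\sum_\mu \|\ti P_\mu h\|_{L^{d/2}_x}^2\big)^{1/2} \lesa \| h\|_{L^{d/2}_x}$ (valid for $1 < d/2 < \infty$, i.e.\ $d \ge 3$), together with the reverse square-function inequality $\big\|(\sum_\mu \mu^{2(s+\epsilon)}|g^{(\mu)}|^2)^{1/2}\big\|_{L^{2^*}_x} \gec \big(\sum_\mu \mu^{2(s+\epsilon)}\|g^{(\mu)}\|_{L^{2^*}_x}^2\big)^{1/2}$ — wait, that inequality goes the wrong way for $2^* > 2$.

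The main obstacle is exactly this last point: one cannot naively pull the $L^{2^*}_x$ norm inside the $\ell^2_\mu$ sum. The resolution is to not separate the $x$-integration from the $\mu$-summation prematurely: instead, before applying Hölder in $x$, apply Cauchy–Schwarz in $\mu$ pointwise in $x$ against the Schur weights, turning $\sum_\mu |\ti P_\mu f(x)|\,|g^{(\mu)}(x)| K_{\lambda\mu}$ into $(\sum_\mu |\ti P_\mu f|^2 w_\mu)^{1/2}(\sum_\mu |g^{(\mu)}|^2 \mu^{2(s+\epsilon)} w_\mu')^{1/2}$ with weights arranged so the $\lambda$-sum of squares of the surviving coefficient is $O(1)$; then Hölder in $x$ and the two (forward) square-function inequalities for $f \in L^{d/2}_x$ and the given $g$-square-function in $L^{2^*}_x$ close the estimate. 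I expect verifying the Schur/weight arithmetic so that it works for the full range $s, \epsilon \ge 0$ (including the borderline $s=1$) to be the only genuinely delicate bookkeeping; everything else is Bernstein, Hölder, and Littlewood–Paley theory.
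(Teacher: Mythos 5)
Your proposal assembles the right local ingredients (the observation that $\mu \gg \lambda$ forces $f$ to frequency $\approx \mu$, pointwise Cauchy--Schwarz in $\mu$, H\"older with exponents $\tfrac d2$ and $2^*$, and the square-function bound for $\lr{\nabla}^{-\epsilon}f$ in $L^{d/2}$), but there are two problems, one arithmetic and one structural. The arithmetic one: $\tfrac 2d + \tfrac{1}{2^*} = \tfrac{d+2}{2d} = \tfrac{1}{2_*}$, not $\tfrac12$, so H\"older lands the product in $L^{2_*}_x$, not $L^2_x$. To reach $L^2_x$ at output frequency $\lambda$ you need Bernstein, which costs a factor $\lambda$; this is exactly why the paper works with $\lambda^{s}$ and $L^{2_*}_x$ rather than $\lambda^{s-1}$ and $L^2_x$.

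The structural gap is the $\lambda$-summation, which your Schur-weight scheme cannot close. In any splitting of the weights between the $f$- and $g$-square functions, the $f$-weight must be $\lesa \mu^{-2\epsilon}$ uniformly (else the $f$-square function does not close), which forces the $g$-weight at $(\lambda,\mu)$ to be $\gtrsim (\lambda/\mu)^{2s}\,\mu^{2(s+\epsilon)}$; at the bottom of the range $\mu \approx C\lambda$ this is a fixed constant times $\mu^{2(s+\epsilon)}$, so the per-$\lambda$ bound on the $g$-factor is always $\gtrsim \big\| \big(\sum_\mu \mu^{2(s+\epsilon)}|g^{(\mu)}|^2\big)^{1/2}\big\|_{L^{2^*}}$ and $\sum_\lambda$ of its square diverges (this is already visible at $s=\epsilon=0$). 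Nor can you interchange the outer $\ell^2_\lambda$ with the $L^{2^*}_x$ norm, since Minkowski goes the wrong way for $2^*>2$ --- the same obstruction you correctly identified for the $\mu$-sum reappears for the $\lambda$-sum, and "weight arithmetic" cannot remove it because there is no gain when $\mu\approx\lambda$. The missing idea is the paper's first step: the Triebel--Lizorkin/Jawerth--Franke embedding $F^0_{2_*,\infty}\subset H^{-1}$, which converts the left-hand side into $\big\|\sup_\lambda \lambda^s\big|\sum_{\mu\gg\lambda}P_\lambda(fg^{(\mu)})\big|\big\|_{L^{2_*}_x}$, i.e.\ it moves the $\lambda$-summation \emph{inside} the spatial norm as an $\ell^\infty_\lambda$. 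The $\sup_\lambda$ is then dominated, uniformly in $\lambda$, by the Hardy--Littlewood maximal function of the pointwise product of the two square functions (using that the kernels $\rho(\lambda y)\lambda^d$ of $P_\lambda$ form an approximate identity), after which your Cauchy--Schwarz/H\"older/square-function steps do close the estimate. Without this embedding (or an equivalent device) the argument as proposed does not go through.
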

\begin{proof}
An application of the continuous embedding $F^0_{2_*, \infty} \subset H^{-1}$ implies that
         $$ \Big( \sum_{\lambda \in 2^\NN} \lambda^{2(s-1)} \Big\| \sum_{\mu \gg \lambda} P_\lambda( f g^{(\mu)}) \Big\|_{L^2_x}^2 \Big)^\frac{1}{2} \lesa \Big\| \sup_{\lambda \in 2^\NN} \lambda^s \Big| \sum_{\mu \gg \lambda} P_\lambda( f g^{(\mu)})\Big| \Big\|_{L^{2_*}_x}. $$
Let $\rho(\lambda y) \lambda^d$ denote the kernel of $P_\lambda$. Then
for any $\lambda \in 2^\NN$ we have the pointwise a.e.\ estimate
        \begin{align*}
            \lambda^s \Big| \sum_{\mu \gg \lambda} P_\lambda( f g^{(\mu)})(x)\Big|
                        &\lesa \lambda^s \int_{\RR} |\rho(\lambda y)| \lambda^d \sum_{\mu \gg \lambda} |f_{\approx \mu}(x-y)| |g^{(\mu)}(x-y)| dy \\
                        &\lesa  \int_{\RR} |\rho(\lambda y)| \lambda^d \Big(\sum_{\mu} \mu^{ - 2 \epsilon} |f_{\mu}(x-y)|^2\Big)^\frac{1}{2} \Big( \sum_{\mu} \mu^{2(s+\epsilon)} |g^{(\mu)}(x-y)|^2\Big)^\frac{1}{2} dy \\
                        &\lesa \mc{M}\Big[ \Big(\sum_{\mu} \mu^{ - 2 \epsilon} |f_{\mu}|^2\Big)^\frac{1}{2} \Big( \sum_{\mu} \mu^{2(s-\epsilon)} |g^{(\mu)}|^2\Big)^\frac{1}{2} \Big](x)
        \end{align*}
where $\mc{M}$ denotes the Hardy-Littlewood maximal function. Therefore, by the $L^{2_*}_x$ boundedness of the maximal function, the standard square function estimate, and H\"older's inequality  we obtain
        \begin{align*}
         \Big( \sum_{\lambda \in 2^\NN} \lambda^{2(s-1)} \Big\| \sum_{\mu \gg \lambda} P_\lambda( f g^{(\mu)}) \Big\|_{L^2_x}^2 \Big)^\frac{1}{2}
                    &\lesa \Big\| \mc{M}\Big[ \Big(\sum_{\mu} \mu^{ - 2 \epsilon} |f_{\mu}|^2\Big)^\frac{1}{2} \Big( \sum_{\mu} \mu^{2(s+\epsilon)} |g^{(\mu)}|^2\Big)^\frac{1}{2} \Big] \Big\|_{L^{2_*}_x} \\
                    &\lesa \Big\| \Big(\sum_{\mu} \mu^{ - 2 \epsilon} |f_{\mu}|^2\Big)^\frac{1}{2} \Big\|_{L^{\frac{d}{2}}_x} \Big\|\Big( \sum_{\mu} \mu^{2(s+\epsilon)} |g^{(\mu)}|^2\Big)^\frac{1}{2} \Big\|_{L^{2^*}_x} \\&\approx \|\lr{\nabla}^{-\epsilon} f \|_{L^{\frac{d}{2}}_x} \Big\| \Big(\sum_{\mu \in 2^\NN} \mu^{2(s+\epsilon)}  |g^{(\mu)}|^2 \Big)^\frac{1}{2} \Big\|_{L^{2^*}_x}.
        \end{align*}
\end{proof}

\section{Bilinear estimates}\label{sec:bilest}
In this section we give the basic bilinear estimate that we require to control the Schr\"odinger nonlinearity. Although this bound is not strong enough to obtain the uniform Strichartz estimate that is needed in the proof of Theorem \ref{thm:lwp below W}, it has the advantage that it can be used, together with the energy inequality Lemma \ref{lem:energy ineq}, to easily upgrade a solution $u \in S^s$, to $u \in \underline{S}^s$. The stronger control provided by the space $\underline{S}^s$ is crucial in obtaining the Besov refinement that we require in later sections.

\begin{theorem}\label{thm:main schro est nonres}
Let $d\g 4$, $0\les s < 1$, and $0\les \epsilon < 1-s$. Then
    \EQ{ \label{V Sw2N}
  \|  v  u \|_{N^{s}} \lesa \Big(\| \lr{\nabla}^{-\epsilon} v \|_{L^\infty_t L^{\frac{d}{2}}_x} + \| (i\p_t \pm |\nabla|) v \|_{L^2_t H^{\frac{d}{2}-3-\epsilon}_x}\Big) \|u \|_{S^{s+\epsilon}}. }
\end{theorem}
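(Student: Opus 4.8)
The plan is to decompose the product $vu$ into Littlewood--Paley pieces and distinguish the standard three frequency-interaction regimes: low-high ($v$ at frequency $\mu \ll \lambda$, $u$ at frequency $\lambda$), high-low ($v$ at $\mu \gg \lambda$), and high-high ($v$ and $u$ at comparable frequencies $\gtrsim \lambda$). For each dyadic output frequency $\lambda$ we must estimate $\|P_\lambda(vu)\|_{N^s_\lambda}$, which by definition splits into a low-modulation piece measured in $L^2_t L^{2_*}_x$ (with weight $\lambda^s$) and a full piece measured in $L^2_{t,x}$ (with weight $\lambda^{s-1}$). The $L^2_{t,x}$ piece is the easier one: by H\"older in space, $\|P_\lambda(vu)\|_{L^2_{t,x}} \lesssim \|v\|_{L^\infty_t L^{d/2}_x}\|u\|_{L^2_t L^{2^*}_x}$ in the relevant regimes, and after inserting the $\lr{\nabla}^{-\epsilon}$ smoothing on $v$ and paying $\lambda^{-\epsilon}$ (resp.\ $\mu^{-\epsilon}$) this is summable in $\lambda$ against $\|u\|_{S^{s+\epsilon}}$. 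I expect the low-high and high-high regimes to be handled by this direct H\"older argument combined with Lemma~\ref{lem:strichartz control} to convert $S^{s+\epsilon}$ control into $L^2_t L^{2^*}_x$ and $L^\infty_t L^2_x$ bounds, together with an $\ell^2$ Cauchy--Schwarz/Schur argument over the frequency gap.

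The high-low regime $\mu \gg \lambda$ is where the structure genuinely enters, and it is the main obstacle. Here a naive H\"older estimate loses too much, and the point is that $v$ is \emph{not} a free Schr\"odinger solution but (essentially) a free wave, so one must exploit the dispersive relation. The strategy is: write $v = v_{\mathrm{free}} + (\text{Duhamel term})$ where $v_{\mathrm{free}}$ solves $(i\p_t \pm |\nabla|)v = 0$ and the remainder is controlled by $\|(i\p_t\pm|\nabla|)v\|_{L^2_t H^{d/2-3-\epsilon}_x}$. For the Duhamel/forcing part one can afford to be crude since the negative-regularity norm $H^{d/2-3-\epsilon}$ gives a strong derivative gain at the high frequency $\mu$; roughly, one trades the three extra derivatives against Bernstein and the $\mu^{-3}$-type gain, landing back in $L^2_{t,x}$ or $L^2_t L^{2_*}_x$ and summing. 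For the free-wave part one must use Lemma~\ref{lem:elem prod est}: applying it with the sequence $g^{(\mu)} = P_\mu v$ (which has $\widehat{g}^{(\mu)}$ supported in $\{|\xi|\approx\mu\}$) and $f = u$ handles precisely the sum $\sum_{\mu\gg\lambda}P_\lambda(u\,P_\mu v)$, producing the factor $\|\lr{\nabla}^{-\epsilon} u\|_{L^{d/2}_x}\,\|(\sum_\mu \mu^{2(s+\epsilon)}|P_\mu v|^2)^{1/2}\|_{L^{2^*}_x}$ — but this has $u$ and $v$ in the "wrong" slots. So instead I would apply Lemma~\ref{lem:elem prod est} with the roles swapped: $g^{(\mu)} = P_\mu v$, $f = u$, giving the $L^{d/2}_x$ norm on $u$ and an $L^{2^*}$ square-function norm on $v$ — and then use the wave Strichartz/energy estimate to bound $\|(\sum_\mu \mu^{2(s+\epsilon)}|P_\mu v|^2)^{1/2}\|_{L^2_t L^{2^*}_x}$ by the $L^\infty_t L^{d/2}_x$ and $L^2_t H^{d/2-3-\epsilon}_x$ norms of $v$ appearing on the right-hand side. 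Wait — more carefully: since $v$ carries only $L^\infty_t L^{d/2}_x$-type control and not Strichartz control, the clean route is to put $v$ in $L^{d/2}_x$ and $u$ in $L^{2^*}_x$; thus I apply Lemma~\ref{lem:elem prod est} verbatim with $f=v$, $g^{(\mu)}=P_\mu u$, obtaining $\lesssim \|\lr{\nabla}^{-\epsilon} v\|_{L^\infty_t L^{d/2}_x}\|(\sum_\mu\mu^{2(s+\epsilon)}|P_\mu u|^2)^{1/2}\|_{L^2_t L^{2^*}_x} \lesssim \|\lr{\nabla}^{-\epsilon}v\|_{L^\infty_t L^{d/2}_x}\|u\|_{S^{s+\epsilon}}$, where the last step is the $\ell^2$-square-function characterisation of the Strichartz norm. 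This is exactly the high-low contribution to the $\lambda^{s-1}\|\cdot\|_{L^2_{t,x}}$-type piece; the low-modulation $L^2_t L^{2_*}_x$ piece in $N^s_\lambda$ is handled similarly but one gains from the fact that the modulation localisation $C_{\les(\lambda/2^8)^2}$ forces a non-resonance, since a high-frequency wave times a low-frequency Schr\"odinger wave is far from the paraboloid — so that piece either vanishes or is absorbed into the $(i\p_t\pm|\nabla|)v$ term.

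The remaining bookkeeping is the $\ell^2_\lambda$ summation: in the low-high and high-high regimes the frequency separation yields geometric factors $(\lambda/\mu)^\delta$ or $(\mu/\lambda)^\delta$ for some $\delta>0$ coming from the constraints $0\le\epsilon<1-s$ (which give room in the Bernstein exponents), so Schur's test closes the sum; in the high-low regime Lemma~\ref{lem:elem prod est} already delivered an $\ell^2_\lambda$-summed output directly, so no further summation is needed there. Assembling the three regimes and the two parts of the $N^s$ norm gives \eqref{V Sw2N}. I expect the only delicate point to be verifying that the modulation cutoff in $N^s_\lambda$ genuinely produces the non-resonance gain in the high-low interaction — i.e.\ that on the support of $C_{\les(\lambda/2^8)^2}P_\lambda(\cdot)$ with one input a wave at frequency $\mu\gg\lambda$, the space-time Fourier variable $\tau+|\xi|^2$ (or $\tau-|\xi|^2$) cannot be small — which is a direct computation with the dispersion relations $\tau = \mp|\xi_1| + (\tau-(\mp|\xi_1|))$ and $|\xi|\approx\lambda$, $|\xi_1|\approx\mu$.
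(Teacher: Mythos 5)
Your handling of the genuinely critical interaction (both inputs at high frequency $\approx\mu$, output at low frequency) does, after your self-correction, land on the paper's key device: Lemma~\ref{lem:elem prod est} applied with $f=v$, $g^{(\mu)}=P_\mu u$, followed by Minkowski to reach $\|u\|_{S^{s+\epsilon}}$. However there are two genuine gaps. First, your splitting $v=v_{\mathrm{free}}+\text{Duhamel}$ to absorb $\|(i\p_t\pm|\nabla|)v\|_{L^2_tH^{\frac d2-3-\epsilon}}$ does not work: the wave group is not bounded on $L^{d/2}_x$-based spaces, so the free part is not controlled by $\|\lr{\nabla}^{-\epsilon}v\|_{L^\infty_tL^{d/2}_x}$ (the hypothesis bounds $v$ itself, not its constituents), and the wave Duhamel term of an $L^2_tH^{\frac d2-3-\epsilon}$ forcing admits no global-in-time bound in any norm you could feed back into the product estimate (an energy estimate needs $L^1_t$ forcing, and $(2,2)$ is not a dual wave-admissible pair), so your "trade three derivatives against Bernstein" step has nowhere to land. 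What is actually needed, and what the paper does, is a splitting of $v$ by temporal frequency relative to $|\xi|^2$: where $|\tau|\gtrsim\delta|\xi|^2$ the symbol satisfies $|\tau\mp|\xi||\gtrsim\lr{\xi}^2$, so the wave-operator norm upgrades to $\|v\|_{L^2_tH^{\frac d2-1-\epsilon}}$, for which a separate and simpler H\"older/Bernstein estimate closes; the remaining part of $v$ carries only the support condition $|\tau|\lesssim\delta|\xi|^2+O(1)$, and it is this support condition (not free-wave structure) that the rest of the proof uses.

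Second, the modulation analysis is incomplete or misattributed in the two places where it is indispensable. (a) In the output-low regime, Lemma~\ref{lem:elem prod est} only controls the $\lambda^{s-1}L^2_{t,x}$ component of $N^s_\lambda$; to conclude, one must split $u_\mu=\PN{\mu}u+\PF{\mu}u$. For $\PN{\mu}u$ the support condition on $v$ forces the output temporal frequency to be $\approx\mu^2\gg\lambda^2$, so the low-modulation component of the output vanishes identically and the lemma (applied with $g^{(\mu)}=\PN{\mu}u$ measured in $L^2_tL^{2^*}_x$) suffices; for $\PF{\mu}u$ the low-modulation output does \emph{not} vanish and must be estimated separately via H\"older/Bernstein through $\|(i\p_t+\Delta)u_\mu\|_{L^2_{t,x}}$, which is what produces the summable $(\lambda/\mu)$-gain. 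Your claim that this piece "either vanishes or is absorbed into the $(i\p_t\pm|\nabla|)v$ term" misidentifies the mechanism (it is the modulation of $u$, not the wave error of $v$, that controls the surviving part), and your frequency description ("high-frequency wave times a low-frequency Schr\"odinger wave") is the wrong configuration, since $u$ sits at the high frequency $\approx\mu$ here. (b) The regime with $v$ at high frequency and $u$ at low frequency (output comparable to $v$) is not "direct H\"older plus Schur": for the component $\lambda_0^s\|C_{\les(\lambda_0/2^8)^2}(\cdot)\|_{L^2_tL^{2_*}_x}$, any H\"older/Bernstein pairing of $\lr{\nabla}^{-\epsilon}v\in L^\infty_tL^{d/2}_x$ against the Strichartz norms of $u_{\lambda_1}$ yields a factor $(\lambda_0/\lambda_1)^{s+\epsilon}\g 1$ (a gain would require $v\in L^p_x$ with $p<d/2$, which is unavailable), so the dyadic sum diverges whenever $s+\epsilon>0$. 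Here too the temporal support of $v$ must be exploited to force the $u$-factor into modulation $\approx\lambda_0^2$, so that $\|(i\p_t+\Delta)u_{\lambda_1}\|_{L^2_{t,x}}$ supplies a $\lambda_0^{-2}$ gain; this is exactly the paper's estimate \eqref{eqn:thm multilinear est gen:main high-low}, and without it your argument does not close in that regime.
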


Theorem \ref{thm:main schro est nonres} suffices to prove small data
global well-posedness in $H^s$ and (at least) the case $\epsilon=0$ can
be seen as a special case of \cite[Theorem 3.1]{Candy2023}. On the other hand, to deal with general data below the ground state, we need a version of Theorem \ref{thm:main schro est nonres} with a refinement to Besov norms. This is a much more delicate problem, which requires the use of bilinear restriction estimates to exploit the transversality occurring in the (time) resonant interactions.

\begin{proof}[Proof of Theorem \ref{thm:main schro est nonres}] Let $d\g 4$, $0\les s < 1$, and $0\les \epsilon < 1-s$. We start by proving that
        \begin{equation}\label{eqn:thm main schro est nonres:v large temp freq}
            \| v u \|_{N^s} = \Big( \sum_{\lambda_0 \in 2^\NN} \lambda_0^{2s} \| P_{\lambda_0}(vu) \|_{N^s_{\lambda_0}}^2 \Big)^\frac{1}{2} \lesa \| v \|_{L^2_t H^{\frac{d}{2}-1-\epsilon}_x} \| u \|_{S^{s+\epsilon}}.
        \end{equation}
Decompose the product into the interactions
        \begin{equation}\label{eqn:thm main schro est nonres:high-low decomp}
            P_{\lambda_0}(vu) = P_{\lambda_0}(v u_{\ll \lambda_0})+P_{\lambda_0}(v u_{\gg \lambda_0}) + P_{\lambda_0}(v u_{\approx \lambda_0}).
        \end{equation}

For the first term in \eqref{eqn:thm main schro est nonres:high-low decomp}, we have for any $\lambda_1 \ll \lambda_0$
            \begin{align*}
              \| P_{\lambda_0}( v u_{\lambda_1} ) \|_{N^s_{\lambda_0}} \lesa \lambda_0^s \| v_{\approx \lambda_0} u_{\lambda_1} \|_{L^2_t L^{2_*}_x} \lesa \Big( \frac{\lambda_1}{\lambda_0}\Big)^{ \frac{d}{2} -1 - s - \epsilon} \| v \|_{L^2_t H^{\frac{d}{2}-1 - \epsilon}_x} \lambda_1^{s+\epsilon} \| u_{\lambda_1} \|_{L^\infty_t L^2_x}
            \end{align*}
        and so summing up over $\lambda_0 \gg \lambda_1$ gives \eqref{eqn:thm main schro est nonres:v large temp freq}.
For the second term, we observe that an application of Bernstein's inequality and \eqref{eqn:thm main schro est nonres:N embedding} gives for any $\lambda_1 \gg \lambda_0$
    \begin{align*}
       \| P_{\lambda_0}(vu_{\lambda_1}) \|_{N^s_{\lambda_0}} \lesa \lambda_0^s  \| P_{\lambda_0}( v_{\approx \lambda_1} u_{\lambda_1} ) \|_{L^2_t L^{2_*}_x} &\lesa \lambda_0^{s+\frac{d}{2}-1} \| v_{\approx \lambda_1} \|_{L^2_{t,x}} \| u_{\lambda_1} \|_{L^\infty_t L^2_x} \\
       &\lesa \Big( \frac{\lambda_0}{\lambda_1} \Big)^{s+\frac{d}{2} - 1} \| v \|_{L^2_t H^{\frac{d}{2}-1-\epsilon}_x} \| u_{\lambda_1} \|_{S^{s+\epsilon}_{\lambda_1}}.
    \end{align*}
Clearly, provided that $s + \frac{d}{2}-1 > 0$, this can be summed up over frequencies to give \eqref{eqn:thm main schro est nonres:v large temp freq} for the second term in \eqref{eqn:thm main schro est nonres:high-low decomp}.
To bound the third term in \eqref{eqn:thm main schro est nonres:high-low decomp}, we note that
    \begin{align*}
      \| P_{\lambda_0}(v u_{\approx \lambda_0} ) \|_{N^s_{\lambda_0}} \lesa \lambda_0^s \| v_{\lesa \lambda_0}  u_{\approx \lambda_0} \|_{L^2_t L^{2_*}_x} \lesa \| v_{\lesa \lambda_0} \|_{L^2_t L^d_x} \lambda_0^s \| u_{\approx \lambda_0} \|_{L^\infty_t L^2_x} \lesa \| v \|_{L^2_t H^{\frac{d}{2}-1-\epsilon}_x} \lambda_0^{s+\epsilon} \| u_{\approx \lambda_0} \|_{L^\infty_t L^2_x}.
    \end{align*}
Again, this can be summed up to give \eqref{eqn:thm main schro est nonres:v large temp freq}.  Therefore, for any fixed $0<\de\ll 1$, if $\supp \widetilde{v} \subset \{ |\tau| \ge \de|\xi|^2+4\}$, the required bound \eqref{V Sw2N} follows from the inequality
        $$ \| P^{(t)}_{\ge \de\lambda_0^2/2+2} v_{\lambda_0} \|_{L^2_t H^{\frac{d}{2}-1-\epsilon}_x} \lesa \| (i\p_t \pm |\nabla|) v_{\lambda_0} \|_{L^2_t H^{\frac{d}{2}-3-\epsilon}_x}. $$
It only remains to prove that, provided $v \in L^\infty_t L^{\frac{d}{2}}_x$ with $\supp \widetilde{v} \subset \{ |\tau| \le 2\de|\xi|^2+8\}$, we have
    \begin{equation}\label{eqn:thm main schro est nonres:goal}
          \Big( \sum_{\lambda_0 \in 2^\NN}  \big\| P_{\lambda_0}\big(v u\big) \big\|_{N^s_{\lambda_0}}^2 \Big)^\frac{1}{2} \lesa \| \lr{\nabla}^{-\epsilon} v \|_{L^\infty_t L^\frac{d}{2}_x} \| u \|_{S^{s+\epsilon}}.
         \end{equation}
Again we decompose the product
     $$ P_{\lambda_0}(uv) = P_{\lambda_0}( v u_{\ll \lambda_0}) + P_{\lambda_0} ( v u_{\gg \lambda_0}) + P_{\lambda_0}( v u_{\approx \lambda_0})=:I_1+I_2+I_3$$
and consider each interaction separately.

\textbf{Contribution of $I_1$.} We in fact show that for any $ 2\les r \les d $ and $\lambda_0 \gg \lambda_1$ we have the stronger estimate
    \begin{equation}\label{eqn:thm multilinear est gen:main high-low}
        \big\| P_{\lambda_0} \big( v u_{\lambda_1} \big) \big\|_{N^{s}_{\lambda_0}} \lesa
            \Big( \frac{\lambda_1}{\lambda_0}\Big)^{\frac{d}{r}-s-1-\epsilon} \| v \|_{L^\infty_t B^{\frac{d}{r}-2-\epsilon}_{r, \infty}} \| u_{\lambda_1} \|_{S^{s+\epsilon}_{\lambda_1}}.
    \end{equation}
This implies \eqref{eqn:thm main schro est nonres:goal} in the case $\lambda_0 \gg \lambda_1$ in view of our assumptions on $s$ and $\epsilon$. We start by dealing with the low modulation output. The Fourier support assumption on $v$ implies that
        $$  \PN{\lambda_0}( v u_{\lambda_1}) = \PN{\lambda_0}( v_{\approx \lambda_0} P^{(t)}_{\approx \lambda_0^2} \PF{\la_1} u).$$
Hence for any $2 \les r \les d$, the disposability of the multiplier $\PN{\lambda_0}$, and Bernstein's inequality, gives
        \begin{align*}
           \lambda_0^s \big\| \PN{\lambda_0}( v u_{\lambda_1}) \big\|_{L^2_t L^{2_*}_x}
                        &\lesa \lambda_0^s \lambda_1^{\frac{d}{r}-1}  \| v_{\approx \lambda_0} \|_{L^\infty_t L^r_x} \| P^{(t)}_{\approx \lambda_0^2} \PF{\la_1} u \|_{L^2_{t,x}} \\
                        &\lesa  \lambda_0^{s-2} \lambda_1^{\frac{d}{r}-1}  \| v_{\approx \lambda_0} \|_{L^\infty_t L^r_x} \| (i\p_t + \Delta) \PF{\la_1} u \|_{L^2_{t,x}},\\
                        &\lesa \Big( \frac{\lambda_1}{\lambda_0}\Big)^{\frac{d}{r}-1-s-\epsilon} \|  v \|_{L^\infty_t B^{\frac{d}{r}-2-\epsilon}_{r,\infty}}  \| u_{\lambda_1} \|_{S^{s+\epsilon}_{\lambda_1}}
        \end{align*}
thus \eqref{eqn:thm multilinear est gen:main high-low} holds. To bound the high modulation output, we observe that
    \begin{align*}
      \lambda_0^{s-1}  \| \PF{\lambda_0}( v u_{\lambda_1} ) \|_{L^2_{t,x}} &\lesa \lambda_0^{s-1} \| v_{\approx \lambda_0} \|_{L^\infty L^r_x}  \lambda_1^{\frac{d}{r} -1} \| u_{\lambda_1} \|_{L^2_t L^{2^*}_x}
      \lesa \Big( \frac{\lambda_1}{\lambda_0} \Big)^{\frac{d}{r} - 1 - s-  \epsilon} \| v \|_{L^\infty_t B^{\frac{d}{r}-2-\epsilon}_{r,\infty}} \| u_{\lambda_1} \|_{S^{s+\epsilon}_{\lambda_1}}
    \end{align*}
and so  \eqref{eqn:thm multilinear est gen:main high-low} follows.

\textbf{Contribution of $I_2$.} We start by observing that for the part of $u$ which is close to the paraboloid, the non-resonant identity
    $$P_{\lambda_0}( v P^N_{\gg \lambda_0} u ) = P^F_{\lambda_0}( v P^N_{\gg \lambda_0} u)$$
together with Lemma \ref{lem:elem prod est} implies that for any $s, \epsilon \g 0$ we have
    \begin{align*}
        \Big( \sum_{\lambda_0 \in 2^\NN} \lambda_0^{2s} \| P_{\lambda_0}( v P^N_{\gg \lambda_0} u ) \|_{N^s_{\lambda_0}}^2 \Big)^\frac{1}{2}
                    &\lesa \Big( \sum_{\lambda_0 \in 2^\NN} \lambda_0^{2(s-1)} \| P_{\lambda_0}( v P^N_{\gg \lambda_0} u ) \|_{L^2_{t,x}}^2 \Big)^\frac{1}{2}  \\
                    &\lesa \| \lr{\nabla}^{-\epsilon} v \|_{L^\infty_t L^{\frac{d}{2}}_x} \Big( \sum_{\lambda_1 \in 2^\NN} \lambda_1^{2(s+\epsilon)} \| P^N_{\lambda_1} u \|_{L^2_t L^{2^*}_x}^2 \Big)^{\frac{1}{2}}\\
                    &\lesa \| \lr{\nabla}^{-\epsilon} v \|_{L^\infty_t L^{\frac{d}{2}}_x}  \| u \|_{S^{s+\epsilon}}.
    \end{align*}
    On the other hand, if $u_{\lambda_1}$ is supported away from the paraboloid, then applying \eqref{eqn:thm main schro est nonres:N embedding} we see that for any $2\les r \les d$
	\begin{align}
	\| P_{\lambda_0}( v \PF{\lambda_1}u) \|_{N^s_{\lambda_0}}&\lesa	\lambda^s_0\big\| P_{\lambda_0} (v_{\approx \lambda_1} \PF{\lambda_1} u)\big\|_{L^2_t L^{2_*}_x} \notag \\
			&\lesa \lambda_0^{s+\frac{d}{r}-1} \| v_{\approx \lambda_1}\|_{L^\infty_t L^r_x}  \| \PF{\la_1} u \|_{L^2_{t,x}}
			\lesa \Big( \frac{\lambda_0}{\lambda_1} \Big)^{ s + \frac{d}{r} -1} \| v \|_{L^\infty_t B^{\frac{d}{r}-2-\epsilon}_{r, \infty}} \| u_{\lambda_1} \|_{S^{s+\epsilon}_{\lambda_1}}.
            \label{eqn:thm multilinear est gen:main low-high with gain}
	\end{align}
Since there is a low-high frequency gain, summing up over frequencies gives \eqref{eqn:thm main schro est nonres:goal}.

\textbf{Contribution of $I_3$.} We now prove, without the Fourier support assumption on $v$, that for any $\lambda_0 \approx \lambda_1$ we have the resonant bound
        \begin{equation}\label{eqn:thm multilinear est gen:main high-high res}
            \lambda_0^s \| \PN{\lambda_0}( v \PN{\la_1} u ) \|_{L^2_t L^{2_*}_x} \lesa \| v_{\lesa \lambda_1} \|_{L^\infty_t L^{d/2}_x}  \| u_{\lambda_1} \|_{S^{s}_{\lambda_1}}
        \end{equation}
and the non-resonant estimates with a weaker norm of $v$
        \begin{equation}\label{eqn:thm multilinear est gen:main high-high nonres}
            \begin{split}
            \lambda_0^s \big\| \PN{\lambda_0}\big( v \PF{\la_1} u \big) \big\|_{L^2_t L^{2_*}_x} + \lambda_0^{s-1} \| \PF{\lambda_0}( v u_{\lambda_1} ) \|_{L^2_{t,x}} \lesa \lambda_1^{-1} \| v_{\lesa \lambda_1} \|_{L^\infty_t L^d_x}  \| u_{\lambda_1} \|_{S^{s}_{\lambda_1}}.
            \end{split}
        \end{equation}
Applying the definition of the norm $N^{s}_{\lambda_0}$ together with Bernstein's inequality, we see that \eqref{eqn:thm multilinear est gen:main high-high res} and \eqref{eqn:thm multilinear est gen:main high-high nonres} implies \eqref{eqn:thm main schro est nonres:goal} when $\lambda_0 \approx \lambda_1$. To prove the resonant bound \eqref{eqn:thm multilinear est gen:main high-high res}, we simply apply H\"older's inequality
        \begin{align*}
            \lambda_0^s \| \PN{\lambda_0} ( v \PN{\la_1} u ) \|_{L^2_t L^{2_*}_x}
                    &\lesa \lambda_0^s \| v \|_{L^\infty_t L^{d/2}_x}  \| \PN{\la_1} u \|_{L^2_t L^{2^*}_x} \lesa \| v \|_{L^\infty_t L^{d/2}_x} \| u_{\lambda_1} \|_{S^{s}_{\lambda_1}}.
        \end{align*}
On the other hand, to prove \eqref{eqn:thm multilinear est gen:main high-high nonres}, if the output has small modulation, then we again apply H\"older's inequality and observe that
        \begin{align*}
          \lambda_0^s \big\| \PN{\lambda_0}\big( v \PF{\la_1} u \big) \big\|_{L^2_t L^{2_*}_x}
                            &\lesa \lambda_0^s \| v \|_{L^\infty_t L^d_x} \| \PF{\la_1} u \|_{L^2_{t,x}}
                            \lesa \lambda_0^{-1} \|v \|_{L^\infty_t L^d_x} \| u_{\lambda_1} \|_{S^s_{\lambda_1}}.
        \end{align*}
If the output has large modulation, then we instead use
        \begin{align*}
          \lambda_0^{s-1} \big\| \PF{\lambda_0}\big( v u_{\lambda_1} \big) \big\|_{L^2_{t,x}}
                            &\lesa \lambda_0^{s-1} \| v \|_{L^\infty_t L^d_x} \| u_{\lambda_1} \|_{L^2_t L^{2^*}_x}
                            \lesa \lambda_0^{-1} \|v \|_{L^\infty_t L^d_x} \| u_{\lambda_1} \|_{S^s_{\lambda_1}}.
        \end{align*}
Therefore \eqref{eqn:thm multilinear est gen:main high-high res} and \eqref{eqn:thm multilinear est gen:main high-high nonres} follow.
\end{proof}

\section{Fourier restriction estimates}\label{sec:fre}

 Our eventual goal is to improve the result of Theorem \ref{thm:main
   schro est nonres} and include a weaker Besov norm on the right hand
 side. A close inspection of the proof of Theorem \ref{thm:main schro
   est nonres}, shows that we already have a Besov gain for all
 interactions except the resonant case of $I_2$. To obtain a
 suitable gain in this region, we require stronger estimates which
 exploit the fact that resonant interactions can only occur for
 transverse interactions. In particular, the new input is the use of bilinear restriction estimates to exploit the transversality between free waves and free Schr\"odinger solutions.

There are two bilinear estimates that we require. The first is an inhomogeneous version of the bilinear restriction estimate for wave-Schr\"odinger interactions. The key point is that we prove that the bilinear restriction type estimate holds not just for free solutions to the Schr\"odinger equation, but also inhomogeneous solutions satisfying $(i\p_t + \Delta) u  \in L^2_t L^{\frac{4}{3}}_x$. This bilinear restriction estimate is used to obtain a high-low frequency gain which is required to place $v(t) \in \dot{B}^{\frac{4}{r}-2}_{r, \infty}$. Without a high-low gain we can only place $v(t) \in \dot{B}^{\frac{4}{r}-2}_{r, 1}$, i.e. we would require a much stronger $\ell^1$ summation over the dyadic frequencies of $v$. Recall that we have defined
        $$ \| u \|_Z = \|u \|_{L^\infty_t L^2_x} + \| ( i\p_t + \Delta) u \|_{L^1_t L^2_x + L^2_t L^{2_*}_x}.$$
Clearly we have the inequalities
        $$ \|u \|_Z \les  \|u \|_{L^\infty_t L^2_x} + \| ( i\p_t + \Delta) u \|_{L^2_t L^{2_*}_x}$$
and
    $$\| P^N_\la u \|_{S^s_\lambda} \lesa \lambda^s \| P^N_\la u \|_Z \lesa \| u_\lambda \|_{\underline{S}^s_\lambda}.$$

\begin{theorem}[Bilinear $L^2_{t,x}$ for inhomogeneous wave-Schr\"odinger interactions]\label{thm:bilinear wave-schrodinger}
Let $0\les \alpha < \frac{1}{2}$. For all $\lambda_0, \lambda_1 \in 2^\NN$ and $ \mu \in 2^\ZZ$ with $\min\{\mu, \lambda_0\}\lesa \lambda_1$, and any free wave $v = e^{\pm i t |\nabla|} g $ we have
    \begin{equation}\label{eqn:thm bi wave-schro:low-high}
        \| P_{\les \lambda_0}( \dot{P}_{\mu} v u_{\lambda_1}) \|_{L^2_{t,x}(\RR^{1+4})} \lesa   \Big( \frac{\min\{\mu, \lambda_0\}}{\lambda_1}\Big)^{\alpha} \min\{\mu, \lambda_0\} \| \dot{P}_\mu v \|_{L^\infty_t L^2_x} \| u_{\lambda_1} \|_{Z}.
    \end{equation}
\end{theorem}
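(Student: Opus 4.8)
The plan is to reduce the claimed bilinear estimate to two known inputs: the bilinear $L^2_{t,x}$ restriction estimate for the product of a free wave and a \emph{free} Schr\"odinger solution (with the transversality-driven $\alpha$-gain in the high-low ratio), and a Duhamel/$TT^*$ argument to upgrade the free Schr\"odinger factor $u_{\lambda_1}$ to an inhomogeneous one with $(i\p_t+\Delta)u_{\lambda_1}$ in the dual endpoint space $L^2_t L^{2_*}_x + L^1_t L^2_x$, as encoded in the $Z$-norm. First I would fix the frequency $\mu$ of the wave and set $\nu:=\min\{\mu,\lambda_0\}$; after applying $P_{\les\lambda_0}$ the output frequency is $\les\lambda_0$, and since the wave sits at frequency $\approx\mu$, the Schr\"odinger input must contribute frequency $\approx\lambda_1$ and the convolution forces the wave-Schr\"odinger frequency interaction to be localized at scale $\nu$; in particular one may insert a harmless $\dot P_{\approx\nu}$-type multiplier acting on the wave-Schr\"odinger pair without changing the estimate, so WLOG $\mu\approx\nu\les\lambda_0$ or $\lambda_0\approx\nu\les\mu$.

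Next I would recall (or invoke) the free bilinear estimate: for $v=e^{\pm it|\nabla|}g$ with $\widehat g$ supported at $|\xi|\approx\mu$, and $w$ a free Schr\"odinger wave $w=e^{it\Delta}h$ with $\widehat h$ at $|\xi|\approx\lambda_1$, and $\min\{\mu,\lambda_0\}\lesa\lambda_1$,
\[
\| P_{\les\lambda_0}(v\, w)\|_{L^2_{t,x}(\R^{1+4})}\lesa \nu^{1/2+\alpha}\lambda_1^{-\alpha}\, \nu^{1/2}\,\|g\|_{L^2_x}\|h\|_{L^2_x},
\]
i.e. $\nu^{\alpha}\lambda_1^{-\alpha}\,\nu\,\|g\|_{L^2}\|h\|_{L^2}$ after collecting powers; this is the standard transversal bilinear Fourier restriction estimate (à la Bourgain–Tao–Wolff / Lee–Vargas) between the cone and the paraboloid in dimension $d=4$, where the separation of the two frequency pieces (one near a sphere of radius $\mu$, one near a paraboloid over $|\xi|\approx\lambda_1$) produces the $\alpha$-gain for any $\alpha<1/2$. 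Strictly speaking one should cite the precise reference used earlier in the paper; for the write-up I would state it as the $\alpha<1/2$ bilinear restriction bound and quote the constant $\nu^{1+\alpha}\lambda_1^{-\alpha}$. The $L^\infty_t L^2_x$ norm of $\dot P_\mu v$ equals $\|g\|_{L^2_x}$ since the wave group is unitary, so the right-hand side matches the free version of \eqref{eqn:thm bi wave-schro:low-high}.

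To pass from free to inhomogeneous $u_{\lambda_1}$ I would write $u_{\lambda_1}=e^{it\Delta}u_{\lambda_1}(0)+\mc I_0[(i\p_t+\Delta)u_{\lambda_1}]$ — more precisely, split the forcing $F_{\lambda_1}:=(i\p_t+\Delta)u_{\lambda_1}$ as $F_{\lambda_1}=F^{(1)}+F^{(2)}$ with $F^{(1)}\in L^1_t L^2_x$ and $F^{(2)}\in L^2_t L^{2_*}_x$. For the $L^1_t L^2_x$ piece, Minkowski's integral inequality reduces $\mc I_0[F^{(1)}]$ to an average over $s$ of free solutions $e^{i(t-s)\Delta}F^{(1)}(s)$ with data bounded in $L^2_x$, so the bilinear bound applies uniformly in $s$ and integrating gives the claim with $\|g\|_{L^2}\|F^{(1)}\|_{L^1_tL^2_x}$. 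For the $L^2_t L^{2_*}_x$ piece, a Christ–Kiselev / $TT^*$ argument lets one replace $\mc I_0$ by the untruncated operator $\int_\R e^{i(t-s)\Delta}(\cdot)\,ds$ (the endpoint exponents are off-diagonal so Christ–Kiselev applies), and then I bilinearize: I need $\|P_{\les\lambda_0}(\dot P_\mu v\cdot e^{it\Delta}\widehat{F^{(2)}}\,\widehat{})\|$-type control, which by duality/$TT^*$ in the Schr\"odinger variable follows from the free bilinear estimate together with the dual Strichartz estimate $\|\int e^{-is\Delta}G(s)\,ds\|_{L^2_x}\lesa\|G\|_{L^2_tL^{2_*}_x}$. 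Combining the two contributions and summing the trivial way over the $O(1)$ relevant wave frequencies completes the proof.

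The main obstacle I anticipate is the second half of the last step: making the $TT^*$/duality manipulation compatible with the \emph{bilinear} structure and the non-sharp exponent $\alpha<1/2$, i.e. verifying that the free bilinear estimate can be "transposed" to bound the inhomogeneous term against $L^2_t L^{2_*}_x$ forcing without losing the high-low gain. The clean way is to dualize the whole left-hand side against an $L^2_{t,x}$ test function $\phi$, write $\langle P_{\les\lambda_0}(\dot P_\mu v\, u_{\lambda_1}),\phi\rangle = \langle u_{\lambda_1},\, \overline{\dot P_\mu v}\,P_{\les\lambda_0}\phi\rangle$, and then estimate $\|\overline{\dot P_\mu v}\,P_{\les\lambda_0}\phi\|_{L^2_tL^{2_*}_x + L^1_tL^2_x}^{*}$ — but this reduces exactly to the \emph{same} bilinear estimate applied to $\dot P_\mu v$ and a free Schr\"odinger evolution of $\phi$, so no genuinely new estimate is needed; the bookkeeping of which exponent pairs up with which is the delicate part.
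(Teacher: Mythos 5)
There is a genuine gap at the key step, namely the treatment of the forcing in the endpoint dual Strichartz space $L^2_t L^{2_*}_x$. You assert that ``a Christ--Kiselev / $TT^*$ argument lets one replace $\mc{I}_0$ by the untruncated operator \dots (the endpoint exponents are off-diagonal so Christ--Kiselev applies)''. This is exactly where the argument fails: the Christ--Kiselev lemma requires the temporal input exponent to be strictly smaller than the temporal output exponent, and here the input is $L^2_t L^{4/3}_x$ while the output is $L^2_{t,x}$, so both time exponents are $2$ and the lemma does not apply. (If the forcing were placed in a non-endpoint dual space $L^{a'}_t L^{b'}_x$ with $a>2$, your route would work -- this is precisely Visan's argument cited in the paper -- but the whole content of the theorem is the endpoint case $a=2$, which is needed to pair with the $Z$-norm and the $N^s$-spaces.) The paper instead performs a Whitney decomposition of $\{t>s\}$ into pairs of intervals $I,J\in\D_j$ with $\dist(I,J)\approx 2^j$, bounds each piece in two ways -- once via the free bilinear estimate plus the dual endpoint Strichartz bound (no decay in $j$), and once via Bernstein plus the Keel--Tao local estimate \cite[Lemma 4.1]{Keel1998} (giving decay $2^{-\frac{j}{2}(1-\frac{4}{r})}$ for $r$ near $4$) -- and interpolates to produce a factor $2^{-\epsilon\theta|j-k|}$ which is summable over $j$ after exploiting the $\ell^2_J$ structure of $L^2_t L^{4/3}_x$. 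This is also why the stated gain is $\alpha<\frac12$ rather than the $\frac12$ of the free estimate.

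Your proposed fallback via duality does not close the gap either: after writing $\langle P_{\les\lambda_0}(\dot P_\mu v\, u_{\lambda_1}),\phi\rangle=\langle u_{\lambda_1},\overline{\dot P_\mu v}\,P_{\les\lambda_0}\phi\rangle$ and unpacking the $Z$-norm of $u_{\lambda_1}$ through its Duhamel representation, you must estimate a retarded operator applied to $\overline{\dot P_\mu v}\,\phi$ with $\phi$ an \emph{arbitrary} $L^2_{t,x}$ function carrying no dispersive structure, so the free wave--Schr\"odinger bilinear estimate is not available and the same $L^2_t\to L^2_t$ time-ordering issue reappears in adjoint form. So a genuinely new ingredient (the Whitney/Keel--Tao interpolation, or an equivalent substitute) is needed; it is not bookkeeping. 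A minor additional remark: the free estimate \eqref{eqn:bil wave-schro I free case} does not require deep bilinear restriction machinery -- in the paper it follows from a cube decomposition and an elementary transversality/Plancherel computation, with the full exponent $\alpha=\frac12$; the loss to $\alpha<\frac12$ occurs only in the inhomogeneous upgrade.
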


The second bilinear estimate we exploit is an inhomogeneous version of
a bilinear restriction estimate for the paraboloid. The key point is
that bilinear restriction estimates give additional spatial
integrability, which eventually means that we can replace the
$L^\infty_t L^2_x$ norm of the free wave $v$, with the weaker Besov
norm $L^\infty_t B^{-1}_{4, \infty}$. This is a crucial ingredient for
bounding error terms which arise in the profile decomposition
arguments in Subsection \ref{ss:prof decomp}.

\begin{theorem}[Bilinear restriction for inhomogeneous Schr\"odinger]\label{thm:bilinear schrodinger}
Let $r>\frac{5}{3}$. For any $\mu \in 2^\ZZ$ we have
    \begin{equation}\label{eq:thm bi-schro:high-high}
    		\| \dot{P}_\mu (\overline{w} u) \|_{L^1_t L^r_x(\RR^{1+4})} \lesa \mu^{2-\frac{4}{r}}  \| w \|_{Z} \|u \|_{Z}.
    	\end{equation}
\end{theorem}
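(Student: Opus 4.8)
The plan is to reduce the estimate to the classical bilinear adjoint restriction estimate for the paraboloid and then transfer it to the inhomogeneous setting via the Duhamel formula. The inequality is invariant under the parabolic scaling, so I would simply keep $\mu$ and track the power $\mu^{2-4/r}$ throughout. First I would dispose of the easy range $r\ge 2$: extracting from $\|u\|_Z$ and $\|w\|_Z$ a free part and a retarded Duhamel part and applying the Strichartz estimate \eqref{eqn:stand stri} gives $\|u\|_{L^2_tL^{2^*}_x}\lesa\|u\|_Z$ and $\|w\|_{L^2_tL^{2^*}_x}\lesa\|w\|_Z$, hence $\overline w u\in L^1_tL^2_x$ with norm $\lesa\|w\|_Z\|u\|_Z$, and Bernstein's inequality converts $\dot P_\mu$ into the factor $\mu^{4(1/2-1/r)}=\mu^{2-4/r}$, which is favourable for $r\ge 2$. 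From now on I would fix $\tfrac53<r<2$, where $L^r_x$ sits below $L^2_x$ and the bilinear structure is genuinely needed.

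Next I would reduce to free Schr\"odinger waves. Using the Duhamel formula, write $u=\mc U_0(t)u(0)+\mc I_0[F_u^{(1)}]+\mc I_0[F_u^{(2)}]$ with $\|u(0)\|_{L^2_x}+\|F_u^{(1)}\|_{L^1_tL^2_x}+\|F_u^{(2)}\|_{L^2_tL^{2_*}_x}\lesa\|u\|_Z$, and similarly for $w$. Expanding $\overline w u$ multilinearly yields finitely many terms in which each factor is one of: (i) a free wave $\mc U_0(\cdot)h$ with $\|h\|_{L^2_x}\lesa1$; (ii) $\mc I_0[G]$ with $\|G\|_{L^1_tL^2_x}\lesa1$; (iii) $\mc I_0[G]$ with $\|G\|_{L^2_tL^{2_*}_x}\lesa1$. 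Type (ii) reduces to type (i): since $\dot P_\mu$ acts only in space it commutes with the retarded time cutoff, which is harmless inside $L^1_t$, so writing $G(s)=\|G(s)\|_{L^2_x}\phi_s$ with $\|\phi_s\|_{L^2_x}=1$ and applying Minkowski's inequality bounds that contribution by $\int_\RR\|G(s)\|_{L^2_x}\,\|\dot P_\mu(\overline w\,\mc U_0(\cdot-s)\phi_s)\|_{L^1_tL^r_x}\,ds$, a superposition of type-(i) estimates. Type (iii) is the genuinely new ingredient: such $\mc I_0[G]$ is not an average of free solutions, but applying \eqref{eqn:stand stri} on a partition of $\RR$ into intervals shows that $e^{-it\Delta}\mc I_0[G]$ has bounded $2$-variation, i.e.\ $\mc I_0[G]$ lies in the adapted $V^2$ space with norm $\lesa1$; because the free--free estimate below will be proved for an open range of $r$ it carries $\varepsilon$-room, and so extends from free ($U^2$-type) factors to $V^2$-type factors by the standard $U^p$--$V^p$ transference principle.

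It would then remain to prove, with a little room in $r>\tfrac53$, the free--free estimate
\[
 \|\dot P_\mu\big(\overline{\mc U_0(t)g}\;\mc U_0(t)f\big)\|_{L^1_tL^r_x(\RR^{1+4})}\lesa\mu^{2-4/r}\|g\|_{L^2_x}\|f\|_{L^2_x}.
\]
I would decompose $g,f$ into dyadic frequency pieces $g_{N_0},f_{N_1}$; the spatial frequency support of $\overline{\mc U_0(t)g_{N_0}}\,\mc U_0(t)f_{N_1}$ forces either (a) $\max\{N_0,N_1\}\sim\mu$ with $\min\{N_0,N_1\}\lesa\mu$ (well-separated carrier frequencies), or (b) $N_0\sim N_1=:N\gtrsim\mu$ with the output at frequency $\sim\mu$ coming from the difference of the carrier frequencies (near-parallel interaction). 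In case (a) I would invoke Tao's bilinear adjoint restriction theorem for the paraboloid in $\RR^5$, valid in $L^q_{t,x}$ for $q>\tfrac75$, and interpolate the resulting $L^q_{t,x}$ bound against the trivial Strichartz bound $L^2_tL^{2^*}_x\cdot L^2_tL^{2^*}_x\subset L^1_tL^2_x$ to land in $L^1_tL^r_x$; transversality yields a genuine power gain in $\min\{N_0,N_1\}/\max\{N_0,N_1\}$, so summation over frequencies converges. In case (b) I would run a Whitney decomposition of the annulus $\{|\xi|\sim N\}$ into caps of diameter $\sim\mu$, so that the interaction of two non-adjacent caps is transverse at scale $\mu$; rescaling such caps to unit scale reduces matters to case (a), and summing over the caps and over $N\gtrsim\mu$ converges thanks to the gain in $\mu/N$. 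In both cases the surviving power of $\mu$ is the scaling-critical $\mu^{2-4/r}$.

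The hard part will be the type-(iii) reduction: a Duhamel term forced in the dual endpoint Strichartz space $L^2_tL^{2_*}_x$ is not a superposition of free Schr\"odinger waves and only obeys a two-variation bound, so transferring the free--free estimate to it requires making precise the $\varepsilon$-room hidden in $r>\tfrac53$ and preserving it through the Whitney, rescaling and interpolation steps (equivalently, one may argue more directly by a time-partition combined with the retarded Strichartz estimate on each interval, at the cost of an $\ell^2$-in-intervals summation absorbed by the bilinear gain). Once this bookkeeping is in place, the remainder is the classical bilinear restriction argument together with summation of geometric series.
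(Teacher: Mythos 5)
Your reduction of the $Z$-norm to three types of factors, the Minkowski argument for the $L^1_tL^2_x$-forced Duhamel piece, and the observation that $\mc{I}_0[G]\in V^2_\Delta$ when $G\in L^2_tL^{2_*}_x$ (interval-wise dual endpoint Strichartz plus square-summing) are all correct, and your easy case $r\g 2$ is fine. But there are two genuine gaps. First, your free--free step does not close: you cannot obtain the mixed-norm bound $\|\dot{P}_\mu(\overline{e^{it\Delta}g}\,e^{it\Delta}f)\|_{L^1_tL^r_x}\lesa\mu^{2-4/r}\|g\|_2\|f\|_2$ with $\frac53<r<2$ by interpolating Tao's $L^q_{t,x}$ estimate ($q>\frac75$) against the Strichartz bound in $L^1_tL^2_x$. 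Interpolation of mixed Lebesgue exponents places the result on the segment joining $(\frac1q,\frac1q)$ and $(1,\frac12)$; the time exponent $1$ is an endpoint of that segment, so the only point with $p_t=1$ is $(1,\frac12)$, i.e.\ $r=2$. The $L^1_t L^r_x$ ($r<2$) bilinear restriction estimate is a strictly stronger statement than Tao's theorem -- this is exactly why the paper cites Lee--Vargas for $q>1$ and imports the $q=1$ case from \cite{Candy2017}, in the atomic $U^2_\Delta$ form (its Theorem on atomic bilinear restriction, proved there by a cube decomposition and $U^2$-orthogonality). Your Whitney-cap case (b) then rests on case (a), so the whole homogeneous input is unproved as written; you would have to quote \cite{Candy2017} (or Lee--Vargas plus an endpoint argument) rather than derive it by interpolation.

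Second, the type-(iii) step is not covered by ``the standard $U^p$--$V^p$ transference principle.'' That principle requires boundedness of the bilinear map into $L^1_tL^r_x$ on $U^p_\Delta$ for some $p>2$ (or an interpolation partner valid for $V^2$), and the $\e$-room in $r$ does not supply it: the same $L^1_t$ endpoint obstructs interpolation in the atomic-space scale, and a crude atom-by-atom argument produces a factor growing with the number of intervals. Transferring bilinear restriction estimates to $V^2$ is itself a substantial piece of work (and typically comes with losses), which is precisely what the paper avoids. Its actual proof is the route you only mention parenthetically: decompose the retarded kernel $\ind_{\{t>s\}}$ by a Whitney decomposition in time, note that for each dyadic scale $j$ the function $\sum_{I,J\in\D_j,\,I\g J,\,\dist(I,J)\approx 2^j}\ind_I(t)\int_\RR e^{i(t-s)\Delta}\ind_J(s)F(s)\,ds$ is a $U^2_\Delta$ atom with norm $\lesa\|F\|_{L^2_tL^{4/3}_x}$ (dual endpoint Strichartz, square-summed over the disjoint $J$), apply the atomic bilinear estimate at each scale, and gain summability in $j$ from the Keel--Tao off-diagonal bound \cite[Lemma 4.1]{Keel1998} in $L^2_tL^p_x$ for $p$ near $4$, interpolated against the bilinear bound; this is then run twice, once for each factor. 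So the skeleton of your reduction is right, but the two steps you flag as routine (the mixed-norm free--free estimate and the $V^2$ transference) are exactly where the substance lies, and as proposed they do not go through.
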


The range $r \g \frac{5}{3}$ is sharp, in the sense that \eqref{eq:thm bi-schro:high-high} fails for $r<\frac{5}{3}$. Note that by taking $w$ and $u$ to be free solutions to the Schr\"odinger equation, \eqref{eq:thm bi-schro:high-high} recovers the bilinear restriction estimates for the paraboloid in $L^1_t L^r_x$. Bilinear restriction estimates for the paraboloid were first obtained by Tao \cite{Tao2003a}, this was then extended to the mixed norm case $L^q_t L^r_x$ with $q>1$ by Lee-Vargas \cite{Lee2008}. The case $L^1_t L^r_x$, which corresponds to the homogeneous version of \eqref{eq:thm bi-schro:high-high}, can be found in \cite{Candy2017}. The key importance of Theorems \ref{thm:bilinear wave-schrodinger} and \ref{thm:bilinear schrodinger} is that they hold for \emph{inhomogeneous} solutions to the Schr\"odinger equation and wave equations, and thus are particularly well suited to iterative arguments arising in the study of nonlinear PDE.

In order to simplify the presentation, we do not attempt to state the
bilinear estimates in Theorem \ref{thm:bilinear wave-schrodinger} and
Theorem \ref{thm:bilinear schrodinger} in the greatest possible
generality. However, it is clear from the proof given below, that
similar statements hold in general dimensions and for general
frequency interactions (provided only that the corresponding estimate for free solutions holds).

The strategy to prove both Theorem \ref{thm:bilinear schrodinger} and Theorem \ref{thm:bilinear wave-schrodinger} is similar. For instance, to prove Theorem \ref{thm:bilinear wave-schrodinger}, we start by observing that the estimate is true for free solutions. More precisely, we claim that if $\lambda_0, \lambda_1 \in 2^\NN$ and $ \mu \in 2^\ZZ$ with $\min\{\mu, \lambda_0\}\lesa \lambda_1$ then
        \begin{equation}\label{eqn:bil wave-schro I free case}
            \|  P_{\les \lambda_0}( \dot{P}_\mu v e^{it\Delta} f_{\lambda_1} ) \|_{L^2_{t,x}(\RR^{1+4})} \lesa   \Big( \frac{\min\{\mu, \lambda_0\}}{\lambda_1}\Big)^{\frac{1}{2}} \min\{\mu, \lambda_0\} \| \dot{P}_{\mu} v \|_{L^\infty_t L^2_x} \| f_{\lambda_1} \|_{L^2_{x}},
        \end{equation}
where we recall that $v = e^{it|\nabla|} g$ is a solution to the free wave equation. If $ \lambda_1 \approx 1$, then $\mu \lesa \lambda_0 \approx 1$, and so \eqref{eqn:bil wave-schro I free case} follows from H\"older's inequality and the $L^2_t L^\infty_x$ Strichartz estimate for the free wave equation \cite{Keel1998}
        $$ \|  P_{\les \lambda_0}( \dot{P}_\mu v e^{it\Delta} f_{\lambda_1} ) \|_{L^2_{t,x}(\RR^{1+4})} \lesa \| \dot{P}_\mu v \|_{L^2_t L^\infty_x} \| f_{\lambda_1} \|_{L^2_x} \lesa \mu^{\frac{3}{2}} \| \dot{P}_{\mu} g \|_{L^2_x} \| f_{\lambda_1} \|_{L^2_x}. $$
On the other hand, if $\lambda_1 \gg 1$, then we decompose the Fourier support into cubes of size $\lambda_0$. More precisely, let $\mc{Q}_{\lambda_0}$ denote a decomposition of $\RR^4$ into cubes $q$ of diameter $\frac{{\lambda_0}}{100}$, and let $P_q$ be the corresponding Fourier localisation operators such that
    $$ f = \sum_{q \in \mc{Q}_\mu} P_q f, \qquad \supp \widehat{P_q f} \subset q.$$
Then decomposing $f$ and $g$ using the Fourier multipliers $P_q$, and noting that the Fourier support of the output is constrained to frequencies $\lesa \lambda_0$,
    \begin{align*}
      \| P_{\les \lambda_0}( \dot{P}_\mu v e^{it\Delta} f_{\lambda_1} ) \|_{L^2_{t,x}}
                                &\lesa \sum_{\substack{q, \tilde{q} \in \mc{Q}_{\lambda_0} \\ \dist(q, \tilde{q}) \lesa {\lambda_0}}} \| \dot{P}_\mu P_q v e^{it\Delta} P_{\tilde q} f_{\lambda_1} \|_{L^2_{t,x}} \\
      &\lesa \lambda_1^{-\frac{1}{2}} \big(\min\{\mu, \lambda_0\}\big)^{\frac{3}{2}} \sum_{\substack{q, \tilde{q} \in \mc{Q}_{\lambda_0} \\ \dist(q, \tilde{q}) \lesa {\lambda_0}}} \|\dot{P}_{\mu} P_q g\|_{L^2_x} \| P_{\tilde q} f_{\lambda_1} \|_{L^2_x}\\
                                &\lesa \Big( \frac{\min\{\mu, \lambda_0\}}{\lambda_1}\Big)^{\frac{1}{2}} \min\{\mu, \lambda_0\} \| \dot{P}_\mu v \|_{L^\infty_t L^2_x} \| f_{\lambda_1} \|_{L^2_{x}}
    \end{align*}
where the $L^2_{t,x}$ bound follows from a short computation using Plancherel (see, for instance, \cite[Lemma 2.6]{Candy2018} or \cite[Theorem 5.2]{Candy2017}).

In view of the estimate \eqref{eqn:bil wave-schro I free case}, a somewhat standard transference type argument implies that it suffices to prove that
        $$ \|  P_{\les \lambda_0}( \dot{P}_\mu v e^{it\Delta} u_{\lambda_1} ) \|_{L^2_{t,x}} \lesa  \lambda_1^{-\frac{1}{2}+\epsilon} \big(\min\{\mu, \lambda_0\}\big)^{\frac{3}{2}-\epsilon}  \|\dot{P}_\mu v \|_{L^\infty_t L^2_x} \big( \| u_{\lambda_1} \|_{L^\infty_t L^2_x} + \| (i\p_t + \Delta) u_{\lambda_1} \|_{L^2_t L^{\frac{4}{3}}_x}\big).$$
Again applying the estimate for free solutions, the Duhamel formula and the (dual) endpoint Strichartz estimate implies that the first estimate in Theorem \ref{thm:bilinear wave-schrodinger} would then follow from the inhomogeneous estimate
        \begin{equation}\label{eq:bilinear wave-schro inhom}
        \Big\| P_{\les \lambda_0} \Big[ \dot{P}_\mu v(t) \int_{-\infty}^t e^{i(t-s)\Delta} F_{\lambda_1}(s) ds \,\Big]\Big\|_{L^2_{t,x}}
                \lesa \lambda_1^{-\frac{1}{2}+\epsilon} \big(\min\{\mu, \lambda_0\}\big)^{\frac{3}{2}-\epsilon} \| v \|_{L^\infty_t L^2_x} \| F_{\lambda_1} \|_{L^2_t L^\frac{4}{3}_x}.
        \end{equation}
To prove \eqref{eq:bilinear wave-schro inhom}, we start by observing that for any intervals $I, J \subset \RR$ the estimate for free solutions \eqref{eqn:bil wave-schro I free case} together with the endpoint Strichartz estimate immediately implies that
        \begin{align}
          \Big\| P_{\les \lambda_0} \Big[ \dot{P}_\mu v(t) \ind_I(t) \int_J e^{i(t-s)\Delta} F_{\lambda_1}(s) ds \Big] \Big\|_{L^2_{t,x}}
                            &\lesa \lambda_1^{-\frac{1}{2}} \big(\min\{\mu, \lambda_0\}\big)^{\frac{3}{2}}   \| \dot{P}_\mu v \|_{L^\infty_t L^2_x} \Big\| \int_J e^{- i s \Delta} F_{\lambda_1}(s) ds \Big\|_{L^2_x} \notag \\
                            &\lesa \lambda_1^{-\frac{1}{2}} \big(\min\{\mu, \lambda_0\}\big)^{\frac{3}{2}}  \| \dot{P}_\mu  v \|_{L^\infty_t L^2_x} \| F_{\lambda_1}\|_{L^2_t L^\frac{4}{3}_x} \label{eq:temp wave-schro bilinear inhom}.
        \end{align}
If we instead put $F_{\lambda_1} \in L^{a'}_t L^{b'}_x$ for some non-endpoint Strichartz admissible pair $(a,b)$, then as $a>2$, Theorem \ref{thm:bilinear wave-schrodinger} would simply follow from an application of the Christ-Kiselev Lemma. This argument was used by Visan \cite[Lemma 2.5]{Visan2007} to prove a bilinear $L^2_{t, x}$ estimate for the Schr\"odinger equation. In the endpoint case $a=2$ the Christ-Kiselev Lemma does not apply, and we instead need combine the above argument with a Whitney decomposition and an estimate of Keel-Tao \cite{Keel1998} used in the proof of the endpoint Strichartz estimate.

\begin{proof}[Proof of Theorem \ref{thm:bilinear wave-schrodinger}] For ease of notation, we let $\sigma = \min\{\mu, \lambda_0\}$. A direct application of the estimate for free solutions, \eqref{eqn:bil wave-schro I free case}, implies that
  \begin{align}
        \Big\| P_{\les \lambda_0} \Big[ \dot{P}_\mu v(t)\int_{-\infty}^t e^{i(t-s)\Delta} F_{\lambda_1}(s) ds \Big]\Big\|_{L^2_{t,x}}
                &\les  \int_{\RR} \Big\| P_{\les \lambda_0} \Big[ \dot{P}_\mu v(t) e^{it \Delta} \big( e^{-is\Delta} F_{\lambda_1}(s)\big)\Big]\Big\|_{L^2_{t,x}} ds \notag \\
                &\lesa \lambda_1^{-\frac{1}{2}} \sigma^{\frac{3}{2}}  \|  \dot{P}_\mu v \|_{L^\infty_t L^2_x} \| F_{\lambda_1} \|_{L^1_t L^2_x}.
                \label{eqn:thm bilinear wave-schro:trivial L1 est}
  \end{align}
Arguing as above, another application of \eqref{eqn:bil wave-schro I free case}, together with the Duhamel formula shows that it suffices to prove \eqref{eq:bilinear wave-schro inhom}. Let $\D_j$ denote a decomposition of $\RR$ into left closed and right open intervals of length $2^j$. For intervals $I=[a_1, a_2)$ and $J=[b_1, b_2)$ we write $I\g J$ if $a_1\g b_2$. An application of Bernstein's inequality together with \cite[Lemma 4.1]{Keel1998} (and duality) implies that for all $r$ in a neighbourhood of $4$ and any $I, J \in \D_j$ such that $\dist(I, J) \approx 2^j$ we have
    \begin{align*}
       \Big\| P_{\les \lambda_0} \Big[ \dot{P}_\mu v(t) \ind_I(t) \int_J e^{i(t-s)\Delta} F_{\lambda_1}(s) ds \Big]\Big\|_{L^2_{t,x}}
                            &\lesa \sigma^{\frac{4}{r}} \| \dot{P}_\mu v \|_{L^\infty_t L^2_x} \Big\| \ind_I(t) \int_J e^{i(t-s)\Delta} F_{\lambda_1}(s) ds \Big\|_{L^2_t L^r_x} \\
                            &\lesa \sigma^{\frac{4}{r}} 2^{ - \frac{j}{2} (1 - \frac{4}{r})  } \| \dot{P}_\mu v \|_{L^\infty_t L^2_x} \| F_{\lambda_1} \|_{L^2_t L^\frac{4}{3}_x}.
    \end{align*}
Hence, taking $\theta \in [0, 1]$ and interpolating with \eqref{eq:temp wave-schro bilinear inhom} shows that for any $I, J \in \D_j$ with $\dist(I, J) \approx 2^j$ and any sufficiently small $\epsilon>0$, by choosing $r$ close to $4$ appropriately, we have
    \begin{align*}
        \Big\| P_{\les \lambda_0} \Big[ \dot{P}_\mu v(t) \ind_I(t) \int_J e^{i(t-s)\Delta} F_{\lambda_1}(s) ds \Big] \Big\|_{L^2_{t,x}}
                                    &\lesa \Big(\lambda_1^{-\frac{1}{2}} \sigma^{\frac{3}{2}}\Big)^{1-\theta} \Big( \sigma^{\frac{4}{r}} 2^{ - 2j (\frac{1}{4} - \frac{1}{r})  }\Big)^\theta \|\dot{P}_\mu v \|_{L^\infty_t L^2_x} \| F_{\lambda_1}\|_{L^2_t L^\frac{4}{3}_x}  \\
                                    &\les  \Big( \frac{\sigma}{{\lambda_1}}\Big)^{\frac{1-\theta}{2}} \sigma 2^{ - \epsilon \theta | j - k|}  \|\dot{P}_\mu v \|_{L^\infty_t L^2_x} \|F_{\lambda_1} \|_{L^2_t L^\frac{4}{3}_x}
    \end{align*}
where we choose $k \in \ZZ$ such that $2^{-\frac{1}{2} k } \approx \sigma$. To conclude the proof of \eqref{eq:bilinear wave-schro inhom}, we use the Whitney decomposition
         \begin{equation}\label{eq:whitney decomp} \ind_{\{t>s\}}(t,s) = \sum_{j \in \ZZ} \sum_{ \substack{ I, J \in \D_j, I\g J\\ \dist(I, J)\approx 2^j}} \ind_I(t) \ind_J(s) \qquad \qquad \text{ for a.e. $(t,s) \in \RR^2$} \end{equation}
and observe that for any $0<\theta<1$ we have
        \begin{align*}
            \Big\|P_{\les \lambda_0} \Big[ \dot{P}_\mu v(t) \int_{-\infty}^t e^{i(t-s)\Delta} F_{\lambda_1}(s) ds \Big] \Big\|_{L^2_{t,x}}
                    &\lesa \sum_{j \in \ZZ} \Big( \sum_{ \substack{ I, J \in \D_j, I\g J\\ \dist(I, J)\approx 2^j}} \Big\| P_{\les \lambda_0} \Big[ \dot{P}_\mu v(t)  \ind_I(t) \int_J e^{i(t-s)\Delta} F_{\lambda_1}(s) ds \Big] \Big\|_{L^2_{t,x}}^2\Big)^\frac{1}{2} \\
                    &\lesa \Big( \frac{\sigma}{{\lambda_1}}\Big)^{\frac{1-\theta}{2}}\sigma \| \dot{P}_\mu v \|_{L^\infty_t L^2_x} \sum_{j \in \ZZ}  2^{ - \epsilon \theta | j - k|} \Big( \sum_{ \substack{ I, J \in \D_j, I\g J\\ \dist(I, J)\approx 2^j}} \| \ind_J F_{\lambda_1}\|_{L^2_t L^\frac{4}{3}_x}^2 \Big)^\frac{1}{2}  \\
                    &\lesa  \Big( \frac{\sigma}{{\lambda_1}}\Big)^{\frac{1-\theta}{2}}\sigma \| \dot{P}_\mu v \|_{L^\infty_t L^2_x} \| F_{\lambda_1}\|_{L^2_t L^\frac{4}{3}_x}.
        \end{align*}
\end{proof}

The proof of the remaining bilinear estimate, Theorem
\ref{thm:bilinear schrodinger}, is more involved, as we are trying to
put the product into $L^1_t L^r_x$. In particular, unlike the proof of
Theorem \ref{thm:bilinear wave-schrodinger}, we cannot gain an
$\ell^2$ sum over the intervals $I \in \D_j$ before using the
corresponding bilinear restriction estimate for free
solutions. Instead, the key new ingredient is an \emph{atomic}
bilinear restriction estimate from \cite{Candy2017}. To state this result precisely, we need some additional notation. A function $\phi \in L^\infty_t L^2_x$ is an $\emph{atom}$ if we can write $ \phi(t) = \sum_{I} \ind_I(t) e^{it\Delta} f_I$, with the intervals $I \subset \RR$ forming a partition of $\RR$, and the $f_I:\RR^d \to \CC$ satisfy
        $$\Big( \sum_I \| f_I \|_{L^2_x}^2\Big)^\frac{1}{2} \les 1.$$
We then take
        $$U^2_\Delta = \{ \sum_j c_j \phi_j \mid \text{ $\phi_j$ an atom and } (c_j) \in \ell^1 \} $$
with the induced norm
        $$ \| u \|_{U^2_\Delta} = \inf_{ u = \sum_j c_j \phi_j} \sum_j |c_j|$$
where the infimum is over all representations of $u$ in terms of atoms. These spaces were introduced in unpublished work of Tataru, and studied in detail in \cite{Koch2005, Hadac2009}. The atomic bilinear restriction estimate we require is the following.

\begin{theorem}\label{thm:atomic bilinear restriction}
Let $r>\frac{5}{3}$. Then, for any $\mu \in 2^\ZZ$ and $w, u \in U^2_\Delta$ we have
        \begin{equation}\label{eq:prop atomic bilinear:high-high}
        			\| \dot{P}_\mu( \bar{w} u ) \|_{L^1_t L^r_x(\RR^{1+4})} \lesa \mu^{2 - \frac{4}{r}} \| u \|_{U^2_\Delta} \| v\|_{U^2_\Delta}.
        	\end{equation}
\end{theorem}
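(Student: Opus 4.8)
The plan is to reduce to atomic data and then prove the estimate for atoms directly by the induction-on-scales method of \cite{Candy2017}, rather than by transference from free solutions.

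The starting point is that the bound is known for \emph{free} solutions: $\| \dot P_\mu(\overline{e^{it\Delta}f}\,e^{it\Delta}g)\|_{L^1_t L^r_x(\RR^{1+4})}\lesa \mu^{2-\frac4r}\|f\|_{L^2_x}\|g\|_{L^2_x}$ for $r>\frac53$, which is the $q=1$ endpoint in dimension $d=4$ of the mixed-norm bilinear restriction estimate for the paraboloid (Lee--Vargas \cite{Lee2008}, with the $L^1_t$ endpoint treated in \cite{Candy2017}), and the range $r>\frac53$ is sharp. Granting this, the reduction to atoms is soft: the map $(w,u)\mapsto \dot P_\mu(\bar w u)$ is conjugate-linear in $w$ and linear in $u$, so by the definition of $U^2_\Delta$ as the $\ell^1$-hull of atoms it suffices to prove $\| \dot P_\mu(\bar\phi\psi)\|_{L^1_t L^r_x}\lesa \mu^{2-\frac4r}$ for atoms $\phi=\sum_I \ind_I e^{it\Delta}f_I$ and $\psi=\sum_J \ind_J e^{it\Delta}g_J$; indeed, for $w=\sum_m c_m\phi_m$ and $u=\sum_n d_n\psi_n$ one then gets $\|\dot P_\mu(\bar w u)\|_{L^1_t L^r_x}\le \sum_{m,n}|c_m||d_n|\,\|\dot P_\mu(\bar\phi_m\psi_n)\|_{L^1_t L^r_x}\lesa \mu^{2-\frac4r}\|w\|_{U^2_\Delta}\|u\|_{U^2_\Delta}$ after taking the infimum over representations.

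The atomic case is not soft. Writing $\{K\}=\{I\cap J\}$ for the common refinement of the two partitions, one has $\bar\phi\psi=\overline{e^{it\Delta}f_{I(K)}}\,e^{it\Delta}g_{J(K)}$ on each $K$, so by disjointness of the $K$,
\[
\| \dot P_\mu(\bar\phi\psi)\|_{L^1_t L^r_x} = \sum_K \int_K \big\| \dot P_\mu\big(\overline{e^{it\Delta}f_{I(K)}}\,e^{it\Delta}g_{J(K)}\big)(t)\big\|_{L^r_x}\,dt.
\]
Bounding each $\int_K$ by the free estimate over all of $\RR$ leaves $\sum_{I\cap J\ne\emptyset}\|f_I\|_{L^2}\|g_J\|_{L^2}$, which is \emph{not} controlled by $(\sum_I\|f_I\|^2_{L^2})^{1/2}(\sum_J\|g_J\|^2_{L^2})^{1/2}$, since an interval of one partition may meet arbitrarily many of the other; and since the target has $L^1$ rather than $L^2$ integrability in time, neither the standard $U^2$-transference (lossless only for $L^q_t$ with $q\ge2$) nor a Christ--Kiselev argument (which exploits being away from the temporal endpoint, as in the proof of Theorem~\ref{thm:bilinear wave-schrodinger}) applies. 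Following \cite{Candy2017}, and in the spirit of Tao \cite{Tao2003a} and Lee--Vargas \cite{Lee2008}, one instead proves the atomic estimate by induction on scales: on spacetime balls $B_R$ one decomposes the frequency supports of the $f_I$, $g_J$ into caps of size $R^{-1/2}$, uses the transversality of the two sheets to localise the product of two wave packets to a tube of controlled dimensions, treats the low-overlap regime by $L^2$-orthogonality (Plancherel at each fixed time together with $\sum_I\|f_I\|^2_{L^2}\le1$ and $\sum_J\|g_J\|^2_{L^2}\le1$), and treats the high-overlap regime by parabolic rescaling --- under which the atomic structure, a partition of time plus an $\ell^2$-summable family of $L^2$ data, is stable --- together with the induction hypothesis; this produces an admissible $R^\epsilon$ loss and, on summing over $R$, the stated estimate with the sharp exponent $2-\frac4r$.

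I expect the main obstacle to be precisely the $L^1_t$ endpoint combined with the sharpness of the range $r>\frac53$: the tail and error terms generated by the induction on scales must be summed in $L^1_t$ with no slack, and one must check that assembling the per-ball bounds into a global $L^1_t L^r_x$ estimate remains compatible with the atomic decomposition rather than with a single free profile. The remaining ingredients --- the bilinear reduction to atoms and the underlying transversality geometry --- are routine by comparison.
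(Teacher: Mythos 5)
Your diagnosis of the difficulty is right (naive interval-by-interval use of the free estimate, $U^2$-transference, and Christ--Kiselev all fail at the $L^1_t$ endpoint), and the reduction of \eqref{eq:prop atomic bilinear:high-high} to a pair of atoms is indeed soft. But your route is not the paper's, and as written it has a genuine gap. The paper does not reduce to atoms and does not reprove any restriction theorem: it quotes \cite[Corollary 1.6]{Candy2017}, which already gives the \emph{transverse} bilinear estimate $\| \overline{P_q w}\, P_{\tilde q} u \|_{L^1_t L^r_x} \lesa \mu^{2-\frac{4}{r}} \| P_q w \|_{U^2_\Delta} \| P_{\tilde q} u \|_{U^2_\Delta}$ directly for $U^2_\Delta$ functions with frequency supports in cubes $q,\tilde q$ of side $\sim\mu$ at distance $\approx\mu$; the only new content of the proof is the orthogonality argument that assembles these transverse pieces. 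You instead propose to reprove that transverse input (for atoms) by induction on scales; that is the correct underlying method of \cite{Candy2017}, but it is left at the level of a sketch, and it is exactly the part the paper outsources.

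The concrete gap is the missing transversality-extraction step. The theorem places no support or separation hypothesis on $w$ and $u$; only the output carries $\dot{P}_\mu$. Your stated starting point, the free bound $\| \dot{P}_\mu( \overline{e^{it\Delta}f}\, e^{it\Delta} g) \|_{L^1_t L^r_x} \lesa \mu^{2-\frac{4}{r}} \|f\|_{L^2}\|g\|_{L^2}$ with arbitrary $f,g$, is not literally the Lee--Vargas/\cite{Candy2017} bilinear restriction theorem, which concerns separated caps; and your induction-on-scales outline for atoms invokes ``transversality of the two sheets'' that simply is not there for general inputs (both atoms may live at frequency $N \gg \mu$, with only their frequency \emph{difference} of size $\mu$ contributing to the output). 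What is needed --- and what constitutes essentially the whole of the paper's proof --- is: after a Bernstein reduction to $\frac53<r<2$, decompose in frequency into cubes $q \in \mc{Q}_\mu$ of side $\sim \mu$, observe that $\dot{P}_\mu( \overline{P_q w}\, P_{\tilde q} u ) = 0$ unless $\dist(q,\tilde q) \approx \mu$ (this is where the transversality is manufactured), apply the transverse $U^2_\Delta$ estimate to each admissible pair, and resum by Cauchy--Schwarz using the almost-orthogonality bound $\big( \sum_{q} \| P_q w \|_{U^2_\Delta}^2 \big)^{1/2} \lesa \| w \|_{U^2_\Delta}$ (\cite[Proposition 4.3]{Candy2018a}; at the level of a single atom this square-sum bound is an elementary computation, so it is compatible with your atomic reduction). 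Without this decomposition-and-recombination step your argument does not get started, and with it your plan amounts to reproving the cited transverse estimate --- legitimate, but far heavier than the paper's two-step proof.
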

\begin{proof}
This is an application of \cite[Corollary 1.6]{Candy2017} together with an additional orthogonality argument. An application of Bernstein's inequality shows that it suffices to consider the case $\frac{5}{3}< r< 2$. Let $\mc{Q}_\mu$ denote a decomposition of $\RR^4$ into cubes $q$ of diameter $\frac{\mu}{100}$, and let $P_q$ be the corresponding Fourier localisation operators such that
    $$ f = \sum_{q \in \mc{Q}_\mu} P_q f, \qquad \supp \widehat{P_q f} \subset q.$$
A short computation using \cite[Corollary 1.6]{Candy2017} shows that for any $q, \tilde{q} \in \mc{Q}_\mu$ such that $\dist(q, \tilde{q}) \approx \mu$ we have
        $$ \| \overline{P_q w} P_{\tilde{q}} u \|_{L^1_t L^r_x} \lesa \mu^{2-\frac{4}{r}} \| P_q w \|_{U^2_\Delta} \| P_{\tilde{q}} u \|_{U^2_\Delta}. $$
Therefore, noting that $\dot{P}_\mu ( \overline{P_q w} P_{\tilde{q}} u ) = 0$ unless $\dist(q, \tilde{q})\approx \mu$, we conclude that
    \begin{align*}
        \| \dot{P}_\mu( \bar{w} u ) \|_{L^1_t L^r_x}
                        &\lesa \sum_{\substack{ q, \tilde{q} \in \mc{Q}_\mu \\ \dist(q, \tilde{q}) \approx \mu}} \| \overline{P_q w} P_{\tilde{q}} u \|_{L^1_t L^r_x } \\
                        &\lesa \mu^{2 - \frac{4}{r}} \Big( \sum_{q \in \mc{Q}_\mu} \| P_q w\|_{U^2_\Delta}^2 \Big)^\frac{1}{2} \Big( \sum_{\tilde{q} \in \mc{Q}_\mu} \| P_{\tilde{q}} u \|_{U^2_\Delta}^2 \Big)^\frac{1}{2} \\
                        &\lesa \mu^{2 - \frac{4}{r}} \| w \|_{U^2_\Delta} \| u \|_{U^2_\Delta}
    \end{align*}
where the last line follows from the fact that square sums of almost orthogonal Fourier multipliers is bounded in $U^2_\Delta$, see for instance  \cite[Proposition 4.3]{Candy2018a}.
\end{proof}

We now turn to the proof of Theorem \ref{thm:bilinear schrodinger}.

\begin{proof}[Proof of Theorem \ref{thm:bilinear schrodinger}]
We adopt the notation used in the proof of Theorem \ref{thm:bilinear wave-schrodinger}, thus $\D_j$ denotes a set of intervals of length $2^j$ which form a partition of $\RR$. The first step in the proof of  is to show that
        \begin{equation}\label{eq:proof of bil schro:mixed}
            \| \dot{P}_\mu[\bar{w} u] \|_{L^1_t L^r_x} \lesa \mu^{2-\frac{4}{r}} \| w \|_{U^2_\Delta} \Big( \| u \|_{L^\infty_t L^2_x} + \| (i \p_t + \Delta) u \|_{L^2_t L^\frac{4}{3}_x} \Big),
        \end{equation}
thus we can replace one of the $U^2_\Delta$ norms with the inhomogeneous Strichartz type norm. Similar to the proof of Theorem \ref{thm:bilinear wave-schrodinger}, in view of Theorem \ref{thm:atomic bilinear restriction}, it suffices to prove that
        $$
            \Big\| \dot{P}_\mu\Big[\bar{w} \int_{-\infty}^t e^{i(t-s)\Delta} F(s) ds \Big] \Big\|_{L^1_t L^r_x} \lesa \mu^{2-\frac{4}{r}} \| w \|_{U^2_\Delta} \| F \|_{L^2_t L^\frac{4}{3}_x}.
        $$
We start by observing that since $\sum_I \ind_I(t) e^{it\Delta} \int_\RR e^{-is \Delta} F_I(s) ds$ is a rescaled atom, an application of the endpoint Strichartz estimate gives the bound
   \begin{align}
     \Big\| \sum_{\substack{I, J \in \D_j, I \g J \\ \dist(I, J) \approx  2^j}} \ind_I(t) \int_\RR e^{i(t-s) \Delta} \ind_J(s) F(s) ds \Big\|_{U^2_\Delta}
                &\lesa  \Big( \sum_{J \in \D_j} \Big\| \int_\RR e^{-i s \Delta} \ind_J(s)  F(s) ds  \Big\|_{L^\infty_t L^2_x}^2 \Big)^\frac{1}{2} \notag \\
                &\lesa \Big( \sum_{J \in \D_j} \|  \ind_J  F  \|_{L^2_t L^\frac{4}{3}_x}^2 \Big)^\frac{1}{2} = \| F \|_{L^2_t L^\frac{4}{3}_x}. \label{eq:proof of bil schro:U2 bound}
   \end{align}
Consequently, an application of Theorem \ref{thm:atomic bilinear restriction} implies that
    \begin{align}
        \Big\| \dot{P}_{\mu}\Big[ \overline{w} \sum_{\substack{I, J \in \D_j, I \g J \\ \dist(I, J) \approx  2^j}} \ind_I(t) &\int_\RR e^{i(t-s) \Delta} \ind_J(s) F(s) ds \Big]\Big\|_{L^1_t L^r_x} \notag \\
        &\lesa \mu^{2 - \frac{4}{r}}  \| w \|_{U^2_{\Delta}}  \Big\| \sum_{\substack{I, J \in \D_j,  I \g J  \\ \dist(I, J) \approx  2^j}} \ind_I(t) \int_\RR e^{i(t-s) \Delta} \ind_J(s) F(s) ds \Big\|_{U^2_\Delta}  \notag \\
        &\lesa \mu^{2-\frac{4}{r}}  \| w\|_{U^2_\Delta} \| F \|_{L^2_t L^\frac{4}{3}_x}. \label{eq:thm inhom schro:bilinear temp}
    \end{align}
On the other hand, as in the proof of Theorem \ref{thm:bilinear wave-schrodinger}, to gain decay in $j$, we again use an application of \cite[Lemma 4.1]{Keel1998} and observe that for every $p$ in a neighbourhood of $4$
    \begin{align*}
        \Big\|\sum_{\substack{I, J \in \D_j, I \g  J \\ \dist(I, J) \approx  2^j}} \ind_I(t) \int_\RR e^{i(t-s) \Delta} \ind_J(s) F(s) ds \Big\|_{L^2_t L^p_x}
                    &\lesa \Big( \sum_{\substack{I, J \in \D_j, I \g  J \\ \dist(I, J) \approx  2^j}}\Big\| \ind_I(t) \int_\RR e^{i(t-s) \Delta} \ind_J(s) F(s) ds \Big\|_{L^2_t L^p_x}^2 \Big)^\frac{1}{2}\\
                    &\lesa 2^{- \frac{j}{2}( 1- \frac{4}{p})} \| F \|_{L^2_t L^\frac{4}{3}_x}
    \end{align*}
and hence for every $r$ in a neighbourhood of $2$ we have
    \begin{align}
        \Big\| \dot{P}_\mu\Big[ \bar{w} \sum_{\substack{I, J \in \D_j, I \g  J \\ \dist(I, J) \approx  2^j}} &\ind_I(t) \int_\RR e^{i(t-s) \Delta} \ind_J(s) F(s) ds \Big] \Big\|_{L^1_t L^r_x}\notag \\
            &\lesa  \| w\|_{L^2_t L^4_x} \Big\|\sum_{\substack{I, J \in \D_j, I \g  J \\ \dist(I, J) \approx  2^j}} \ind_I(t) \int_\RR e^{i(t-s) \Delta} \ind_J(s) F(s) ds \Big\|_{L^2_t L^{\frac{4r}{4-r}}_x}  \notag \\
            &\lesa 2^{j(\frac{2}{r}-1)} \| w \|_{U^2_\Delta}  \| F \|_{L^2_t L^\frac{4}{3}_x}. \label{eq:thm inhom schro:strichartz temp}
    \end{align}
Let $\frac{5}{3}<r_1 \les r \les r_2 $ with $r_2$ in a neighbourhood of $2$, and take $0<\theta<1$ such that $\frac{1}{r} = \frac{\theta}{r_1} + \frac{1-\theta}{r_2}$. Interpolating between \eqref{eq:thm inhom schro:bilinear temp} and \eqref{eq:thm inhom schro:strichartz temp}, and choosing $r_2$ appropriately,  we obtain for every sufficiently small $\epsilon>0$
    \begin{align*}
        \Big\| \dot{P}_\mu\Big[ \bar{w}  \sum_{\substack{I, J \in \D_j, I \g  J \\ \dist(I, J) \approx  2^j}} \ind_I(t) \int_\RR e^{i(t-s) \Delta} \ind_J(s) F(s) ds \Big] \Big\|_{L^1_t L^r_x} &\lesa \mu^{\theta(2 - \frac{4}{r_1})}  2^{ j( \frac{2}{r_2} -1)(1-\theta)} \| w \|_{U^2_\Delta} \| F \|_{L^2_t L^\frac{4}{3}_x}  \\
        &\approx \mu^{2-\frac{4}{r}} 2^{ - \epsilon |j-k| }  \| w \|_{U^2_\Delta} \| F \|_{L^2_t L^\frac{4}{3}_x}
    \end{align*}
where we take $k \in \ZZ$ such that $2^{-\frac{k}{2}} \approx \mu$. Therefore, applying the Whitney decomposition \eqref{eq:whitney decomp}, we conclude that
    \begin{align*}
      \Big\| \dot{P}_\mu\Big[ \bar{w} \int_{-\infty}^t  e^{i(t-s) \Delta} \ind_J(s) F(s) ds \Big] \Big\|_{L^1_t L^r_x}
                &\les \sum_{j \in \ZZ} \Big\| \dot{P}_\mu\Big[ \bar{w} \sum_{\substack{I, J \in \D_j, I \g  J \\ \dist(I, J) \approx  2^j}} \ind_I(t) \int_\RR e^{i(t-s) \Delta} \ind_J(s) F(s) ds \Big] \Big\|_{L^1_t L^r_x}  \\
                &\lesa \mu^{2-\frac{4}{r}} \sum_{j \in \ZZ}  2^{ -\epsilon |j-k|}  \| w \|_{U^2_\Delta} \| F \|_{L^2_t L^\frac{4}{3}_x} \lesa \mu^{2-\frac{4}{r}} \|w \|_{U^2_\Delta} \| F \|_{L^2_t L^\frac{4}{3}_x}
    \end{align*}
and hence \eqref{eq:proof of bil schro:mixed} follows.

The next step is to replace the $U^2_\Delta$ norm of $w$ with the required inhomogeneous norm. This follows by essentially repeating the above argument, but using \eqref{eq:proof of bil schro:mixed} in place of Theorem \ref{thm:atomic bilinear restriction}. More precisely, an application of \eqref{eq:proof of bil schro:mixed} and the $U^2_\Delta$ bound \eref{eq:proof of bil schro:U2 bound} gives
    \begin{align*}
        \Big\| \dot{P}_\mu \Big[ \sum_{\substack{I, J \in \D_j, I \g J \\ \dist(I, J) \approx  2^j}} \ind_I(t) \int_J \overline{e^{i(t-s) \Delta}  G(s)} ds \, u(t)\Big] \Big\|_{L^1_t L^r_x} \lesa \mu^{2 - \frac{4}{r}} \|G\|_{L^2_t L^\frac{4}{3}_x} \Big( \| u \|_{L^\infty_t L^2_x} + \| (i \p_t + \Delta) u \|_{L^2_t L^\frac{4}{3}_x} \Big).
    \end{align*}
On the other hand, again applying \cite[Lemma 4.1]{Keel1998} we have for every $r$ in a neighbourhood of $2$
    \begin{align*}
       \Big\| \dot{P}_\mu \Big[ \sum_{\substack{I, J \in \D_j, I \g J \\ \dist(I, J) \approx  2^j}} &\ind_I(t) \int_J \overline{e^{i(t-s) \Delta}  G(s)} ds \, u(t)\Big] \Big\|_{L^1_t L^r_x}\\
            &\lesa \Big\|  \sum_{\substack{I, J \in \D_j, I \g J \\ \dist(I, J) \approx  2^j}} \ind_I(t) \int_\RR e^{i(t-s) \Delta} \ind_J(s) G(s) ds  \Big\|_{L^2_t L^\frac{4r}{4-r}_x} \| u \|_{L^2_t L^4_x} \\
            &\lesa  2^{j(\frac{2}{r}-1)} \| G \|_{L^2_t L^\frac{4}{3}_x} \Big( \| u \|_{L^\infty_t L^2_x} + \| ( i \p_t + \Delta ) u \|_{L^2_t L^\frac{4}{2}_x} \Big).
    \end{align*}
Therefore, as in the proof of \eref{eq:proof of bil schro:mixed}, the required bound now follows by interpolation, together with the Whitney decomposition \eref{eq:whitney decomp}. Finally, to replace the norms $\| u \|_{L^\infty_t L^2_x} + \| (i\p_t + \Delta) u \|_{L^2_t L^{\frac43}_x}$ with the norm $\| u \|_Z$ follows from the trivial $L^1_t L^2_x$ transference type argument outlined in \eqref{eqn:thm bilinear wave-schro:trivial L1 est}.
\end{proof}

\section{Refined bilinear estimates}\label{sec:proof of res gain}
In this section we prove two estimates. The first is a version of Theorem \ref{thm:main schro est nonres} with a Besov refinement, which is used to control the error terms in the profile decomposition. The second estimate in this section is a version of the inhomogeneous endpoint Strichartz estimate with two spatially diverging potentials. Again, this estimate plays a key role in the proof of the uniform Strichartz estimate.

\subsection{Refinement of Theorem \ref{thm:main schro est nonres}}\label{subsec:res-int}
The following estimate is the main goal of this section, and it is also one of the core ingredients of this paper.

\begin{theorem} \label{thm:besov gain}
Let $0\les s < 1$ and $d=4$. There exists $0< \theta <1$ and $r>2$ such that for any $\lambda_0, \lambda_1, \lambda_2 \in 2^\NN $, any free solution to the wave equation $v = e^{\pm it|\nabla|} f \in L^\infty_t L^2_x$, and any $\chi(t) \in L^\infty\cap C^\infty(\RR)$ satisfying
    $$ \supp \widetilde{(\chi v)} \subset \{ |\tau| \lesa \lr{\xi}\}$$
we have
        \begin{equation}\label{eqn:thm besov gain:high-low gain} \| P_{\lambda_0}(\chi v  u_{\lambda_1}) \|_{N^s_{\lambda_0}} \lesa  \Big( \frac{\min\{\lambda_0, \lambda_1\}}{\max\{\lambda_0, \lambda_1\}}\Big)^\theta  \|\chi \|_{L^\infty_t} \| v\|_{L^\infty_t L^2_x} \|u_{\lambda_1} \|_{\underline{S}^s_{\lambda_1}} \end{equation}
and
         \begin{equation}\label{eqn:thm besov gain:besov gain} \| \mc{I}_0[P_{\lambda_0}(\chi v_{\lambda_2} u_{\lambda_1})] \|_{S^s_{\lambda_0}} \lesa \Big( \frac{\lambda_{\min}}{\lambda_{\max}}\Big)^\theta \| \chi v_{\lambda_2} \|_{L^\infty_t B^{\frac{4}{r}-2}_{r, \infty}}^\theta \Big(\|\chi \|_{L^\infty_t} \| v_{\lambda_2} \|_{L^\infty_t L^2_x}\Big)^{1-\theta} \| u_{\lambda_1} \|_{\underline{S}^s_{\lambda_1}} \end{equation}
where $\lambda_{\min} = \min\{\lambda_0, \lambda_1, \lambda_2\}$ and $\lambda_{\max} = \max\{\lambda_0, \lambda_1, \lambda_2\}$.
\end{theorem}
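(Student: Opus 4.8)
The plan is to re-run the proof of Theorem~\ref{thm:main schro est nonres} while keeping track of where a frequency gain and a weak Besov norm of the wave are already available, and to supply the one missing estimate — the time-resonant high--high--to--low interaction with $u$ close to the paraboloid — by means of the bilinear estimates of Section~\ref{sec:fre}. Decompose $v=\sum_{\mu\in2^\ZZ}\dot{P}_\mu v$ and $u_{\lambda_1}=\PN{\lambda_1}u+\PF{\lambda_1}u$. In every interaction except the one with $\lambda_0\ll\lambda_1$ and $u_{\lambda_1}=\PN{\lambda_1}u$ close to the paraboloid (a configuration which, by frequency balance, forces $v$ to frequency $\mu\approx\lambda_1$), the estimates \eqref{eqn:thm multilinear est gen:main high-low}, \eqref{eqn:thm multilinear est gen:main low-high with gain}, \eqref{eqn:thm multilinear est gen:main high-high res}, \eqref{eqn:thm multilinear est gen:main high-high nonres} from the proof of Theorem~\ref{thm:main schro est nonres} (used with $\epsilon=0$) already exhibit a power gain $(\lambda_{\min}/\lambda_{\max})^{\theta'}$ together with the norm $\|\chi v\|_{L^\infty_t B^{4/r-2}_{r,\infty}}$ of the wave (and a fortiori $\|\chi\|_{L^\infty_t}\|v\|_{L^\infty_t L^2_x}$, since $L^2\hookrightarrow B^{4/r-2}_{r,\infty}$ for $r>2$), provided one also uses the derivative gain $\|\PF{\lambda_1}u\|_{L^2_{t,x}}\lesssim\lambda_1^{-1-s}\|u_{\lambda_1}\|_{\underline{S}^s_{\lambda_1}}$, Bernstein's inequality to pass from $L^4_x$ to $L^2_x$ on the dyadic pieces of $v$ (the resulting factor $\mu/\lambda_1\les1$ being absorbed by the $\ell^2$-sum $\sum_\mu\|\dot{P}_\mu v\|_{L^\infty_t L^2_x}^2=\|v\|_{L^\infty_t L^2_x}^2$, valid because $v$ is a free wave), and the embedding $\|\cdot\|_{S^s_\lambda}\lesssim\|\cdot\|_{\underline{S}^s_\lambda}$. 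Combining with the energy inequality \eqref{eqn:lem energy ineq:energy} and the elementary bound $\|\chi v_{\lambda_2}\|_{B^{4/r-2}_{r,\infty}}\lesssim\|\chi v_{\lambda_2}\|_{B^{4/r-2}_{r,\infty}}^\theta(\|\chi\|_{L^\infty_t}\|v_{\lambda_2}\|_{L^\infty_t L^2_x})^{1-\theta}$, these cases yield both \eqref{eqn:thm besov gain:high-low gain} and \eqref{eqn:thm besov gain:besov gain}.

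In the resonant region the decisive point is a modulation count: if $\lambda_0\ll\lambda_1$ and $\mu\approx\lambda_1$, then $\xi_u+\xi_v=\xi_0$ with $|\xi_0|=\lambda_0\ll\lambda_1$ forces $\xi_v\approx-\xi_u$, hence the space--time modulation of $\chi\dot{P}_\mu v\,\PN{\lambda_1}u$ is $\approx\lambda_1^2\gg\lambda_0^2$. Consequently $P_{\lambda_0}(\chi\dot{P}_\mu v\,\PN{\lambda_1}u)=P^F_{\lambda_0}(\ldots)$ and its $N^s_{\lambda_0}$ norm reduces to the single term $\lambda_0^{s-1}\|\cdot\|_{L^2_{t,x}}$. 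I would then invoke Theorem~\ref{thm:bilinear wave-schrodinger} (with $\min\{\mu,\lambda_0\}=\lambda_0$, in the form valid for a ``wave-like'' function with $\supp\widetilde{(\chi v)}\subset\{|\tau|\lesssim\lr{\xi}\}$, which follows from the same proof once the corresponding estimate for free waves is provided), obtaining
\[
\|P_{\lambda_0}(\chi\dot{P}_{\lambda_1}v\,\PN{\lambda_1}u)\|_{L^2_{t,x}}\lesssim\Big(\tfrac{\lambda_0}{\lambda_1}\Big)^\alpha\lambda_0\,\|\chi\|_{L^\infty_t}\|\dot{P}_{\lambda_1}v\|_{L^\infty_t L^2_x}\,\|\PN{\lambda_1}u\|_Z ,
\]
and the embedding $\|\PN{\lambda_1}u\|_Z\lesssim\lambda_1^{-s}\|u_{\lambda_1}\|_{\underline{S}^s_{\lambda_1}}$ turns this into the bound $(\lambda_0/\lambda_1)^{\alpha+s}\|\chi\|_{L^\infty_t}\|v\|_{L^\infty_t L^2_x}\|u_{\lambda_1}\|_{\underline{S}^s_{\lambda_1}}$, i.e. \eqref{eqn:thm besov gain:high-low gain} with gain exponent $\theta=\alpha\in(0,\tfrac12)$. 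The far-from-paraboloid part $\PF{\lambda_1}u$ at $\lambda_0\ll\lambda_1$ is reduced by Duhamel (writing $\PF{\lambda_1}u=\PF{\lambda_1}\mc{I}_0[(i\p_t+\De)u_{\lambda_1}]$) to the inhomogeneous bilinear estimate \eqref{eq:bilinear wave-schro inhom} plus the energy bound $\|\mc{I}_0[\PF{\lambda_1}F]\|_{L^\infty_t L^2_x}\lesssim\lambda_1^{-1}\|F_{\lambda_1}\|_{L^2_{t,x}}$, again with a gain; this completes \eqref{eqn:thm besov gain:high-low gain}.

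For the Besov-refined estimate \eqref{eqn:thm besov gain:besov gain} one needs, in this same resonant region (where now $\lambda_2\approx\lambda_1$), a second bound of $\|\mc{I}_0[P_{\lambda_0}(\chi v_{\lambda_2}\PN{\lambda_1}u)]\|_{S^s_{\lambda_0}}$ involving only the weaker norm $\|\chi v_{\lambda_2}\|_{L^\infty_t B^{4/r-2}_{r,\infty}}$. The $L^2_{t,x}$-component of $S^s_{\lambda_0}$ is already controlled (with a gain) by the previous paragraph. For the $L^\infty_t L^2_x\cap L^2_t L^{2^*}_x$-component I would use the sharp energy inequality \eqref{eqn:lem energy ineq:sharp}: testing against $w$ with $\|w\|_Z\le1$, regroup
\[
\int\overline{w}\,P_{\lambda_0}(\chi v_{\lambda_2}\PN{\lambda_1}u)\,dt\,dx=\int\chi v_{\lambda_2}\cdot\dot{P}_{\approx\lambda_2}\big(\overline{P_{\lambda_0}w}\,\PN{\lambda_1}u\big)\,dt\,dx ,
\]
apply Theorem~\ref{thm:bilinear schrodinger} to the Schr\"odinger--Schr\"odinger product with exponent $r'=r/(r-1)$ (so that $r'>\tfrac53\Leftrightarrow r<\tfrac52$, and we fix $r\in(2,\tfrac52)$), together with $\|P_{\lambda_0}w\|_Z\lesssim1$, $\|\PN{\lambda_1}u\|_Z\lesssim\lambda_1^{-s}\|u_{\lambda_1}\|_{\underline{S}^s_{\lambda_1}}$, H\"older in $L^{r}_x/L^{r'}_x$, and the Littlewood--Paley identity $\|\chi v_{\lambda_2}\|_{L^{r}_x}\approx\lambda_2^{\,4/r-2}\|\chi v_{\lambda_2}\|_{B^{4/r-2}_{r,\infty}}$; the powers of $\lambda_2$ combine to $\lambda_2^{8/r-4}\les1$ and, since $\lambda_0\le\lambda_1$, one obtains a bound by a genuine power of $\lambda_0/\lambda_1$ times $\|\chi v_{\lambda_2}\|_{L^\infty_t B^{4/r-2}_{r,\infty}}\|u_{\lambda_1}\|_{\underline{S}^s_{\lambda_1}}$ (a crude H\"older likewise gives the analogous bound, with at worst a bounded loss, for the $L^2_{t,x}$-component). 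Taking the geometric mean ($A\le A^\theta A^{1-\theta}$) of the pure-$L^2$ estimate of the previous paragraph and this pure-Besov estimate, and choosing the interpolation parameter small enough that the surviving power of $\lambda_0/\lambda_1$ dominates $\theta$, yields \eqref{eqn:thm besov gain:besov gain} with a single $\theta$ and this $r$.

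The main obstacle is precisely the resonant high--high--to--low interaction with $u$ near the paraboloid: Lemma~\ref{lem:elem prod est} already controls this term using the $L^\infty_t L^2_x$ norm of $v$, but with \emph{no} frequency gain, which only yields $\ell^1$-summability over the dyadic frequencies of $v$ and is insufficient for the profile-decomposition and supersonic-approximation arguments to come. Extracting a genuine power of $\lambda_{\min}/\lambda_{\max}$ here is exactly what forces the use of the bilinear Fourier restriction estimates of Section~\ref{sec:fre}, and what makes those estimates applicable is the transversality of the wave--Schr\"odinger interaction — the modulation count above. A secondary technical nuisance is the mismatch between the $L^2_t L^{2_*}_x$ norm ($2_*=\tfrac43$) appearing in the low-modulation part of $N^s_{\lambda_0}$ and the $L^2_{t,x}$ (respectively $L^1_t L^r_x$ with $r>\tfrac53>\tfrac43$) outputs of the two bilinear estimates; this is why the genuinely low-modulation contributions, which only arise when $\lambda_{\min}\approx\lambda_{\max}$ and so need no gain, must be split off and disposed of by a direct H\"older estimate as in \eqref{eqn:thm multilinear est gen:main high-high res}.
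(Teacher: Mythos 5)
Your treatment of the high--high--to--low region $\lambda_0\ll\lambda_1$ (non-resonant identity forcing high output modulation, Theorem \ref{thm:bilinear wave-schrodinger} for the $L^2_{t,x}$ piece, a pure-Besov H\"older bound, then a geometric mean) is essentially the paper's argument for that case. But there is a genuine gap in the remaining region: the fully resonant interaction $\lambda_2\lesssim\lambda_0\approx\lambda_1$, where the wave has low (or comparable) frequency and both the input $\PN{\lambda_1}u$ and the output $\PN{\lambda_0}(\cdot)$ sit near the paraboloid. Your first paragraph claims this is already covered by the estimates from Theorem \ref{thm:main schro est nonres} with a frequency gain and the Besov norm, and your last paragraph asserts that genuinely low-modulation contributions ``only arise when $\lambda_{\min}\approx\lambda_{\max}$ and so need no gain'' and can be disposed of by H\"older as in \eqref{eqn:thm multilinear est gen:main high-high res}. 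Both claims fail. The Schr\"odinger--wave resonance function $|\xi_0|^2-|\xi_1|^2\mp|\xi_2|=(\xi_0+\xi_1)\cdot\xi_2\mp|\xi_2|$ vanishes on a large set when $\xi_2$ (of size $\lambda_2\ll\lambda_1$) is nearly orthogonal to $\xi_1$, so all three factors can simultaneously have low modulation with $\lambda_{\min}=\lambda_2\ll\lambda_{\max}\approx\lambda_1$; there \eqref{eqn:thm besov gain:besov gain} demands both the gain $(\lambda_2/\lambda_1)^\theta$ and the factor $\|\chi v_{\lambda_2}\|_{L^\infty_t B^{4/r-2}_{r,\infty}}^\theta$, while \eqref{eqn:thm multilinear est gen:main high-high res} delivers neither (it only gives $\|v\|_{L^\infty_t L^2_x}$ with no gain, and no elementary H\"older can trade $L^2$ of $v_{\lambda_2}$ for $L^r$, $r>2$, since that would require $u$ in $L^2_tL^m_x$ with $m<4$, which $\underline{S}^s_{\lambda_1}$ does not control). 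Even when $\lambda_2\approx\lambda_0\approx\lambda_1$, where no gain is needed, plain H\"older does not produce the Besov factor, which is the whole point of the theorem (it is what kills the profile-decomposition remainder in (A2)).

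This missing case is exactly where the paper does its heaviest work: it reduces, via the sharp energy inequality \eqref{eqn:lem energy ineq:sharp}, to the dual pairing $\int \bar w_{\lambda_0}\chi v_{\lambda_2}u_{\lambda_1}$ with $\|w_{\lambda_0}\|_Z\le 1$, decomposes the Schr\"odinger--Schr\"odinger product into $\dot P_\mu(\bar w_{\lambda_0}u_{\lambda_1})$ with $\mu\lesssim\lambda_2$, and for each $\mu$ interpolates between Theorem \ref{thm:bilinear schrodinger} in $L^1_tL^q_x$ (paired with $\|\chi v_{\lambda_2}\|_{L^\infty_tL^r_x}$, which supplies the Besov factor) and Theorem \ref{thm:bilinear wave-schrodinger} in $L^2_{t,x}$ (which supplies the transversality gain $(\mu/\lambda_1)^{1/4}$), the exponent condition $4(\tfrac1r-\tfrac12)\theta+\tfrac{1-\theta}{4}>0$ ensuring summability in $\mu$ and the net gain $(\lambda_2/\lambda_1)^\theta$. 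You do rediscover the dual-pairing-plus-Theorem-\ref{thm:bilinear schrodinger} idea, but you deploy it only in the region $\lambda_0\ll\lambda_1\approx\lambda_2$, where it is not needed (your own second paragraph already settles that region), and without the $\mu$-decomposition and the interpolation against the wave--Schr\"odinger bound that are required to extract the $(\lambda_2/\lambda_1)^\theta$ gain in the genuinely resonant configuration. As written, the proposal therefore does not prove \eqref{eqn:thm besov gain:besov gain} in its hardest case.
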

\begin{proof}
The case $\lambda_0 \gg \lambda_1$ of both \eqref{eqn:thm besov gain:high-low gain} and \eqref{eqn:thm besov gain:besov gain} follows from \eqref{eqn:thm multilinear est gen:main high-low} and Lemma \ref{lem:energy ineq}. On the other hand, if $\lambda_0 \ll \lambda_1$, we have to work a little harder as the estimates in Theorem \ref{thm:main schro est nonres} only suffice when $s>0$ in this case. We first observe that in view of the non-resonant identity $P_{\lambda_0}(\chi v \PN{\lambda_1}u ) = \PF{\lambda_0}( \chi v_{\approx \lambda_1} \PN{\lambda_1} u_{\lambda_1})$, together with the definition of $N_{\lambda_0}$ and $S_{\lambda_1}$, Theorem \ref{thm:bilinear wave-schrodinger} implies that for any $0\les \alpha < \frac{1}{2}$
       $$
            \| P_{\lambda_0}(\chi v_{\approx \lambda_1} \PN{\lambda_1}u ) \|_{N^s_{\lambda_0}} \les \lambda_0^{s-1} \|\chi\|_{L^\infty} \| P_{\lambda_0}(v_{\approx \lambda_1} \PN{\lambda_1}u ) \|_{L^2_{t, x}} \lesa \Big( \frac{\lambda_0}{\lambda_1} \Big)^{s+\alpha} \|\chi\|_{L^\infty}\| v_{\approx \lambda_1} \|_{L^\infty_t L^2_x} \| u_{\lambda_1} \|_{\underline{S}^s_{\lambda_1}}.
       $$
On the other hand, an application of H\"older's inequality gives
       $$  \| P_{\lambda_0}(\chi v_{\approx \lambda_1} \PN{\lambda_1}u ) \|_{N^s_{\lambda_0}} \les \lambda_0^{s-1} \| P_{\lambda_0}(\chi v_{\approx \lambda_1} \PN{\lambda_1}u ) \|_{L^2_{t, x}}
                            \lesa \Big( \frac{\lambda_0}{\lambda_1}\Big)^{s+\frac{4}{r}-2} \| \chi v_{\approx \lambda_1} \|_{L^\infty_t B^{\frac{4}{r}-2}_{r, \infty}} \| u_{\lambda_1} \|_{\underline{S}^s_{\lambda_1}}.$$
Combining these bounds with \eqref{eqn:thm multilinear est gen:main low-high with gain}, both \eqref{eqn:thm besov gain:high-low gain} and \eqref{eqn:thm besov gain:besov gain} follow in the case $\lambda_0 \ll \lambda_1$.

It remains to consider the case $\lambda_0 \approx \lambda_1$. The estimate \eqref{eqn:thm besov gain:high-low gain} follows immediately from \eqref{eqn:thm multilinear est gen:main high-high res} and \eqref{eqn:thm multilinear est gen:main high-high nonres}. On the other hand, in view of the non-resonant bound  \eqref{eqn:thm multilinear est gen:main high-high nonres}, to prove \eqref{eqn:thm besov gain:besov gain} it suffices to show that for $ \lambda_2 \lesa \lambda_0 \approx \lambda_1$ we have
        $$ \lambda_0^s \| \PN{\lambda_0} \mc{I}_0[ \chi v_{{\lambda_2}} \PN{\lambda_1} u_{\lambda_1}] \|_{L^2_t L^4_x} \lesa \Big( \frac{{\lambda_2}}{\lambda_{1}}\Big)^\theta \| v_{{\lambda_2}} \|_{L^\infty_t L^2_x}^{1-\theta} \|\chi\|_{L^\infty}^{1-\theta} \| \chi v_{{\lambda_2}} \|_{L^\infty_t B^{\frac{d}{r}-2}_{r, \infty}}^\theta \| u_{\lambda_1} \|_{\underline{S}^s_{\lambda_1}}.$$
An application of the energy estimate in Lemma \ref{lem:energy ineq} together with the definition of the norm $\| \cdot \|_{\underline{S}^s_{\lambda_0}}$ shows that it is enough to prove the dual formulation
    \begin{equation}\label{eqn:thm res:main est}
        \begin{split}
        \Big| \int_{\RR^{1+4}} \bar{w}_{\lambda_0} \chi v_{\lambda_2} u_{\lambda_1} dx dt \Big|
                    &\lesa \Big( \frac{{\lambda_2}}{\lambda_0} \Big)^\theta  \| \chi v_{\lambda_2} \|_{L^\infty_t B^{\frac{4}{r}-2}_{r, \infty}}^\theta \|\chi\|_{L^\infty}^{1-\theta}\| v_{\lambda_2} \|_{L^\infty_t L^2_x}^{1-\theta} \| w_{\lambda_0} \|_Z \| u_{\lambda_1} \|_Z
        \end{split}
    \end{equation}
By multiplying $u$ and $w$ with a constant, we may assume that
      $$  \| u_{\lambda_1}\|_{Z} = \| w_{\lambda_0}\|_{Z} = 1.$$
Let $0<p<1$, $ \frac{5}{3}<q<2$, $r>2$, and $0<\theta<1$ such that
    $$ 1 = \frac{\theta}{p} + (1-\theta) \frac{3}{4}, \qquad \frac{1}{p} = \frac{1}{q} + \frac{1}{r}, \qquad 4\Big( \frac{1}{r} - \frac{1}{2}\Big) \theta + \frac{1-\theta}{4} > 0,$$
which is easily obtained by taking $r>2$ close enough to $2$ for any fixed $q\in(5/3,2)$, with $p$ and $\theta$ determined by the above equations (one potential choice is $\frac{1}{p} = 1 + \frac{1}{24}$, $\frac{1}{q} = \frac{1}{2} + \frac{1}{24} + \frac{1}{100}$, $\frac{1}{r} = \frac{1}{2} -\frac{1}{100}$, and $\theta = \frac{6}{7}$). The convexity of $L^p_x$ spaces implies that (recall that $v_1$ contains all frequencies less than $1$)
    \begin{align*}
         \Big| \int_{\RR^{1+4}} \bar{w}_{\lambda_0} \chi v_{\lambda_2} u_{\lambda_1} dx dt \Big|
            &\les \sum_{\substack{\mu \in 2^\ZZ \\ \lambda_2 -1 \lesa \mu \lesa \lambda_2}} \Big| \int_{\RR^{1+4}}  \dot{P}_{\mu} ( \overline{w}_{\lambda_0} u_{\lambda_1})  \chi \dot{P}_{\approx \mu} v_{\lambda_2} dx dt \Big|\\
                &\les \sum_{\substack{\mu \in 2^\ZZ \\ \lambda_2 -1 \lesa \mu \lesa \lambda_2}}  \|  \dot{P}_{\mu} ( \overline{w}_{\lambda_0} u_{\lambda_1})  \chi \dot{P}_{\approx \mu} v_{\lambda_2} \|_{L^1_t L^p_x}^\theta \|\chi\|_{L^\infty}^{1-\theta}\| \dot{P}_{\mu} ( \overline{w}_{\lambda_0} u_{\lambda_1}) \dot{P}_{\approx \mu}v_{\lambda_2}\|_{L^1_t L^{\frac{4}{3}}_x}^{1-\theta}.
    \end{align*}
Applying Theorem \ref{thm:bilinear schrodinger} we see that
    \begin{align*}
       \|  \dot{P}_{\mu} ( \overline{w}_{\lambda_0} u_{\lambda_1})  \chi  \dot{P}_{\approx \mu} v_{\lambda_2} \|_{L^1_t L^p_x}
        &\les  \|  \dot{P}_{ \mu} ( \overline{w}_{\lambda_0} u_{\lambda_1}) \|_{L^1_t L^q_x} \|  \chi v_{\lambda_2} \|_{L^\infty_t L^r_x} \\
        &\lesa {\mu}^{2-\frac{4}{q}} \|  \chi v_{{\lambda_2}} \|_{L^\infty_t L^r_x} \les {\mu}^{ 2-\frac{4}{q}} \lambda_2^{2 - \frac{4}{r}} \| \chi v_{\lambda_2} \|_{L^\infty_t B^{\frac{4}{r}-2}_{r,\infty}}.
    \end{align*}
Let $K_{\mu}(y)$ denote the kernel of the Fourier multiplier $\dot{P}_{\mu}$. Note that $\| K_{\mu} \|_{L^1(\RR^4)} \lesa 1$. Hence
Theorem \ref{thm:bilinear wave-schrodinger} together with the translation invariance of $L^p$ spaces gives
    \begin{align*}
      \|  \dot{P}_{\mu} ( \overline{w}_{\lambda_0} u_{\lambda_1})  \dot{P}_{\approx \mu} v_{\lambda_2} \|_{L^1_t L^\frac{4}{3}_x} &= \Big\| \int_{\RR^4} K_{\mu}(y) \overline{w}_{\lambda_0}(t, x-y) u_{\lambda_1}(t, x-y) dy  \dot{P}_{\approx \mu} v_{\lambda_2}(t, x) \Big\|_{L^1_t L^\frac{4}{3}_x} \\
            &\les \int_{\RR^4} |K_{\mu}(y)| \| u_{\lambda_1}(t, x-y) \dot{P}_{\approx \mu} v_{\lambda_2}(t,x) \|_{L^2_{t,x}} dy \| w_{\lambda_0} \|_{L^2_t L^4_x} \\
            &\lesa {\mu} \Big( \frac{{\mu}}{\lambda_1} \Big)^{\frac{1}{4}} \| v_{\lambda_2} \|_{L^\infty_t L^2_x}.
    \end{align*}
Since $4(\frac{1}{r}-\frac{1}{2})\theta + \frac{1-\theta}{4}>0$, we conclude that
    \begin{align*}
       \Big| \int_{\RR^{1+4}} \bar{w}_{\lambda_0}  \chi v_{\lambda_2} u_{\lambda_1} dx dt \Big|
                    &\lesa \sum_{\substack{\mu \in 2^\ZZ \\ \lambda_2 -1 \lesa \mu \lesa \lambda_2}}  \Big[ {\mu}^{ 2-\frac{4}{q}} \lambda_2^{2 - \frac{4}{r}} \|  \chi v_{\lambda_2} \|_{L^\infty_t \dot{B}^{\frac{4}{r}-2}_{r,\infty}} \Big]^\theta \Big[ \mu \Big( \frac{{\mu}}{\lambda_1} \Big)^{\frac{1}{4}}  \| v_{\lambda_2} \|_{L^\infty_t L^2_x} \Big]^{1-\theta}\\
                    &= \sum_{\substack{\mu \in 2^\ZZ \\ \lambda_2 -1 \lesa \mu \lesa \lambda_2}}  \mu^{4\theta(\frac{1}{r}-\frac{1}{2}) + \frac{1-\theta}{4}} \lambda_2^{4\theta(\frac{1}{2}-\frac{1}{r})} \lambda_1^{-\frac{1-\theta}{4}} \|  \chi v_{\lambda_2} \|_{L^\infty_t \dot{B}^{\frac{4}{r}-2}_{r,\infty}} ^\theta  \| v_{\lambda_2} \|_{L^\infty_t L^2_x}^{1-\theta}\\
                   &\lesa \Big( \frac{{\lambda_2}}{\lambda_1} \Big)^{\frac{1-\theta}{4}} \| v_{\lambda_2} \|_{L^\infty_t B^{\frac{4}{r}-2}_{r,\infty}}^\theta \| v_{\lambda_2} \|_{L^\infty_t L^2_x}^{1-\theta}
    \end{align*}
and therefore \eqref{eqn:thm res:main est} follows.
\end{proof}

As an easy corollary, we obtain the following.

\begin{corollary}\label{cor:summed up besov gain}
Let $d=4$ and $0\les s < 1$. There exist $0<\theta<1$ and $r>2$ such that for any $|\ell|< \theta$ and any free wave $v=e^{\pm it |\nabla|} f$ we have
    $$ \| \mc{I}_0( v u) \|_{S^s} \lesa \| v \|_{L^\infty_t B^{\frac{4}{r}-2+\ell}_{r, \infty}}^\theta \|  v \|_{L^\infty_t H^\ell}^{1-\theta} \| u \|_{\underline{S}^{s-\ell}}. $$
\end{corollary}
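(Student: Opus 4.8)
The plan is to deduce Corollary \ref{cor:summed up besov gain} from Theorem \ref{thm:besov gain} by a Littlewood--Paley decomposition in all three frequencies and an $\ell^2$-summation, exploiting the off-diagonal decay $(\lambda_{\min}/\lambda_{\max})^\theta$ together with the loss $|\ell|<\theta$ to absorb the shift in the regularity index. First I would write $v = \sum_{\lambda_2} v_{\lambda_2}$ and $u = \sum_{\lambda_1} u_{\lambda_1}$, and expand $\mc{I}_0(vu) = \sum_{\lambda_0}\sum_{\lambda_1,\lambda_2}\mc{I}_0[P_{\lambda_0}(v_{\lambda_2}u_{\lambda_1})]$. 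Applying \eqref{eqn:thm besov gain:besov gain} with $\chi\equiv 1$ (noting that $\widetilde{v_{\lambda_2}}$ is supported on $\{|\tau|=|\xi|\lesa\lr\xi\}$, so the hypothesis on $\supp\widetilde{\chi v}$ is satisfied) gives, for each frequency triple,
\[
 \| \mc{I}_0[P_{\lambda_0}(v_{\lambda_2} u_{\lambda_1})] \|_{S^s_{\lambda_0}} \lesa \Big(\frac{\lambda_{\min}}{\lambda_{\max}}\Big)^\theta \| v_{\lambda_2} \|_{L^\infty_t B^{\frac4r-2}_{r,\infty}}^\theta \| v_{\lambda_2} \|_{L^\infty_t L^2_x}^{1-\theta} \| u_{\lambda_1} \|_{\underline{S}^s_{\lambda_1}}.
\]
Since $v_{\lambda_2}$ is a single Littlewood--Paley block, $\| v_{\lambda_2} \|_{L^\infty_t B^{\frac4r-2}_{r,\infty}} \approx \lambda_2^{\frac4r-2}\| v_{\lambda_2} \|_{L^\infty_t L^r_x} \approx \lambda_2^{-\ell}\| v_{\lambda_2} \|_{L^\infty_t B^{\frac4r-2+\ell}_{r,\infty}}$ and $\| v_{\lambda_2} \|_{L^\infty_t L^2_x} \approx \lambda_2^{-\ell}\| v_{\lambda_2} \|_{L^\infty_t H^\ell}$, while $\| u_{\lambda_1} \|_{\underline{S}^s_{\lambda_1}} \approx \lambda_1^{\ell}\| u_{\lambda_1} \|_{\underline{S}^{s-\ell}_{\lambda_1}}$; one checks that $s-\ell\in[0,1)$ is guaranteed once $|\ell|<\theta$ is small enough (shrinking $\theta$ if necessary) so that Theorem \ref{thm:besov gain} still applies.

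The heart of the argument is then the $\ell^2$-summation. Inserting the rescalings above, the summand carries the weight $(\lambda_{\min}/\lambda_{\max})^\theta (\lambda_1/\lambda_2)^{\ell}$, and since $|\ell|<\theta$ this is bounded by $(\lambda_{\min}/\lambda_{\max})^{\theta-|\ell|}$, i.e.\ still a genuine off-diagonal decay in the three variables $\lambda_0,\lambda_1,\lambda_2$. I would then sum first in $\lambda_0$ (the output frequency), using that the $S^s$-norm is the $\ell^2_{\lambda_0}$-aggregate of the $S^s_{\lambda_0}$-pieces: by Schur's test / Young's inequality on $2^\ZZ$, the map
\[
 (a_{\lambda_1,\lambda_2}) \mapsto \Big( \sum_{\lambda_1,\lambda_2} \Big(\tfrac{\lambda_{\min}}{\lambda_{\max}}\Big)^{\theta-|\ell|} a_{\lambda_1,\lambda_2} \Big)_{\lambda_0}
\]
is bounded $\ell^1\to\ell^2$ (indeed $\to\ell^\infty$ with room to spare) uniformly, and then a further Cauchy--Schwarz in $(\lambda_1,\lambda_2)$ against the same geometric weights reduces everything to
\[
 \| \mc{I}_0(vu) \|_{S^s} \lesa \Big( \sum_{\lambda_2} \big(\| v_{\lambda_2} \|_{L^\infty_t B^{\frac4r-2+\ell}_{r,\infty}}^\theta \| v_{\lambda_2} \|_{L^\infty_t H^\ell}^{1-\theta}\big)^2\Big)^{1/2} \Big(\sum_{\lambda_1}\| u_{\lambda_1}\|_{\underline{S}^{s-\ell}_{\lambda_1}}^2\Big)^{1/2}.
\]
The second factor is exactly $\| u \|_{\underline{S}^{s-\ell}}$. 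For the first factor I would apply H\"older in $\lambda_2$ with exponents $1/\theta$ and $1/(1-\theta)$ to bound it by $\| v \|_{L^\infty_t B^{\frac4r-2+\ell}_{r,\infty}}^\theta \big(\sum_{\lambda_2}\| v_{\lambda_2}\|_{L^\infty_t H^\ell}^2\big)^{(1-\theta)/2}$; but $\big(\sum_{\lambda_2}\| v_{\lambda_2}\|_{L^\infty_t H^\ell}^2\big)^{1/2}$ needs to be controlled by $\| v \|_{L^\infty_t H^\ell}$, which for a free wave $v=e^{\pm it|\nabla|}f$ follows since $\|v_{\lambda_2}(t)\|_{H^\ell}=\|\dot P_{\lambda_2}f\|_{H^\ell}$ is time-independent and square-summable in $\lambda_2$ with sum $\|f\|_{H^\ell}=\|v\|_{L^\infty_t H^\ell}$ --- here the freeness of $v$ is used in an essential way (a general function in $L^\infty_t H^\ell$ would only give $\ell^2_{\lambda_2}L^\infty_t$, not $L^\infty_t\ell^2_{\lambda_2}$, but that direction is the favourable one anyway by Minkowski). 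Similarly the $L^\infty_t\ell^\infty_{\lambda_2}$ Besov norm is handled directly.

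I expect the main obstacle to be purely bookkeeping rather than conceptual: one must verify that the exponent arithmetic closes, namely that the geometric gain $\theta$ from Theorem \ref{thm:besov gain} strictly dominates the regularity shift $|\ell|$ after \emph{both} the $B^{\frac4r-2}_{r,\infty}$-piece and the $L^2$-piece of $v$ are re-indexed to level $\ell$, and that the resulting three-index kernel $(\lambda_{\min}/\lambda_{\max})^{\theta-|\ell|}$ is Schur-summable simultaneously in $\lambda_0$ (to recover the $\ell^2$ defining $S^s$) and in the pair $(\lambda_1,\lambda_2)$ (to split the bilinear expression). A minor subtlety is the interaction with the inhomogeneous vs.\ homogeneous Littlewood--Paley conventions --- $u$ and the output use the inhomogeneous decomposition $P_\lambda$ with $\lambda\in 2^\NN$, so the lowest block $\lambda_1\approx 1$ or $\lambda_0\approx 1$ must be treated together with the tail $\mu\lesssim 1$ frequencies of $v$, but Theorem \ref{thm:besov gain} is already stated for $\lambda_j\in 2^\NN$ and allows $\lambda_2$ down to $1$ while absorbing the genuinely low frequencies of $v$ into $v_1$, so no separate argument is needed there.
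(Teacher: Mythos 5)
Your argument is correct and is essentially the paper's intended derivation: the paper states Corollary \ref{cor:summed up besov gain} without proof as an immediate consequence of Theorem \ref{thm:besov gain}, namely applying \eqref{eqn:thm besov gain:besov gain} with $\chi\equiv 1$ to each frequency triple, re-indexing the dyadic weights so that the shift by $\ell$ costs at most $(\lambda_{\max}/\lambda_{\min})^{|\ell|}$, and summing via Schur/Cauchy--Schwarz using the remaining gain $(\lambda_{\min}/\lambda_{\max})^{\theta-|\ell|}$, with the free-wave property giving the $\ell^2_{\lambda_2}$ control of $\|v_{\lambda_2}\|_{L^\infty_t H^\ell}$ exactly as you note.
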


\begin{remark}
Although the statement of Corollary \ref{cor:summed up besov gain} gives the crucial Besov gain required in later sections, it has the unfortunate problem that it only gives control over the weaker space $S^s$, but requires that we have $u \in \underline{S}^s$ in the stronger space. There are a number ways to resolve this difficulty. One option is to essentially iterate the equation twice, since Theorem \ref{thm:main schro est nonres} roughly shows that $\mc{I}_0$ maps $S^s$ to $\underline{S}^s$.

An alternative is approach is to exploit complex interpolation. More precisely, note that we can write Theorem \ref{thm:main schro est nonres} and Corollary \ref{cor:summed up besov gain} in the form
        $$ \| \mc{I}_0 v \|_{B(S^s \to \underline{S}^s)} \lesa \| v \|_{L^\infty_t L^2_x}, \qquad \| \mc{I}_0 v \|_{B(\underline{S}^s \to S^s)} \lesa \| v \|_{L^\infty_t B^{\frac{4}{r}-2}_{r, \infty}}^\theta \|  v \|_{L^\infty_t L^2_x}^{1-\theta}$$
for any free wave $v$. As observed above, we only have the crucial factor $\| v \|_{L^\infty_t B^{\frac{4}{r}-2}_{r, \infty}}$ when we map the strong space $\underline{S}^s$ to the weak space $S^s$. This deficiency can be resolved with a small twist in the function spaces. More precisely, define the complex interpolation space
\EQ{
 \pt S^s_{1/2}:=[S^s,\underline{S}^s]_{1/2}.}
Then the above bounds immediately imply that we have
    $$ \| \mc{I}_0 v \|_{B( S^s_{1/2} \to S^s_{1/2})} \lesa \| v \|_{L^\infty_t B^{\frac{4}{r}-2}_{r, \infty}}^{\frac{1}{2} \theta} \| v \|_{L^\infty_t L^2_x}^{1-\frac{1}{2} \theta}. $$
This has the important advantage that we map $S^s_{1/2}$ to $S^s_{1/2}$ but retain a power of the Besov norm.
\end{remark}

\subsection{Decay by spatial separation}\label{subsec:spatial-sep}
For the uniform Strichartz estimate in the non-radial case, we need an
extra decay for potentials separating in space. We use the notation
$f_a(x)=f(x-a)$ for translates of $f$ (only within this subsection).
\begin{lemma} \label{lem:x sep}
Let $d\ge 3$, $f,g\in L^{d/2}(\R^d)$ and $\e>0$. Then there exists $D>0$ such that for any $a,b\in\R^d$ satisfying $|a-b|\ge D$, and any $F\in L^2_tL^{2^*}_x$, we have
\EQ{
 \|f_a\mc{I}_0[g_bF]\|_{L^2_tL^{2_*}_x} \le \e \|F\|_{L^2_tL^{2^*}_x}.}
\end{lemma}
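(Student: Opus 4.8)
The plan is to reduce to bounded, compactly supported $f$ and $g$, then split the Duhamel integral according to whether the time lag $t-s$ is large or small relative to $|a-b|^2$, treating the large--lag part by the dispersive decay of the Schr\"odinger kernel and the small--lag (oscillatory) part by a frequency--localised pseudolocality argument.

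\emph{Reduction.} By translation invariance we may take $b=0$; set $\rho=|a|$. Replacing $F$ by $\mathbbold{1}_{\{t>t_0\}}F$ only decreases its norm, so we may take $t_0=-\infty$, i.e. $\mathcal{I}_0[F](t)=-i\int_{-\infty}^t e^{i(t-s)\Delta}F(s)\,ds$. Since $\tfrac{1}{2_*}=\tfrac{1}{2^*}+\tfrac{2}{d}=\tfrac{1}{2^*}+\tfrac{1}{d/2}$, H\"older's inequality and the endpoint Strichartz estimate \eqref{eqn:stand stri} give the crude uniform bound
\[ \|f_a\,\mathcal{I}_0[g_bF]\|_{L^2_tL^{2_*}_x}\lesssim\|f\|_{L^{d/2}_x}\|g\|_{L^{d/2}_x}\|F\|_{L^2_tL^{2^*}_x}. \]
Given $\varepsilon>0$, write $f=f'+f''$ with $f'\in L^\infty$ of compact support and $\|f''\|_{L^{d/2}_x}$ arbitrarily small (truncate first the amplitude, then the support), and likewise $g=g'+g''$; the contributions of $f''$ and $g''$ are absorbed by the crude bound, so it suffices to treat $f,g\in L^\infty$ supported in a fixed ball $B(0,M)$ (all constants below are allowed to depend on $M$, $\|f\|_\infty$, $\|g\|_\infty$).

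\emph{Large time lag.} Writing $\mathcal{I}_0[gF](t)=-i\int_0^\infty e^{i\sigma\Delta}[gF(t-\sigma)]\,d\sigma$, the dispersive estimate $|e^{i\sigma\Delta}h(x)|\lesssim\sigma^{-d/2}\|h\|_{L^1_x}$, H\"older in the $y$--variable (using $g\in L^{2_*}_x$), and Young's inequality in $\sigma$ --- note $\int_{\rho^2}^\infty\sigma^{-d/2}\,d\sigma\approx\rho^{2-d}$, which is finite and tends to $0$ precisely because $d\ge3$ --- give
\[ \Big\|f_a\int_{\sigma\gtrsim\rho^2}e^{i\sigma\Delta}[gF(t-\sigma)]\,d\sigma\Big\|_{L^2_tL^{2_*}_x}\lesssim\rho^{2-d}\,\|F\|_{L^2_tL^{2^*}_x}. \]

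\emph{Small time lag (the oscillatory regime).} It remains to bound $\|f_a\,\mathcal{A}\|_{L^2_tL^{2_*}_x}$ with $\mathcal{A}(t)=\int_0^{C\rho^2}e^{i\sigma\Delta}[gF(t-\sigma)]\,d\sigma$; this is the heart of the matter. Decompose $F=\sum_{\Lambda\in2^\Z}\dot{P}_\Lambda F$ and set $\mathcal{A}_\Lambda(t)=\int_0^{C\rho^2}e^{i\sigma\Delta}[g\,\dot{P}_\Lambda F(t-\sigma)]\,d\sigma$. Because $F\mapsto f_a\,\mathcal{A}$ is a convolution in $t$ and $\widehat g$ is Schwartz, the pieces $f_a\,\mathcal{A}_\Lambda$ have spatial Fourier supports concentrated in $\{|\xi|\approx\Lambda\}$ up to rapidly decaying tails, so Littlewood--Paley theory and Minkowski's inequality (which goes the right way since $2_*\le2\le2^*$) reduce matters to
\[ \|f_a\,\mathcal{A}_\Lambda\|_{L^2_tL^{2_*}_x}\lesssim\omega(\rho)\,\|\dot{P}_\Lambda F\|_{L^2_tL^{2^*}_x}\quad\text{with }\omega(\rho)\to0\text{ \emph{uniformly in }}\Lambda, \]
after which $\big(\sum_\Lambda\|\dot{P}_\Lambda F\|^2\big)^{1/2}\lesssim\|F\|$ closes the estimate. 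For fixed $\Lambda$ one uses that the Schr\"odinger flow of frequency--$\Lambda$ data supported in $B(0,M)$ travels with group velocity $\approx\Lambda$: by non--stationary phase in the frequency variable it is negligible on $B(a,M)$ (rapid decay in $\Lambda\rho$) for all time lags $\sigma$ outside a short window around $\sigma\approx\rho/\Lambda$; for $\sigma\gtrsim\rho/\Lambda$ one additionally has the dispersive gain $\sigma^{-d/2}\lesssim(\rho/\Lambda)^{-d/2}$, combined with the endpoint argument of Keel--Tao \cite[Lemma 4.1]{Keel1998} and the Whitney decomposition \eqref{eq:whitney decomp} exactly as in the proofs of Theorems \ref{thm:bilinear wave-schrodinger} and \ref{thm:bilinear schrodinger} (to circumvent the logarithmic divergence of the naive $L^1_\sigma$ bound); and inside the window, where the flow actually sweeps over $B(a,M)$, its support has volume $\approx\rho^d$ while $f_a$ is supported in a ball of volume $M^d$, so multiplication by $f_a$ should gain $\approx(M/\rho)^{d/2}$ --- rigorously an inhomogeneous bilinear ($L^2_{t,x}$) restriction estimate for the Schr\"odinger equation with the spatial separation built in, of the same flavour as, but simpler than, Theorem \ref{thm:bilinear wave-schrodinger}. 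The very low frequencies $\Lambda\lesssim1/\rho$ are disposed of by the crude bounds, their total contribution being already $\lesssim\rho^{2-d}$.

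The main obstacle is the last point --- and within it, the narrow window in which the dispersed frequency--$\Lambda$ solution passes over the far ball $B(a,M)$: there is no gain from non--stationary phase, the dispersive decay alone is useless (indeed harmful) for $\Lambda\gg\rho$, and one must genuinely exploit the spreading of the Schr\"odinger flow, i.e. a bilinear--restriction--type estimate with the separation incorporated, while keeping it uniform in $\Lambda$ so that the Littlewood--Paley summation converges. (An alternative is to take the Fourier transform in $t$ and reduce to $\sup_\tau\|f_a(\Delta+\tau+i0)^{-1}g_b\|_{L^{2^*}_x\to L^{2_*}_x}\to0$ as $|a-b|\to\infty$, which follows from the uniform Sobolev estimate of Kenig--Ruiz--Sogge together with the pointwise asymptotics of the free resolvent kernel between separated regions; but this reduction requires care, since Plancherel in time interacts awkwardly with the spatial mixed norms $L^2_tL^{2^*}_x$ and $L^2_tL^{2_*}_x$.)
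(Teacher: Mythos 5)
There is a genuine gap, and you identify it yourself: the entire difficulty of the lemma is concentrated in your ``narrow window'' step, and that step is not proved. For the time lags $\sigma\approx\rho/\Lambda$ in which the frequency-$\Lambda$ part of the flow actually sweeps over $B(a,M)$, you assert a gain $\approx(M/\rho)^{d/2}$ from the spreading of the wave packet, but the rigorous statement you would need --- an inhomogeneous bilinear $L^2_tL^{2_*}_x$ estimate with the \emph{spatial} separation built in, valid for forcing in the dual endpoint space and, crucially, \emph{uniform in $\Lambda$} so that the Littlewood--Paley sum over all dyadic $\Lambda\gtrsim 1/\rho$ closes --- is nowhere established, and neither the Keel--Tao/Whitney machinery of Theorems \ref{thm:bilinear wave-schrodinger}--\ref{thm:bilinear schrodinger} nor the dispersive bound (which, as you note, is useless when $\Lambda\gg\rho$) supplies it. The non-stationary-phase step outside the window also needs care (your reduction only truncates amplitude and support, so $\widehat g$ is not rapidly decaying, and multiplying $\dot P_\Lambda F$ by $g$ smears the frequency support), but those are repairable technicalities; the window estimate is the missing idea. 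The alternative resolvent route you mention ($\sup_\tau\|f_a(\Delta+\tau+i0)^{-1}g_b\|\to0$) is likewise only flagged: Plancherel in $t$ is not available for $L^2_t$-valued $L^{2^*}_x/L^{2_*}_x$ norms, so that reduction is itself an unproved step.

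For comparison, the paper's proof avoids any uniform-in-frequency analysis. After approximating $f,g$ in $L^{d/2}$ by nice functions with compactly supported Fourier transforms, the high frequencies of $F$ are removed first: $f_a\mc{I}_0[g_bF_{>\lambda}]$ then has spatial frequency $\gtrsim\lambda$, so one gains $\lambda^{-1}\nabla$, and the derivative of the Duhamel term is controlled by a global local-smoothing estimate on unit cubes. Long time lags are removed by the $|t-s|^{-d/2}$ dispersive bound (giving $L^{-d/2+1}$), exactly as in your large-lag step. What remains has bounded frequency and bounded time lag, and there the separation $|a-b|\ge D$ is exploited by the exact commutation identity $x\,\mc{U}_0(t)=\mc{U}_0(t)(x-2it\nabla)$, which produces a factor $D^{-1}$ times quantities of size $O(L\lambda^2)$ --- small for $D$ large with $L,\lambda$ fixed. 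If you want to salvage your outline, the cleanest fix is to adopt this ordering: truncate in frequency \emph{before} localising in time, so that no estimate uniform in $\Lambda$ (and no new bilinear restriction theorem) is required.
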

\begin{proof}
First, the double endpoint Strichartz with H\"older implies
\EQ{
 \|f_a\mc{I}_0[g_bF]\|_{L^2_tL^{2_*}} \pt\le \|f\|_{L^{d/2}}\|\mc{I}_0[g_bF]\|_{L^2_tL^{2^*}_x}
 \pr\lec \|f\|_{L^{d/2}}\|g_bF\|_{L^2_tL^{2_*}_x}
 \lec \|f\|_{L^{d/2}}\|g\|_{L^{d/2}}\|F\|_{L^2_tL^{2^*}_x},}
which allows us to approximate $f$ and $g$ in $L^{d/2}$ by Schwartz functions.

Second, in order to dispose of the high frequency of $F$, we use the local smoothing estimate in a global form:
\EQ{
 \|\na\mc{I}_0 f\|_{\ell^\I_\alpha L^2_{t, Q_\alpha}} \lec \|f\|_{\ell^1_\alpha L^2_{t,Q_\alpha}},}
where $\{Q_\alpha\}_{\alpha\in\Z^d}$ is the decomposition of $\R^d$
into  unit cubes.
By approximating $f,g$ in $L^{d/2}$, we may assume that $\hat f,\hat g\in C^\I(\R^d)$ with $\supp\hat f,\supp\hat g\subset\{|\x|<R\}$ for some $R\in(1,\I)$. Then for $\forall\la\gg R$, we have
\EQ{
 \supp\mc{F} f_a\mc{I}_0 [g_bF_{>\la}] \subset\{|\x|>\la/2\},}
so that we can gain as follows, with $u:=\mc{I}_0[g_b F_{>\la}]$,
\EQ{
 \pt\|f_a u\|_{L^2_tL^{2_*}_x}
 \lec \la^{-1}\|\na (f_a u)\|_{L^2_tL^{2_*}_x}
 \pr\lec \la^{-1}\|\na f\|_{L^{d/2}}\|u\|_{L^2_tL^{2^*}_x} +
 \la^{-1}\|f_a\|_{\ell^2_\alpha L^d(Q_\alpha)}\|\na u\|_{\ell^\I_\alpha L^2_{t,Q_\alpha}}
 \pr\lec \la^{-1}\|\na f\|_{L^{d/2}}\|g\|_{L^{d/2}}\|F\|_{L^2_tL^{2^*}_x}
 \pn+ \la^{-1}\|f\|_{\ell^2_\alpha L^d(Q_\alpha)}\|\na u\|_{\ell^\I_\alpha L^2_{t,Q_\alpha}}, }
where the last norm is bounded by using the above smoothing estimate
\EQ{
 \|\na u\|_{\ell^\I_\alpha L^2_{t,Q_\alpha}}
 \lec \|g_b F_{>\la}\|_{\ell^1_\alpha L^2_{t,Q_\alpha}} \pt\lec \|g\|_{\ell^{2_*}_\alpha L^d_x(Q_\alpha) }\|F_{>\la}\|_{\ell^{2^*}_\alpha L^2_tL^{2^*}_x(Q_\alpha)}
 \pr\lec  \|g\|_{\ell^{2_*}_\alpha L^d_x(Q_\alpha) }\|F\|_{L^2_tL^{2^*}_x}. }
Thus we can dispose of the high frequency contribution from $F_{>\la}$ for some large $\la$.

Third, in order to dispose of long time interactions, we exploit the dispersive decay estimate (in $L^\I_x$).
Decomposing the Duhamel integral by
\EQ{
 \mc{I}_0f \pt= \int_{t-L}^t \mc{U}_0(t-s)f(s)ds + \int_0^{t-L} \mc{U}_0(t-s)f(s)ds
 =: \mc{I}_{<L}f+ \mc{I}_{>L}f,}
we have, using the dispersive estimate, as well as Young and H\"older,
\EQ{
 \pt\|f_a\mc{I}_{>L}g_b F\|_{L^2_tL^{2_*}_x}
 \pr\lec \|f\|_{L^{2_*}} \|\int_{|t-s|>L} |t-s|^{-d/2}\|g_b F(s)\|_{L^1_x} ds\|_{L^2_t}
 \pr\lec \|f\|_{L^{2_*}}  \||t|^{-d/2}\|_{L^1(|t|>L)} \|g\|_{L^{2_*}} \|F\|_{L^2_tL^{2^*}_x}
 \pn\lec L^{-d/2+1}\|f\|_{L^{2_*}}\|g\|_{L^{2_*}} \|F\|_{L^2_tL^{2^*}_x}, }
so that we can dispose of $\mc{I}_{>L}$ for some large $L>1$.

Thus the problem is reduced to the decay of
\EQ{
 f_a\mc{I}_{<L}[g_bF_{<\la}].}
Now that both the traveling time and the frequency (group velocity) are bounded, we can exploit the spatial separation $|a-b|\to\I$. An easy way is to use
\EQ{
  x\mc{U}_0(t) = \mc{U}_0(t)(x-2it\na).}
By another approximation of $f,g\in L^{d/2}$, we may now assume that $\supp f,\supp g\subset\{|x|<S\}$ for some $S\in(0,\I)$. If $|a-b|\ge D\gg S$, then we have, with $u:=\mc{I}_{<L}[g_bF_{<\la}]$,
\EQ{
 \|f_au\|_{L^2_tL^{2_*}} \pt\lec D^{-1}\|(x-b)f(x-a)u\|_{L^2_tL^{2_*}}
 \pr=D^{-1}\|f(x-a)\mc{I}_{<L}[(x-b-2i(t-s)\na)g(x-b)F_{<\la}]\|_{L^2_tL^{2_*}}
 \pr\lec D^{-1}\|f\|_{L^{d/2}}\|xg\|_{L^{d/2}}\|F_{<\la}\|_{L^2_tL^{2^*}_x}
 +D^{-1}\|f\|_{L^{d/2}}\|\mc{I}_{<L}[(t-s)\na (g_bF_{<\la})]\|_{L^2_tL^{2^*}_x}, }
where the last norm is bounded by using Sobolev, Young and H\"older
\EQ{
 \|\int_{|t-s|<L}\|\De (g_bF_{<\la}(s))\|_{L^2_x} ds \|_{L^2_t}
 \pt\le \|1\|_{L^1(|t|<L)} \|g\|_{W^{2,d}_x} \|F_{<\la}\|_{L^2_tW^{2,2^*}_x}
 \pr\lec L\la^2 \|g\|_{W^{2,d}_x}\|F\|_{L^2_tL^{2^*}_x}, }
so that we can make its contribution as small as we like by $D\to\I$.
\end{proof}

\section{Uniform Strichartz estimates}\label{sec:uniform-str}
In this section, we prove a uniform Strichartz estimate for wave potential in $L^2(\R^4)$ below the ground state threshold $\|W^2\|_{L^2(\R^4)}$. $W$ is the ground state solution to the nonlinear Schr\"odinger equation on $\R^4$:
\EQ{
 W(x):=(1+|x|^2/8)^{-1}, \pq -\De W= W^3,}
or the Aubin-Talenti function, the unique maximizer of the Sobolev inequality $\|\fy\|_{L^4(\R^4)}\le C\|\nabla\fy\|_{L^2(\R^4)}$. It gives rise to the family of static solutions $(u,v)=(W_\la,-W_\la^2)$, where $W_\la(x):=\la W(\la x)$ is the invariant scaling in $\dot H^1(\R^4)$ and in $L^4(\R^4)$.

\begin{theorem} \label{thm:unif Stz}
Let $d=4$, $0\les s<1$, and $0<B<\|W^2\|_{L^2(\R^4)}$. There is a constant $C\in(0,\I)$, such that for any $g\in L^2(\R^4)$ with $\|g\|_{L^2}\le B$, we have the Strichartz estimate with the potential $V:=e^{it|\nabla|}g$
\EQ{ \label{unif Stz}
 \|u\|_{\underline{S}^s} \le C\BR{\inf_{t\in\R}\|u(t)\|_{H^s} + \|(i\p_t+\De-\Re V)u\|_{N^s}}. }
\end{theorem}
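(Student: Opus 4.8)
I would argue by contradiction via a profile decomposition for the free wave potential, in the spirit of \cite[Theorem~1.5]{Guo2018} but with the radial Strichartz input replaced by the bilinear Fourier restriction estimates of Sections~\ref{sec:fre}--\ref{sec:proof of res gain}. If the constant failed for some $0<B<\|W^2\|_{L^2}$ and some $0\le s<1$, there would be $g_n\in L^2(\R^4)$ with $\|g_n\|_{L^2}\le B$, times $t_n$ nearly realising $\inf_t\|u_n(t)\|_{H^s}$, and functions $u_n$ with $\|u_n\|_{\underline S^s}=1$ while $\|u_n(t_n)\|_{H^s}+\|F_n\|_{N^s}\to0$, where $V_n:=e^{it|\nabla|}g_n$ and $F_n:=(i\p_t+\De-\Re V_n)u_n$. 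The Duhamel formula from $t_n$ together with the energy inequality (Lemma~\ref{lem:energy ineq}, summed in $\ell^2$ over dyadic frequencies) gives $\|u_n\|_{\underline S^s}\lesa\|u_n(t_n)\|_{H^s}+\|F_n\|_{N^s}+\|\Re V_n\,u_n\|_{N^s}$, so everything comes down to showing $\|\Re V_n\,u_n\|_{N^s}=o(1)+\de\,\|u_n\|_{\underline S^s}$ with $\de>0$ arbitrarily small; the profile decomposition of $(g_n)$ reduces this to estimating the interaction of $u_n$ with each individual profile and with the remainder.

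\textbf{Profile decomposition and the routine pieces.} Apply the linear profile decomposition to $(g_n)$ in $L^2(\R^4)$ with respect to the symmetry group generated by spatial translations and parabolic dilations (cf.\ Subsection~\ref{ss:prof decomp} and \cite{Guo2018}): after passing to a subsequence, $g_n=\sum_{1\le j<J}\Lambda_n^j\phi^j+w_n^J$, with the profiles pairwise asymptotically orthogonal in scale or in position, $\sum_j\|\phi^j\|_{L^2}^2\le B^2<\|W^2\|_{L^2}^2$, and $\limsup_n\|e^{it|\nabla|}w_n^J\|\to0$ as $J\to\I$ in a suitable negative-regularity Besov/Strichartz norm. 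A profile whose frequency scale stays bounded yields a free wave with genuine dispersive decay: its contribution is handled perturbatively by Theorem~\ref{thm:main schro est nonres} (cashing in the dispersion through the $L^2_tH^{\frac{d}{2}-3-\epsilon}_x$ term), after splitting $\R$ into boundedly many time intervals, with no size restriction. The tail $w_n^J$ decays only in a negative Besov norm, and Corollary~\ref{cor:summed up besov gain} (that is, Theorem~\ref{thm:besov gain}) is built precisely to absorb it, producing a vanishing factor $\|e^{it|\nabla|}w_n^J\|_{L^\infty_tB^{\frac4r-2+\ell}_{r,\infty}}^\theta$. Distinct profiles are asymptotically separated in space or scale, so Lemma~\ref{lem:x sep} together with the almost-orthogonality decouples their cross-interactions. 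It therefore remains to rule out a \emph{single concentrating} profile.

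\textbf{The concentrating profile: the supersonic limit.} This is the core case. Rescaling $u_n$ parabolically so that the profile's characteristic frequency becomes $O(1)$, the potential takes the form $V_n=e^{i\alpha_n t|\nabla|}\phi$ with $\alpha_n\to0$, while the sub-threshold constraint is preserved, $\|\phi\|_{L^2}\le B<\|W^2\|_{L^2}$. The structural fact driving the estimates is that the Schr\"odinger--wave product $\Re V_n\,u_n$ is, modulo rapidly decaying errors, localised to the frequency of the wave, which is exactly what makes the transversal bilinear estimates of Theorems~\ref{thm:bilinear wave-schrodinger}--\ref{thm:bilinear schrodinger} applicable (these replace the radial Strichartz gain of \cite{Guo2018}). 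As $\alpha_n\to0$ one compares with the \emph{static} problem $i\p_tu+\De u-\Re(\phi)\,u=F$; the discrepancies $e^{i\alpha_n t|\nabla|}\phi-\phi$ again decay only in negative Besov norms and are absorbed by Theorem~\ref{thm:besov gain}. For the static real potential $\Re\phi$ with $\|\Re\phi\|_{L^2}\le\|\phi\|_{L^2}<\|W^2\|_{L^2}$, the operator $-\De+\Re\phi$ is strictly positive with no zero-energy resonance: by the sharp Sobolev inequality \eqref{eq:cs} one has $\langle(-\De+\Re\phi)\psi,\psi\rangle\ge(1-\|\Re\phi\|_{L^2}/\|W^2\|_{L^2})\|\na\psi\|_{L^2}^2$, the threshold resonance $W$ appearing exactly when $\phi=-W^2$, so a uniform Strichartz estimate holds for it (by the standard Kato smoothing/perturbation machinery, or within the present framework by one more application of the bilinear estimates and Lemma~\ref{lem:energy ineq}). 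Transporting this control back up the profile decomposition forces $\|u_n\|_{\underline S^s}\to0$, contradicting the normalisation.

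\textbf{Main obstacle.} The delicate part is the concentrating-profile/supersonic-limit analysis: one must simultaneously (i) run the supersonic approximation $e^{i\alpha_n t|\nabla|}\phi\to\phi$ in a sense strong enough to transfer the Strichartz bound, while controlling error terms that decay only in negative Besov (not Sobolev) norms --- this is exactly what forces the $\ell^\infty$-Besov, high--low-gain refinement of Theorem~\ref{thm:besov gain}, which itself rests on the \emph{inhomogeneous} bilinear restriction estimate of Theorem~\ref{thm:bilinear wave-schrodinger} --- and (ii) keep the argument in the strong space $\underline S^s$ rather than merely $S^s$, for which one works in the interpolation space $S^s_{1/2}=[S^s,\underline S^s]_{1/2}$ from the remark following Corollary~\ref{cor:summed up besov gain}, so as to retain a power of the small Besov factor while mapping $S^s_{1/2}$ to itself. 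A secondary difficulty is the compactness bookkeeping: showing, uniformly over $\|g\|_{L^2}\le B$, that no bounded-scale profile or tail can carry a non-vanishing fraction of the interaction, so that a putative failure of the estimate genuinely forces concentration into a single bubble.
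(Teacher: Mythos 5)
Your proposal follows the paper's broad strategy (profile decomposition of the free-wave potential, Besov-gain bilinear estimates for the remainder and cross terms, reduction of a concentrating profile to a static potential below the ground state), but the central reduction is not viable as stated, and the combination step is missing. You claim that everything comes down to showing $\|\Re V_n\,u_n\|_{N^s}=o(1)+\de\,\|u_n\|_{\underline S^s}$ with $\de$ arbitrarily small. This cannot be achieved: the potential is large ($\|g_n\|_{L^2}$ may be close to $\|W^2\|_{L^2}$), and the diagonal interaction of $u_n$ with a single profile is genuinely of size $\|V^j\|_{L^\infty_tL^2_x}\|u_n\|_{S^s}$, which no decomposition makes small. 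The uniform estimate is not obtained by absorbing the potential term; it is obtained by proving uniform bounds on the solution operators $\mc{I}_{v^j_n}$ for each profile and then combining them through the iterated Duhamel identities of Lemma \ref{lem:Duhamel decompose} (equivalently \eqref{eqn:IV ident with V1 V2}), where smallness enters only through the compositions $\mc{I}_0 v^1_n\mc{I}_0 v^2_n$ for asymptotically orthogonal profiles (property (A1)) and through the Besov-small remainder (property (A2)). Moreover, since $\mc{I}_V$ is not additive in $V$ and the number of profiles is unbounded, one needs uniform bounds for whole sub-collections of profiles, not just for each profile separately. The paper gets these from an induction on mass: it defines the threshold $B^*$, takes a sequence with $\|V_n\|_{L^\infty_tL^2_x}\nearrow B^*$, and uses the very definition of $B^*$ to bound $\mc{I}_{v^{2,J}_n}$ uniformly because its mass stays strictly below $B^*$. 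Your contradiction setup (failure "for some $B$", normalising $\|u_n\|_{\underline S^s}=1$) has no such mechanism, so "transporting this control back up the profile decomposition" is exactly the step that is not justified.

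Two further points in the single-profile case. First, the strict positivity of the quadratic form of $-\De+\Re\phi$ below the ground state (your inequality is correct) does not by itself give the double endpoint Strichartz estimate for the static potential; this is a substantive input which the paper imports from \cite{Mizutani2016}, and it is not recovered by "one more application of the bilinear estimates and Lemma \ref{lem:energy ineq}", since those estimates are built for free-wave potentials. Second, the supersonic comparison $e^{i\alpha_n t|\nabla|}\phi\to\phi$ is not implemented through Besov-smallness of the discrepancy: on bounded time windows the discrepancy is made small in $L^\infty_tL^2_x$ by partitioning time into intervals on which the free wave is nearly constant, the two unbounded intervals (where the wave has dispersed, and only there the $\dot B^{-1}_{4,\infty}$ smallness and Theorem \ref{thm:besov gain} are used) are treated perturbatively, and the pieces are glued by the $TT^*$ decomposability Lemma \ref{lem:decom for IV}; in addition, a frequency-weight argument is needed to reduce $0<s<1$ to $s=0$ before this time-partition argument applies. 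These ingredients, together with the threshold argument above, are what your sketch is missing.
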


Roughly, the proof of Theorem \ref{thm:unif Stz} proceeds as follows. Let  $C(g)$ be the optimal constant in \eqref{unif Stz} with potential $V = e^{it|\nabla|} g$ and define the quantities
    $$ M(B) := \sup\{ \,C(g) \mid \| g\|_{L^2} \les B \}, \qquad B^* : = \sup \{ B>0 \mid M(B)<\infty \}.$$
Our goal is to show that $B^* = \| W^2 \|_{L^2(\RR^4)}$. It is easy enough to show that $B^* > 0$, this is simply a restatement of the small data theory. Suppose for the sake of a contradiction that $0< B^* < \| W^2 \|_{L^2(\RR^4)}$. Then there exists a sequence of potentials $V_n$ such that $ \| V_n \|_{L^\infty_t L^2_x} \ne  B^* < \| W^2 \|_{L^2(\RR^4)}$ such that the corresponding constant in \eqref{unif Stz} satisfies $C_n = C(V_n) \to \infty$. We now run a profile decomposition as in \cite{Bahouri1999} on the free waves $V_n$ in $L^\infty_t L^2_x$ with error going to zero in $\dot{B}^{-1}_{4, \infty}$. The estimates from the previous sections together with the orthogonality of the profiles reduces the problem to considering a single profile. It is in this reduction where the improved bilinear estimates in Theorem \ref{thm:main schro est nonres} play a crucial role. Finally, to deal with the single profile case, we can essentially proceed as in \cite{Guo2018} and show that it suffices to prove a Strichartz estimate for a static potential below the ground state. But this is a consequence of the general double endpoint Strichartz estimates contained in \cite{Mizutani2016}. \\

In the following, we make the proof sketched above precise. In Subsection \ref{ss:Duh exp}, we iterate the Duhamel formula to obtain a number of key identities that are exploited later. The arguments here are essentially algebraic in nature, but they have useful analytic consequences. In Subsection \ref{ss:prof decomp} we recall the profile decomposition of Bahouri-G\'{e}rard \cite{Bahouri1999}. In Subsection \ref{ss:proof of unif stri} we reduce the proof of Theorem \ref{thm:unif Stz} to proving three key properties: (i) orthogonal profiles only interact weakly, (ii) small $L^\infty_t \dot{B}^{-1}_{4, \infty}$ profiles can be discarded, (iii) the single profile case holds. Finally, in the remainder of this section, we give the proof of the properties (i), (ii), and (iii).

\subsection{Duhamel formula expansion} \label{ss:Duh exp}
The proof of the uniform Strichartz estimate is based on the profile decomposition applied to a sequence of wave potentials. Here we expand the Duhamel formula with respect to the potential, which allows us to reduce the uniform estimate to the case of single profile or remainder in the decomposition. The argument for expansion is simple and algebraic.

Fix $d\ge 3$ and let $t_0\in\R$ and $t_0 \in I \subset \RR$ be an interval. Let
    \begin{equation}\label{eqn:definition X0 Y0}
        Y_0:=L^2(I;L^{2_*}(\R^d)), \pq X_0:=C(I;L^2(\R^d))\cap L^2(I;L^{2^*}(\R^d)).
    \end{equation}
For any space-time functions $V\in L^\infty(I;L^{d/2}(\R^d))$ and $f\in L^2(I;L^{2_*}(\R^d))$, we define $\mc{I}_Vf$ be the unique solution $u\in C(I;L^2(\R^d))\cap L^2(I;L^{2^*}(\R^d))$ to
\EQ{
 u(t_0)=0, \pq (i\p_t+\De-V)u=f \pq(t\in I)}
and define $\mc{U}_V(t;s)$ to be the corresponding homogeneous solution operator with data at $t=s$. Under reasonable assumptions on the potential $V$, the operator $\mc{I}_V : Y_0 \to X_0$ is a bounded linear operator.

 \begin{lemma}[$\mc{I}_V$ well-defined]\label{lem: IV well-defined}
 Let $d\g 3$. There exists an $\epsilon>0$ such that for any $t_0 \in I\subset \RR$ and $V \in L^\infty_t L^{d/2}_x(I\times \RR^d)$, if  we can write $I = \cup_{j=1}^N I_j$ with $N<\infty$ and
        $$ \sup_{j=1, \dots N} \| V \|_{(L^\infty_t L^{d/2}_x + L^2_t L^{d}_x)(I_j \times \RR^d)} < \epsilon, $$
 then the linear operators $\mc{I}_V \in B(Y_0(I) \to X_0(I))$ and $\mc{U}_V(\cdot;t_0) \in B( L^2_x \to X_0(I))$ exist and are well-defined.
 \end{lemma}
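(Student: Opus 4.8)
The plan is to construct $\mc{I}_V$ and $\mc{U}_V(\cdot\,;t_0)$ by a contraction mapping argument, first on a single subinterval on which the potential norm is small, and then by gluing across the finite cover. The one analytic ingredient is the double endpoint Strichartz estimate \eqref{eqn:stand stri} (valid since $d\g 3$), together with its elementary companion $\|\mc{I}_0 F\|_{X_0}\lesa\|F\|_{L^2_tL^{2_*}_x+L^1_tL^2_x}$, combined with H\"older in $x$. On any interval $I'$, given a near optimal decomposition $V=V_1+V_2$ realising $\|V\|_{(L^\infty_tL^{d/2}_x+L^2_tL^d_x)(I')}$, the identities $\tfrac2d+\tfrac1{2^*}=\tfrac1{2_*}$ and $\tfrac1d+\tfrac1{2^*}=\tfrac12$ give, for every $u\in X_0(I')$,
\[ \|V_1u\|_{L^2_tL^{2_*}_x(I')}+\|V_2u\|_{L^1_tL^2_x(I')}\lesa\|V\|_{(L^\infty_tL^{d/2}_x+L^2_tL^d_x)(I')}\,\|u\|_{X_0(I')}. \]
Hence, for any base point $t_1\in I'$, \eqref{eqn:stand stri} shows that the Duhamel map $u\mapsto-i\int_{t_1}^{t}e^{i(t-s)\De}(Vu)(s)\,ds$ is bounded on $X_0(I')$ with operator norm $\les C_d\|V\|_{(L^\infty_tL^{d/2}_x+L^2_tL^d_x)(I')}$, where $C_d$ depends only on $d$.

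First I would fix $\e:=(2C_d)^{-1}$ and treat a single interval $I'$ with potential norm $<\e$: then for each $t_1\in I'$ the affine map $u\mapsto e^{i(t-t_1)\De}h-i\int_{t_1}^{t}e^{i(t-s)\De}(f+Vu)(s)\,ds$ is a contraction on $X_0(I')$ with Lipschitz constant $\le\tfrac12$, so it has a unique fixed point; this fixed point depends linearly on $(h,f)$ and obeys $\|u\|_{X_0(I')}\lesa\|h\|_{L^2_x}+\|f\|_{Y_0(I')}$. This realises $\mc{U}_V(\cdot\,;t_1)\in B(L^2_x\to X_0(I'))$ and an inhomogeneous solution operator in $B(Y_0(I')\to X_0(I'))$, and the solution of $(i\p_t+\De-V)u=f$, $u(t_1)=h$, is unique in $X_0(I')$.

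Next I would reduce the cover to consecutive small intervals: assuming (as in the intended application) the $I_j$ are intervals, their finitely many endpoints lying in $I$ subdivide $I$ into finitely many consecutive subintervals, each contained in some $I_j$ and hence with potential norm $<\e$; discarding empty ones and relabelling yields $I=J_1\cup\dots\cup J_K$ with the $J_k$ consecutive, $t_0\in J_{k_0}$, and $J_k,J_{k+1}$ sharing an endpoint. Then I would glue: solve on $J_{k_0}$ with data $h$ at $t_0$ by the single-interval step; moving outward from $J_{k_0}$, on each new $J_k$ solve (again by the single-interval step) with initial data the $L^2_x$-trace at the endpoint shared with the already treated neighbour — well defined since $X_0(J)\hookrightarrow C(J;L^2_x)$ — and patch. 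The matched traces make the patched function solve $(i\p_t+\De-V)u=f$, $u(t_0)=h$ on all of $I$; uniqueness on each $J_k$ forces uniqueness on $I$; and the per-piece bound $\|u\|_{X_0(J_k)}\lesa\|u\|_{X_0(J_{k-1})}+\|f\|_{Y_0(J_k)}$ telescopes to $\|u\|_{X_0(I)}\les C(N,d)\big(\|h\|_{L^2_x}+\|f\|_{Y_0(I)}\big)$, which proves $\mc{I}_V\in B(Y_0(I)\to X_0(I))$ and $\mc{U}_V(\cdot\,;t_0)\in B(L^2_x\to X_0(I))$.

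I do not expect a serious obstacle: the content is a standard Picard iteration plus bookkeeping. The points needing care are (i) that the smallness hypothesis sits in exactly the scaling-critical space $L^\infty_tL^{d/2}_x+L^2_tL^d_x$ paired against the endpoint norm $X_0$, so that the contraction closes with $\e$ depending only on $d$ (a non-endpoint Strichartz pair would produce a lossy time factor, which is precisely what one must avoid here); (ii) the combinatorial reduction to consecutive small subintervals; and (iii) tracking the finite, $N$-dependent constant through the gluing — harmless since $N<\I$.
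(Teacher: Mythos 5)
Your proposal is correct and follows essentially the same route as the paper's own (very brief) proof: a perturbative/contraction argument based on the double endpoint Strichartz estimate on each subinterval where the critical norm of $V$ is small, followed by gluing across the finitely many intervals, with the constant depending on $N$. Your write-up simply supplies the details (the H\"older pairings into $L^2_tL^{2_*}_x+L^1_tL^2_x$ and the trace-matching across consecutive subintervals) that the paper leaves implicit.
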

 \begin{proof}
 The first step is to use a perturbative argument via the endpoint Strichartz estimate to construct a solution on each interval $I_j$. Since there are only $N$ intervals, we obtain a solution on the whole interval $I$ and moreover the required bound holds.
 \end{proof}

 It is easy enough to check that the hypothesis in Lemma \ref{lem: IV well-defined} is always satisfied if $\| V \|_{L^\infty_t L^{d/2}_x(\RR^{1+d})}<\epsilon$. On the other hand, it also suffices to simply assume that $I \subset \RR$ is compact and $V \in C_t(I;L^{d/2}_x(\RR^d))$, since we can simply approximate the potential $V$ with a smooth, compactly supported (in $x$) potential.  Alternative assumptions on $V \in L^\infty_t L^{d/2}_x$ to ensure the existence of $\mc{I}_V \in B(Y_0 \to X_0)$ are possible, but the previous lemma suffices for our purposes.

 We now turn to the algebraic component of the argument. The key point is to compare the operators $\mc{I}_V$ and $\mc{I}_{V'}$. This is extremely useful both when $V' =0$, and $V'\not = 0$. Identifying the potential $V$ with the multiplication operator,
\EQ{
 V: X_0 \to Y_0}
we obtain a bounded linear operator via H\"older's inequality, in other words we obtain an operator $V \in B(X_0 \to Y_0)$. Since $u=\mc{I}_Vf$ solves for any other $V'\in C(I;L^{d/2}(\R^d))$,
\EQ{
 u(t_0)=0, \pq (i\p_t+\De-V')u=f+(V-V')u,}
we obtain the relation between $\mc{I}_V$ and $\mc{I}_{V'}$:
\EQ{\label{exp IV by IV' 1}
 \mc{I}_V = \mc{I}_{V'}\BR{1+(V-V')\mc{I}_V},}
which can be written also as
\EQ{ \label{exp IV by IV'}
 \mc{I}_{V'} = \BR{1+\mc{I}_{V'}(V'-V)}\mc{I}_V.}
In particular, choosing $V'=0$ or $V=0$ yields
\EQ{ \label{exp IV by I0}
 \pt \mc{I}_V = \mc{I}_0\BR{1+V\mc{I}_V} = \BR{1+\mc{I}_VV}\mc{I}_0,
 \pq \mc{I}_0 = \mc{I}_V\BR{1-V\mc{I}_0} = \BR{1-\mc{I}_0V}\mc{I}_V.}
There are two immediate consequences of these identities that we wish to highlight. \\

\begin{enumerate}
\item[ (I) (Small perturbations). ] If $V$ is a small perturbation of a potential $V'$, and $I_{V'}$ is a bounded linear operator, then so is $\mc{I}_V$. More precisely, by expanding either $[1+(V'-V)\mc{I}_{V'}]^{-1}$ or $[1 + \mc{I}_{V'}(V'-V)]^{-1}$ in a Neumann series and applying the identities \eqref{exp IV by IV' 1} or \eqref{exp IV by IV'} yields for any Banach spaces $X\subset X_0$ and $Y\subset Y_0$ the implication
\EQ{ \label{small pert IV}
 \pt\de:=\min(\|\mc{I}_{V'}(V-V')\|_{B(X\to X)},\|(V-V')\mc{I}_{V'}\|_{B(Y\to Y)})<1
 \pr\pq\pq\implies \|\mc{I}_V\|_{B(Y\to X)} \le (1-\de)^{-1}\|\mc{I}_{V'}\|_{B(Y\to X)},}
which is useful both with $V'=0$ and with $V'\not=0$.\\

\item[ (II) (Automatic upgrading).]  Let $X_2\subset X_1\subset X_0$
  and $Y_1\subset Y_0\subset Y_2$ be Banach spaces with continuous
  embeddings, and $Y_0\subset Y_2$ dense. Then under suitable assumptions, we can automatically upgrade the operator $\mc{I}_V: Y_1 \to X_1$ to a map $Y_2 \to X_2$. More precisely, if
        $$\mc{I}_V:Y_1\to X_1, \qquad \mc{I}_0:Y_2\to X_2, \qquad V:X_2\to Y_1, \qquad V:X_1\to Y_2$$
    are bounded, then from \eqref{exp IV by I0} we obtain the
    extension $\mc{I}_V:Y_2\to X_2$ with the stronger estimate
\EQ{ \label{auto up}
 \|\mc{I}_V\|_{B(Y_2\to X_2)} \pt\les \|\mc{I}_0\|_{B(Y_2\to X_2)}\BR{1+\|V\|_{B(X_1\to Y_2)}\|\mc{I}_V\|_{B(Y_2\to X_1)}}
 \pr\les  \|\mc{I}_0\|_{B(Y_2\to X_2)}\BR{1+\|V\|_{B(X_1\to Y_2)}\|\mc{I}_0\|_{B(Y_2\to X_2)}(1+\|V\|_{B(X_2\to Y_1)}\|\mc{I}_V\|_{B(Y_1\to X_1)}) }.
}\\
\end{enumerate}

We now consider potentials of the form $V=V_1 + V_2$. By iterating the above, we arrive at the following key lemma.

\begin{lemma} \label{lem:Duhamel decompose}
Let $d\ge 3$, $t_0\in\R$, $t_0\in I$ be an interval, and $V=V_1+V_2$ with $V_j\in L^\infty(I;L^{d/2}(\R^d))$. Suppose that for some Banach spaces $\underline{X} \subset X\subset X_0$ and $Y\subset Y_0$, we have finite numbers $M,\e$ such that
\EQ{
 \|\mc{I}_0\|_{B(Y \to \underline{X})} + \|\mc{I}_{V_j}\|_{B(Y \to \underline{X})} + \|V_j\|_{B(X \to Y)} \le M,
\pq \|\mc{I}_0V_2\mc{I}_0V_1\|_{B(\underline{X}\to X)}\le\e,}
for $j=1, 2$, where $X_0,Y_0$ and $\mc{I}_V$ are as defined above. Let $C:=\|I\|_{B(\underline{X}\to X)}$ and $\ti M:=(1+CM^2)^6$.
If $\ti M\e<1$, then we have
\EQ{
 \|\mc{I}_V\|_{B(Y \to \underline{X})} \le (1-(\ti M\e)^\frac{1}{2})^{-1}M \ti M.}
\end{lemma}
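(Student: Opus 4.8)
The plan is to peel the two potentials off one at a time by means of the resolvent-type identities \eqref{exp IV by IV' 1}--\eqref{exp IV by I0}, thereby isolating a single ``remainder'' operator which, thanks to the transversality hypothesis $\|\mc{I}_0V_2\mc{I}_0V_1\|_{B(\underline X\to X)}\le\e$, is a contraction on $\underline X$; a Neumann series then closes the estimate. As a preliminary, the hypotheses (together with Lemma \ref{lem: IV well-defined}, applied after covering $I$ by finitely many subintervals, or after approximating the $V_j$ by smooth compactly supported potentials) ensure that $\mc{I}_V,\mc{I}_{V_1},\mc{I}_{V_2}$ exist as bounded operators $Y_0\to X_0$, so that the identities of Subsection \ref{ss:Duh exp} are genuine identities of bounded operators. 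Applying \eqref{exp IV by IV' 1} with $V'=V_1$ gives $\mc{I}_V=\mc{I}_{V_1}+\mc{I}_{V_1}V_2\mc{I}_V$, and applying it once more with $V'=V_2$ to the trailing factor $\mc{I}_V$ yields the key identity
\[ \mc{I}_V=\Phi+R\,\mc{I}_V,\qquad \Phi:=\mc{I}_{V_1}+\mc{I}_{V_1}V_2\mc{I}_{V_2},\qquad R:=\mc{I}_{V_1}V_2\mc{I}_{V_2}V_1. \]
Since $\|V_2\|_{B(\underline X\to Y)}\le C\|V_2\|_{B(X\to Y)}\le CM$, the source term obeys $\|\Phi\|_{B(Y\to\underline X)}\le M+M\cdot CM\cdot M=M(1+CM^2)\le M\ti M$, so everything reduces to inverting $1-R$ on $\underline X$ with norm at most $(1-(\ti M\e)^{1/2})^{-1}$.

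For the crux, one shows that $R$ is small on $\underline X$. Using \eqref{exp IV by I0} one substitutes $\mc{I}_{V_1}V_2=[1+\mc{I}_{V_1}V_1]\mc{I}_0V_2$ and $\mc{I}_{V_2}V_1=[1+\mc{I}_{V_2}V_2]\mc{I}_0V_1$, and then expands the surviving $\mc{I}_{V_2}$ (and, where needed, $\mc{I}_{V_1}$) once more via the same identities, so that the block $\mc{I}_0V_2\mc{I}_0V_1$ appears as a contiguous factor of $R$ and of each of the finitely many correction terms thereby produced. In each such term the remaining factors form a finite product of the operators $\mc{I}_{V_1},\mc{I}_{V_2},V_1,V_2$ and the inclusion $\underline X\hookrightarrow X$, bounded---in the directions forced by $\underline X\subset X$---by $M$ and $C$, hence dominated by a power of $1+CM^2$. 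Combining with the hypothesis $\|\mc{I}_0V_2\mc{I}_0V_1\|_{B(\underline X\to X)}\le\e$ and tracking the powers of $1+CM^2$ yields $\|R\|_{B(\underline X\to\underline X)}\le(1+CM^2)^6\e=\ti M\e\le(\ti M\e)^{1/2}$, the last inequality using $\ti M\e<1$.

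It remains to combine these. The identity $\mc{I}_V=\Phi+R\mc{I}_V$ together with $\mc{I}_V:Y_0\to X_0$ and the trivial boundedness $V_j:X_0\to Y_0$ (H\"older, since $V_j\in L^\infty_tL^{d/2}_x$) first upgrades $\mc{I}_V$ to a bounded operator $Y\to\underline X$, a priori with a possibly large norm. Then, since $\|R\|_{B(\underline X\to\underline X)}\le(\ti M\e)^{1/2}<1$, the Neumann series $(1-R)^{-1}=\sum_{k\ge0}R^k$ converges on $\underline X$ with $\|(1-R)^{-1}\|_{B(\underline X\to\underline X)}\le(1-(\ti M\e)^{1/2})^{-1}$, and $\mc{I}_V=(1-R)^{-1}\Phi$ gives $\|\mc{I}_V\|_{B(Y\to\underline X)}\le(1-(\ti M\e)^{1/2})^{-1}M\ti M$, as asserted.

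The main obstacle is the crux. One must organize the algebraic expansion so that \emph{every} term of the remainder $R$ contains the transversal block $\mc{I}_0V_2\mc{I}_0V_1$ (the only double interaction for which the hypothesis supplies an $\e$-bound), rather than an uncontrolled mixed interaction such as $\mc{I}_0V_1\mc{I}_0V_2$, while simultaneously arranging that the flanking operators map between $\underline X$ and $X$ in the directions compatible with $\underline X\subset X$ so that the $B(\underline X\to X)$-bound on the block can actually be inserted and closed off. Carefully tracking the powers of $1+CM^2$ produced by these manipulations is precisely what fixes the constant $\ti M=(1+CM^2)^6$.
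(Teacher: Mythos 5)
Your setup (well-definedness of the operators, the identity $\mc{I}_V=\Phi+R\,\mc{I}_V$ with $R=\mc{I}_{V_1}V_2\mc{I}_{V_2}V_1$, and the bound $\|\Phi\|_{B(Y\to\underline X)}\le M(1+CM^2)$) coincides with the paper's. The gap is in your crux: the claim $\|R\|_{B(\underline X\to\underline X)}\le \ti M\e$ is not derivable from the hypotheses. The only smallness you are given is for the $B(\underline X\to X)$ norm of the block $\mc{I}_0V_2\mc{I}_0V_1$, i.e.\ an operator that lands in the \emph{larger} space $X$. When you expand $R$ so that this block appears as the rightmost contiguous factor, the leftover prefix necessarily contains a bare identity term: e.g.\ $\mc{I}_{V_1}V_2\mc{I}_0V_1=\bigl(1+\mc{I}_{V_1}V_1\bigr)\mc{I}_0V_2\mc{I}_0V_1$, because exposing the block's leading $\mc{I}_0$ forces the substitution $\mc{I}_{V_1}=(1+\mc{I}_{V_1}V_1)\mc{I}_0$. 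That identity term maps $X\to X$ but not $X\to\underline X$ (the inclusion runs the other way), so the composite is only small as a map $\underline X\to X$. In particular the summand $\mc{I}_0V_2\mc{I}_0V_1$ itself sits inside your expansion of $R$, and its $B(\underline X\to\underline X)$ norm is only controlled by $C^2M^4$, with no factor of $\e$. So $R$ cannot be shown to be a contraction on $\underline X$, and your Neumann series does not close. Your final step ``$\ti M\e\le(\ti M\e)^{1/2}$'' is a sign of the problem: the square root in the stated constant is not a cosmetic weakening but is structurally forced, precisely because smallness is only available for every \emph{other} factor in a product.

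The paper's proof repairs exactly this point. Writing $R=A\,\mc{I}_0V_2\mc{I}_0V_1$ with $A=1+\mc{I}_{V_1}V_1+\mc{I}_{V_1}V_2(1+\mc{I}_{V_2}V_2)$, one records two bounds: $\|R\|_{B(\underline X\to X)}\le(1+CM^2)^2\e$ (small) and $\|R\|_{B(X\to\underline X)}\le(1+CM^2)^4$ (bounded but not small). Pairing consecutive factors in $R^n$ — one measured $\underline X\to X$, the next $X\to\underline X$ — gives $\|R^n\|_{B(\underline X\to X)}\lesssim(\ti M\e)^{n/2}$, whence the Neumann series converges in $B(\underline X\to X)$ and yields $\mc{I}_V\in B(Y\to X)$; a final application of $\mc{I}_V=\mc{I}_{V_1}(1+V_2\mc{I}_V)$ upgrades this to $B(Y\to\underline X)$. (Equivalently, you could observe that $\|R^2\|_{B(\underline X\to\underline X)}\le\ti M\e<1$ and invert $1-R$ via $(1+R)(1-R^2)^{-1}$.) Your argument can be salvaged along these lines, but as written the contraction claim for $R$ on $\underline X$ is a genuine gap.
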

\begin{proof}
By iterating the Duhamel expansions \eqref{exp IV by IV' 1} and \eqref{exp IV by IV'}, we obtain the identities
\begin{align*}
 \mc{I}_V &= \mc{I}_{V_1}(1+V_2\mc{I}_V) = \mc{I}_{V_2}(1+V_1\mc{I}_V) \\
 &= \mc{I}_{V_1}\BR{1+V_2 \mc{I}_{V_2}(1+V_1\mc{I}_V)} = \mc{I}_{V_1} + \mc{I}_{V_1}V_2\mc{I}_{V_2} + \mc{I}_{V_1}V_2 \mc{I}_{V_2}V_1 \mc{I}_V,
\end{align*}
which we rewrite as
$$
 \BR{1-\mc{I}_{V_1}V_2 \mc{I}_{V_2}V_1}\mc{I}_V = \mc{I}_{V_1} + \mc{I}_{V_1}V_2\mc{I}_{V_2}.
$$
In order to invert the bracket on the left, we further expand $\mc{I}_{V_2}$ twice which gives
$$
  \mc{I}_{V_2}  = (1+\mc{I}_{V_2}V_2)\mc{I}_0 =  \mc{I}_0  + ( 1 + \mc{I}_{V_2} V_2) \mc{I}_0 V_2 \mc{I}_0
$$
and hence
  \begin{align*}
    \mc{I}_{V_1} V_2 \mc{I}_{V_2} V_1 &= \mc{I}_{V_1} V_2 \mc{I}_0 V_1 + \mc{I}_{V_1} V_2 ( 1 + \mc{I}_{V_2} V_2) \mc{I}_0 V_2 \mc{I}_0 V_1 \\
        &= (1 + \mc{I}_{V_1} V_1) \mc{I}_0 V_2 \mc{I}_0 V_1 + \mc{I}_{V_1} V_2 ( 1 + \mc{I}_{V_2} V_2) \mc{I}_0 V_2 \mc{I}_0 V_1 = A \mc{I}_0 V_2 \mc{I}_0 V_1
  \end{align*}
with
    $$ A:=  1 + \mc{I}_{V_1} V_1 + \mc{I}_{V_1} V_2 ( 1 + \mc{I}_{V_2} V_2). $$
Therefore we conclude that
    \begin{equation}\label{eqn:IV ident with V1 V2}
 \BR{1- A \mc{I}_{0} V_2 \mc{I}_{0} V_1}\mc{I}_V = \mc{I}_{V_1} + \mc{I}_{V_1}V_2\mc{I}_{V_2}.
    \end{equation}
Hence if both $\mc{I}_{V_j}$ are bounded and $\mc{I}_0V_2\mc{I}_0V_1$ is small enough, then $\mc{I}_V$ is a small perturbation of $\mc{I}_{V_1} ( 1 + V_2 \mc{I}_{V_2})$ and so bounded as well. More precisely, an application of the assumed operator bounds together with composition of mappings, gives the bounds
    $$
        \| \mc{I}_{V_1} + \mc{I}_{V_1}V_2\mc{I}_{V_2} \|_{B(Y \to \underline{X})} \les \| \mc{I}_{V_1}  \|_{B(Y \to \underline{X})}  + \| \mc{I}_{V_1}  \|_{B(Y \to \underline{X})} \|V_2\|_{B(X \to Y)} \| \mc{I}_{V_2} \|_{B(Y \to X)} \les M ( 1 + CM^2)
    $$
and
    \begin{align*}
         \| A \|_{B(X\to X)} &\les 1 + \| \mc{I}_{V_1} \|_{B(Y \to X)} \| V_1 \|_{B(X\to Y)} + \| \mc{I}_{V_1} \|_{B(Y \to X)} \| V_2 \|_{B(X \to Y)} \big( 1 + \| \mc{I}_{V_2}\|_{B(Y \to X)} \| V_2 \|_{B(X\to Y)}\big) \\
                        &\les 1 + CM^2 + CM^2 ( 1 + CM^2) = ( 1 + CM^2)^2
    \end{align*}
and therefore
        $$ \| A \mc{I}_0 V_2 \mc{I}_0 V_1 \|_{B(\underline{X} \to X)} \les \| A \|_{B(X \to X)} \| \mc{I}_0 V_2 \mc{I}_0 V_1 \|_{B(\underline{X} \to X)} \les (1 + CM^2)^2 \epsilon. $$
On the other hand, a short computation also gives
        $$ \| A \mc{I}_0 V_2 \mc{I}_0 V_1 \|_{B(X \to \underline{X})}  \les  ( 1 + CM^2)^4$$
and consequently for every $n\ge 0 $
        $$ \| ( A \mc{I}_0 V_2 \mc{I}_0 V_1)^n \|_{B(\underline{X} \to X)} \les C\big( ( 1 + CM^2)^4 ( 1 + CM^2)^2 \epsilon \big)^{\frac{n}{2}}  = C\big( ( 1 + CM^2)^6 \epsilon \big)^{\frac{n}{2}}. $$
Therefore, since $( 1 + CM^2)^6 \epsilon < 1$ by assumption, the series $\sum_{n=0}^\infty ( A \mc{I}_0 V_2 \mc{I}_0 V_1)^n$ converges absolutely in $B(\underline{X}\to X)$. Consequently, by the Neumann series, we can invert the bracket on the left hand side of \eqref{eqn:IV ident with V1 V2} and define
        $$ \mc{I}_V = [ 1- A \mc{I}_0 V_2 \mc{I}_0 V_1]^{-1} \big( \mc{I}_{V_1}  + \mc{I}_{V_1}V_2\mc{I}_{V_2}\big): Y \to X$$
with the operator bound
        \begin{align*}
            \| \mc{I}_V \|_{B( Y \to X)} &\les \sum_{n=0}^\infty \| (A \mc{I}_0 V_2 \mc{I}_0 V_1)^n \|_{B(\underline{X} \to X)} \| \mc{I}_{V_1} + \mc{I}_{V_1}V_2\mc{I}_{V_2}\|_{B(Y \to \underline{X})} \\
                                         &\les \sum_{n=0}^\infty  C\big( ( 1 + CM^2)^6 \epsilon \big)^{\frac{n}{2}} M ( 1 + CM^2)  =  \big( 1 - (\ti{M} \epsilon)^\frac{1}{2} \big)^{-1} CM ( 1 + CM^2).
        \end{align*}
The only remaining step is to upgrade $\mc{I}_V$ to a map into the smaller space $\underline{X} \subset X$. This follows by repeating the argument in (II) above. Namely, the identity $ \mc{I}_V = \mc{I}_{V_1} (1 + V_2 \mc{I}_V)$
implies that
    \begin{align*}
        \| \mc{I}_V \|_{B(Y \to \underline{X})} \les \| \mc{I}_{V_1} \|_{B(Y\to \underline{X})} \big( 1 + \| V_2 \|_{B(X \to Y)} \| \mc{I}_{V} \|_{B(Y \to X)} \big) &\les M  + CM^2 \big( 1 - (\ti{M} \epsilon)^\frac{1}{2} \big)^{-1} M ( 1 + CM^2) \\
        &\les \big( 1 - (\ti{M} \epsilon)^\frac{1}{2} \big)^{-1} M \ti{M}
    \end{align*}
\end{proof}

\subsection{The profile decomposition}\label{ss:prof decomp}
In the following we state the profile decomposition due to Bahouri-G\'erard \cite{Bahouri1999}. The version we give below is slightly adjusted to our setting, but the proof is the same. Let $V_n$ be a sequence of free waves with bounded $L^2(\R^4)$ norm, and let
\EQ{ \label{prof decop}
 V_n = \sum_{j=1}^J V_n^j + \Ga_n^J}
be its profile decomposition in $L^2(\R^4)$, such that (along some subsequence)
\EQ{ \label{dec Gamma}
 \lim_{J\to\I} \limsup_{n\to\I} \|\Ga^J_n\|_{L^\I_t \dot B^{-1}_{4, \infty}}=0}
and
\begin{equation}\label{eqn:defn of profiles}
 V_n^j(t,x) = \fg_n^j V^j :=(\sigma_n^j)^2 V^j(\sigma_n^j t - t_n^j, \sigma_n^j x - x_n^j),
\end{equation}
where the profiles $V^j = e^{it |\nabla|} \varphi^j$ are free waves which are independent of $n$, and satisfy
    \begin{equation}\label{decoupling in L2}
        \lim_{n\to \infty} \Big| \| V_n \|_{L^\infty_t L^2_x} -  \Big( \| \Gamma^J_n \|_{L^\infty_t L^2_x}^2 + \sum_{j=1}^J \| V^j \|_{L^\infty_t L^2_x}^2 \Big)^\frac{1}{2}\Big| = 0.
    \end{equation}
The group elements $\mathfrak{g}^j_n = \mathfrak{g}[\sigma^j_n, x^j_n, t^j_n]$ are asymptotically orthogonal, in the sense that for each pair $j\not=k$, one of the following holds as $n\to\I$:
\begin{enumerate}
\item $|\log(\sigma^j_n/\sigma^k_n)|\to\I$ (scale separation).
\item $\sigma^j_n\equiv \sigma^k_n$ and $|t^j_n-t^k_n|\to\I$ (time separation).
\item $\sigma^j_n\equiv \sigma^k_n$, $t^j_n\equiv t^k_n$ and $|x^j_n-x^k_n|\to \I$ (space separation).
\end{enumerate}
We may normalize the parameters such that
\begin{enumerate}
\item $\sigma^j_n\equiv 1$, $\sigma^j_n\to+\I$ or $\sigma^j_n\to+0$.
\item $t^j_n\equiv 0$ or $t^j_n\to\pm\I$.
\item $x^j_n\equiv 0\in\R^d$ or $|x^j_n|\to\I$.
\end{enumerate}

\subsection{Reductions and the proof of Theorem \ref{thm:unif Stz}}\label{ss:proof of unif stri}

The first step in the proof \eqref{unif Stz} is to observe that the claimed Strichartz bound always holds, but with a constant $C$ that depends on the potential $V$. This is a consequence of the more general result \cite[Theorem 7.1]{Candy2023}. However, for completeness, we include the special case $d=4$ in the low regularity regime with a short proof here.

\begin{theorem} \label{thm:non-unif Stz}
Let $d=4$ and $0\les s < 1$. If $V = e^{it|\nabla|} g \in L^\infty_t L^2_x$, then there exists $C(V)>0$ such that for any $f \in H^s$, $F \in N^s$, there exists a unique solution $u \in C(\RR, H^s)\cap L^2_t L^{4}_x$ to
    $$ (i\p_t + \Delta - \Re(V) ) u = F, \qquad u(0) = f$$
and moreover, we have the bound
    $$ \| u \|_{\underline{S}^s} \les C(V) \big( \| f \|_{H^s} + \| F \|_{N^s}\big). $$
\end{theorem}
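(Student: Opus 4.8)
The plan is to reduce to the local-in-time theory of Lemma~\ref{lem: IV well-defined} together with the bilinear estimates of Sections~\ref{sec:bilest} and \ref{sec:proof of res gain}, using the dispersion of the free wave $V=e^{it|\nabla|}g$. The point is that although $\Re(V)$ always lies in $L^\infty_t L^{d/2}_x=L^\infty_t L^2_x$ with norm $\|g\|_{L^2}$, which is \emph{not} small, a free wave disperses in the Strichartz space $L^2_tL^d_x$; so after subtracting a part of $g$ that is small in $L^2$, the potential splits into finitely many pieces that are small in $(L^\infty_tL^{d/2}_x+L^2_tL^d_x)(I\times\R^4)$ on suitable intervals. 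Fix a small constant $\delta>0$ (to be chosen in terms of the implied constants in Lemma~\ref{lem:energy ineq}, Theorem~\ref{thm:main schro est nonres} and Corollary~\ref{cor:summed up besov gain}) and write $g=g_1+g_2$ with $\|g_1\|_{L^2}<\delta$ and $g_2\in\mc{S}(\R^4)$ having $\widehat{g_2}$ supported in an annulus; set $V_j=e^{it|\nabla|}g_j$. The dispersive decay of the Schwartz wave $V_2$ gives $V_2\in L^2_tL^d_x(\R^{1+4})$ and $\|V_2\|_{L^\infty_t B^{\frac4r-2}_{r,\infty}(\{|t|>T\})}\to 0$ as $T\to\infty$ (for the $r>2$ of Corollary~\ref{cor:summed up besov gain}), while $\|V_2\|_{L^2_tH^{d/2-1}_x(I)}=|I|^{1/2}\|g_2\|_{H^{d/2-1}}$ on any interval $I$. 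Hence I would partition $\R=I_-\cup I_1\cup\dots\cup I_N\cup I_+$ into a symmetric pair of unbounded tails $I_\pm=\pm(T,\infty)$, with $T$ so large that $\|V_2\|_{L^\infty_t B^{\frac4r-2}_{r,\infty}(I_\pm)}<\delta$ and $\|V_2\|_{L^2_tL^d_x(I_\pm)}<\delta$, and finitely many bounded intervals $I_j\subset[-T,T]$ of length $\le\tau$ with $\tau^{1/2}\|g_2\|_{H^{d/2-1}}<\delta$. On each interval of this partition $\|\Re(V_1+V_2)\|_{(L^\infty_tL^{d/2}_x+L^2_tL^d_x)}\lesssim\delta$, so Lemma~\ref{lem: IV well-defined} produces the unique solution $u\in C_tL^2\cap L^2_tL^{2^*}$ on all of $\R$ with local Lipschitz dependence.

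The heart of the matter is the a priori bound, which I would obtain one interval at a time, propagating ``$u(a)\in H^s$'' from the left endpoint $a$ of each interval to the right endpoint (starting from $u(0)=f$) via a contraction for $\Phi(u)=\mc{U}_0(\cdot-a)u(a)+\mc{I}_0[\Re(V_1+V_2)u+F]$. To handle cleanly the $S^s$ versus $\underline{S}^s$ discrepancy in Corollary~\ref{cor:summed up besov gain}, I would run the fixed point in the interpolation space $S^s_{1/2}=[S^s,\underline{S}^s]_{1/2}$ of the remark following that corollary, which still controls $C_tH^s$. By Lemma~\ref{lem:energy ineq} the free-data term is bounded by $\|u(a)\|_{H^s}$ and the forcing by $\|F\|_{N^s(I)}$. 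The small wave $\Re(V_1)$ is handled globally: since $V_1$ solves the free wave equation the forcing term in Theorem~\ref{thm:main schro est nonres} vanishes, so (with $\epsilon=0$) Lemma~\ref{lem:energy ineq} gives $\|\mc{I}_0[\Re(V_1)u]\|_{S^s_{1/2}(I)}\lesssim\|g_1\|_{L^2}\|u\|_{S^s(I)}<\delta\|u\|_{S^s_{1/2}(I)}$. On a bounded $I_j$, apply the general-$v$ bound \eqref{eqn:thm main schro est nonres:v large temp freq} to $v=\mathbf{1}_{I_j}\Re(V_2)$ (extending $u$), giving $\|\mc{I}_0[\Re(V_2)u]\|_{S^s_{1/2}(I_j)}\lesssim\|V_2\|_{L^2_tH^{d/2-1}_x(I_j)}\|u\|_{S^s(I_j)}<\delta\|u\|_{S^s_{1/2}(I_j)}$. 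On the tails $I_\pm$, where that norm is infinite, use Corollary~\ref{cor:summed up besov gain} instead, which bounds $\|\mc{I}_0[\Re(V_2)u]\|_{S^s(I_\pm)}$ (hence the $S^s_{1/2}$ norm) by $\|V_2\|_{L^\infty_t B^{\frac4r-2}_{r,\infty}(I_\pm)}^{\theta}\|V_2\|_{L^\infty_tL^2_x}^{1-\theta}\|u\|_{S^s_{1/2}(I_\pm)}<\delta^{\theta}\|g_2\|_{L^2}^{1-\theta}\|u\|_{S^s_{1/2}(I_\pm)}$. Choosing $\delta$ small enough that all contributions sum to less than $\tfrac12\|u\|_{S^s_{1/2}(I)}$ closes the contraction and yields $\|u\|_{S^s_{1/2}(I)}\lesssim\|u(a)\|_{H^s}+\|F\|_{N^s(I)}$; since $\|u(b)\|_{H^s}\lesssim\|u\|_{S^s_{1/2}(I)}$, iterating over the $N+2$ intervals gives a potential-dependent $C(V)$ with $\|u\|_{S^s_{1/2}(\R)}\le C(V)(\|f\|_{H^s}+\|F\|_{N^s})$. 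Finally, once $u\in S^s(\R)$ is known to have finite norm, Theorem~\ref{thm:main schro est nonres} and Lemma~\ref{lem:energy ineq} upgrade the bound to $\underline{S}^s(\R)$ with no smallness needed, and uniqueness in $C_tH^s\cap L^2_tL^{2^*}$ follows from Lemma~\ref{lem: IV well-defined}.

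The step I expect to be the main obstacle is the estimate for $\Re(V_2)u$ on the dispersive tails. There $\Re(V_2)$ is not small in $L^\infty_tL^{d/2}_x$, so the crude resonant bound \eqref{eqn:thm multilinear est gen:main high-high res} behind Theorem~\ref{thm:main schro est nonres} is useless; what makes it work is precisely the high--low frequency gain together with the $L^\infty_t B^{\frac4r-2}_{r,\infty}$ factor of Corollary~\ref{cor:summed up besov gain}, which rely on the bilinear Fourier restriction estimates of Section~\ref{sec:fre} and convert the time decay of the Schwartz wave $V_2$ into the required smallness. The remaining bookkeeping --- the extension/restriction arguments relating full-line and interval versions of the bilinear estimates, and the reconciliation of $S^s$, $S^s_{1/2}$ and $\underline{S}^s$ --- is routine.
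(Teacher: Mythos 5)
Your proof is correct in its essential architecture and it reaches the stated conclusion, but it takes a genuinely different (and heavier) route than the paper in one place. Both arguments split $V$ into a part with small $L^\infty_tL^2_x$ norm plus a ``nice'' part, make the nice part small in an auxiliary norm on each of finitely many intervals, run a contraction on each interval using Theorem~\ref{thm:main schro est nonres} and Lemma~\ref{lem:energy ineq}, and iterate. The paper, however, obtains the interval decomposition \emph{and} the smallness on the unbounded tails in one stroke by citing \cite[Lemma 4.1]{Bejenaru2015}, writing $V=V_0+\sum_j\ind_{I_j}V_1$ with $\|V_0\|_{L^\infty_tL^2_x}+\sup_j\|V_1\|_{L^2_tB^s_{4,2}(I_j)}\les\e$, and then only needs the elementary Besov product estimate \eqref{eqn:thm non-unif Stz:potential in L2time}; in particular the paper's proof of this theorem is entirely independent of the bilinear restriction machinery. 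You instead construct the partition by hand (Schwartz reduction, short intervals via $L^2_tH^1_x$, tails via dispersive decay) and, on the tails, invoke Corollary~\ref{cor:summed up besov gain}, which rests on Theorems~\ref{thm:bilinear wave-schrodinger} and \ref{thm:bilinear schrodinger}. Your route is self-contained within the paper but uses far stronger tools than necessary; the paper's is lighter but outsources the decomposition to a cited lemma.

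One point you dismiss as ``routine bookkeeping'' deserves care: Corollary~\ref{cor:summed up besov gain} is a global-in-time statement for a genuine free wave, and you cannot obtain smallness on $I_\pm$ by restriction, since the natural extension of $v|_{I_\pm}$ is the free wave itself, whose $L^\infty_t\dot B^{-1}_{4,\infty}$ norm over all of $\RR$ is not small. You must instead use Theorem~\ref{thm:besov gain} with a time cutoff $\chi\approx\ind_{I_\pm}$, and the hypothesis $\supp\widetilde{(\chi v)}\subset\{|\tau|\lesa\lr{\xi}\}$ forces $\widehat\chi$ to be compactly supported, which a sharp indicator is not. The fix is the $\rho$ versus $\chi$ argument the paper runs in Step (ii) of \S\ref{subsec:a3} (and in \S\ref{subsec:a1} via \eqref{eqn:cutoff prop}): take $\chi$ smooth with $\supp\widehat\chi$ compact and $\|\ind_{I_\pm}-\chi\|_{L^\infty_t}\lesa T^{-1}$, apply the Besov-gain estimate to $\chi v$, and absorb $(\ind_{I_\pm}-\chi)v$ with the crude $L^\infty_tL^2_x$ bound. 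With that insertion your argument closes; everything else (the contraction in $S^s_{1/2}$, the propagation of $H^s$ data across finitely many intervals, and the final upgrade to $\underline{S}^s$) is sound.
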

\begin{proof}
 An application of Theorem \ref{thm:main schro est nonres} gives for any interval $I\subset \RR$
    $$ \| \Re(V) u \|_{N^s(I)} \lesa \| V \|_{L^\infty_t L^2_x} \| u \|_{S^s(I)}. $$
On the other hand, the standard product inequality for Besov spaces gives
    \begin{align} \| \Re(V) u \|_{N^s(I)} &\lesa \| \Re(V) u \|_{L^2_t B^s_{4/3, 2}(I\times \RR^4)} \notag \\
                                &\lesa \| V \|_{L^2_t B^s_{4, 2}(I\times \RR^n)} \| u \|_{L^\infty_t H^s(I\times \RR^4)} \les \| V \|_{L^2_t B^s_{4, 2}(I\times \RR^4)} \| u \|_{S^s(I)}. \label{eqn:thm non-unif Stz:potential in L2time}
    \end{align}
In particular, if we have an interval $I\subset \RR$ such that  $V=V_1 + V_2$ with
        $$ \| V_1 \|_{L^\infty_t L^2_x} + \| V_2 \|_{L^2_t B^s_{4, 2}(I\times \RR^4)} < \epsilon, $$
then provided $\epsilon>0$ is sufficiently small, the iterates
        $$ (i\p_t + \Delta) u_j = \Re(V) u_{j-1} + F, \qquad u_j(t_0) = f$$
converge to a solution $u \in \underline{S}^s(I)$. To extend this result to $\RR$, we note that by \cite[Lemma 4.1]{Bejenaru2015} there exists a finite partition $\RR = \cup_{j=1}^N I_j$ into intervals, and a decomposition $V = V_0 + \sum_{j=1}^N \ind_{I_j}(t) V_1$ such that $V_0$ and $V_1$ are free waves, and we have the bounds
        $$ \| V_0 \|_{L^\infty_t L^2_x} + \sup_{j=1, \dots, N} \| V_1 \|_{L^2_t B^s_{4, 2}(I_j \times \RR^4)} \les \epsilon. $$
Hence we can iteratively construct the solution on each interval $I_j$. Since there are only finitely many intervals, we obtain a global bound as claimed.
\end{proof}

We now state the basic mapping properties of the Duhamel operators $\mc{I}_V$ that have been obtained above.

\begin{corollary}\label{cor:bounded maps}
Let $0\les s < 1$ and $d=4$.
\begin{enumerate}
    \item  There exists $C>0$ and $0< \theta < 1$ such that for any (real-valued) free wave $v= \Re(e^{it|\nabla|} g)  \in C(\RR ; L^2)$ we have
   $$ \| \mc{I}_0 \|_{B(N^s \to \underline{S}^s)} \les C, \qquad \| v \|_{B(S^s \to N^s)} \les C \| v \|_{L^\infty_t L^2_x}, \qquad \| \mc{I}_0 v \|_{B(\underline{S}^s \to S^s)} \les C \| v \|_{L^\infty_t L^2_x}^\theta \| v \|_{L^\infty_t B^{-1}_{4, \infty}}^{1-\theta}. $$
 \item For any interval $I\subset \RR$ and any (real-valued) free wave $v= \Re(e^{it|\nabla|} g) \in C(I;L^2)$, the Duhamel operator $\mc{I}_{v}:N^s(I)\to \underline{S}^s(I)$ is bounded.
\end{enumerate}
\end{corollary}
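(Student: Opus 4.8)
The plan is to observe that Corollary \ref{cor:bounded maps} is a bookkeeping consequence of estimates already in hand, so that essentially no new analysis is required. I would treat the three operator bounds in part (1) in turn, and then deduce part (2) from Theorem \ref{thm:non-unif Stz}.

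For the bound $\|\mc{I}_0\|_{B(N^s\to\underline{S}^s)}\les C$, I would simply invoke the energy inequality \eqref{eqn:lem energy ineq:energy} of Lemma \ref{lem:energy ineq} with $f=0$: since $\mc{I}_0[F]$ vanishes at the base point $t_0$, we get $\|(\mc{I}_0F)_\lambda\|_{\underline{S}^s_\lambda}\lesa\|F_\lambda\|_{N^s_\lambda}$ for each $\lambda\in 2^\N$, and squaring and summing in $\lambda$ gives the claim with a constant independent of $t_0$. For the bound $\|v\|_{B(S^s\to N^s)}\les C\|v\|_{L^\infty_tL^2_x}$ (multiplication by $v$), I would first write the real wave as $v=\tfrac12 e^{it|\nabla|}g+\tfrac12 e^{-it|\nabla|}\overline g$, with each summand a free wave (annihilated by $i\p_t+|\nabla|$ and $i\p_t-|\nabla|$ respectively), and then apply Theorem \ref{thm:main schro est nonres} with $d=4$, the given $s\in[0,1)$, and $\epsilon=0$: for these parameters $\tfrac d2=2$, the weight $\lr{\nabla}^{-\epsilon}$ is the identity, and the forcing term $(i\p_t\pm|\nabla|)v$ vanishes, so $\|vu\|_{N^s}\lesa\|v\|_{L^\infty_tL^2_x}\|u\|_{S^s}$ follows. (The $L^\infty_tL^2_x$ norm of each half-wave equals $\tfrac12\|g\|_{L^2_x}$, which is comparable to $\|v\|_{L^\infty_tL^2_x}$ for free solutions of the wave equation, so nothing is lost in the splitting.)

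The only point requiring a small argument is the third bound, $\|\mc{I}_0 v\|_{B(\underline{S}^s\to S^s)}\les C\|v\|_{L^\infty_tL^2_x}^\theta\|v\|_{L^\infty_tB^{-1}_{4,\infty}}^{1-\theta}$. Applying Corollary \ref{cor:summed up besov gain} with $\ell=0$ to each half-wave yields, with the $\theta_0\in(0,1)$ and $r>2$ furnished there,
\[ \|\mc{I}_0(vu)\|_{S^s}\lesa\|v\|_{L^\infty_tB^{\frac4r-2}_{r,\infty}}^{\theta_0}\,\|v\|_{L^\infty_tL^2_x}^{1-\theta_0}\,\|u\|_{\underline{S}^s}. \]
Since $r$ may be taken in $(2,4]$, I would then reconcile the Besov exponents $\tfrac4r-2$ and $-1$ by the elementary interpolation inequality $\|w\|_{B^{\frac4r-2}_{r,\infty}}\lesa\|w\|_{L^2_x}^{1-\alpha}\|w\|_{B^{-1}_{4,\infty}}^{\alpha}$ with $\alpha:=2-\tfrac4r\in(0,1)$ (so that $\tfrac1r=\tfrac{1-\alpha}2+\tfrac\alpha4$ and $\tfrac4r-2=-\alpha$), obtaining
\[ \|\mc{I}_0(vu)\|_{S^s}\lesa\|v\|_{L^\infty_tL^2_x}^{1-\alpha\theta_0}\,\|v\|_{L^\infty_tB^{-1}_{4,\infty}}^{\alpha\theta_0}\,\|u\|_{\underline{S}^s}, \]
which is exactly the asserted bound with $\theta:=1-\alpha\theta_0$. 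I expect this exponent matching — together with the harmless passage between the real wave $v$ and its two half-wave constituents — to be the only place where anything beyond a verbatim citation is needed; there is no essential obstacle.

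For part (2), I would transplant the non-uniform Strichartz estimate Theorem \ref{thm:non-unif Stz} from $\R$ to an arbitrary interval $I$. Given the free wave $v=\Re(e^{it|\nabla|}g)$ on $I$ (so $g\in L^2$ is determined) and $F\in N^s(I)$, extend $F$ to $\widetilde F\in N^s$ with $\|\widetilde F\|_{N^s}\les 2\|F\|_{N^s(I)}$; Theorem \ref{thm:non-unif Stz} produces the global solution $\widetilde u$ of $(i\p_t+\De-\Re(e^{it|\nabla|}g))\widetilde u=\widetilde F$ with $\widetilde u(t_0)=0$ and $\|\widetilde u\|_{\underline{S}^s}\lesa C(g)\|\widetilde F\|_{N^s}$. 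Since $\widetilde F|_I=F$ and $\widetilde u(t_0)=0$, the uniqueness of $\mc{I}_v$ gives $\widetilde u|_I=\mc{I}_vF$ on $I$, whence $\|\mc{I}_vF\|_{\underline{S}^s(I)}\les\|\widetilde u\|_{\underline{S}^s}\lesa C(g)\|F\|_{N^s(I)}$, establishing boundedness of $\mc{I}_v:N^s(I)\to\underline{S}^s(I)$.
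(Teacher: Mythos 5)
Your proposal is correct and takes essentially the same route as the paper's (very terse) proof: the first bound is the energy inequality \eqref{eqn:lem energy ineq:energy}, the second is Theorem \ref{thm:main schro est nonres} with $\epsilon=0$ applied to the two half-waves, the third is Theorem \ref{thm:besov gain} via Corollary \ref{cor:summed up besov gain} together with the elementary Besov interpolation you spell out, and part (2) is Theorem \ref{thm:non-unif Stz} restricted to $I$ exactly as you do. The one point you gloss over is that applying Corollary \ref{cor:summed up besov gain} to each half-wave produces the norms of $e^{\pm it|\nabla|}g$ rather than of the real wave $v=\Re(e^{it|\nabla|}g)$ on the right-hand side; to get the bound literally as stated one needs the (short, frequency-localized, small-time) observation that $\sup_t\|e^{it|\nabla|}g\|_{B^{\frac{4}{r}-2}_{r,\infty}}\lesssim \sup_t\|v(t)\|_{B^{\frac{4}{r}-2}_{r,\infty}}$ and similarly for $L^2_x$, or else one simply records the bound with the complex wave's norms, which is all that is ever used later in the paper.
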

\begin{proof}
The first claim follows from the energy inequality \eqref{eqn:lem energy ineq:energy} and the bilinear estimates contained in Theorem \ref{thm:main schro est nonres} and Theorem \ref{thm:besov gain} while the second is a direct consequence of Theorem \ref{thm:non-unif Stz}.
\end{proof}

To simplify the argument to follow, we also give a restatement of Lemma \ref{lem:Duhamel decompose} in the special case $X = S^s$, $\underline{X} = \underline{S}^s$, and $Y = N^s$.

\begin{lemma} \label{lem:Duhamel decompose special}
Let $d=4$, $t_0\in\R$, $t_0\in I$ be an interval, and $V=V_1+V_2$ with $V_j\in L^\infty(I;L^2(\R^4))$. Suppose that we have finite numbers $M,\e$ such that
\EQ{
 \| \mc{I}_0 \|_{B(N^s \to \underline{S}^s)} + \|\mc{I}_{V_j}\|_{B(N^s \to \underline{S}^s)} + \|V_j\|_{B(S^s \to N^s)} \le M,
\pq \|\mc{I}_0V_1\mc{I}_0V_2\|_{B(\underline{S}^s\to S^s)}\le\e,}
for $j=1, 2$. Let $C:=\|I\|_{B(\underline{S}^s\to S^s)}$ and $\ti M:=(1+CM^2)^2$. If $\ti M\e<1$, then we have
\EQ{
 \|\mc{I}_V \|_{B(N^s \to \underline{S}^s)} \le (1-\ti M\e)^{-1}M \ti M.}
\end{lemma}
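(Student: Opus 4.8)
The plan is to deduce this from the general Lemma~\ref{lem:Duhamel decompose}, of which it is the specialisation to the concrete spaces $X_0,Y_0$ as in \eqref{eqn:definition X0 Y0}, $\underline{X}=\underline{S}^s(I)$, $X=S^s(I)$, $Y=N^s(I)$, with the roles of $V_1$ and $V_2$ interchanged (so that the small quantity $\|\mc{I}_0 V_2\mc{I}_0 V_1\|_{B(\underline{X}\to X)}$ there becomes the hypothesised $\|\mc{I}_0 V_1\mc{I}_0 V_2\|_{B(\underline{S}^s\to S^s)}\le\e$ here). I would first check that the abstract hypotheses of Lemma~\ref{lem:Duhamel decompose} are met: the numbers $M,\e$ are those in the statement, $\underline{S}^s\subset S^s\subset X_0$ with $C=\|I\|_{B(\underline{S}^s\to S^s)}$ an absolute constant by Lemma~\ref{lem:strichartz control} and the Strichartz estimate \eqref{eqn:stand stri}, the operators $\mc{I}_0$ and $\mc{I}_{V_j}$ are well defined $N^s\to\underline{S}^s$ by the energy inequality Lemma~\ref{lem:energy ineq}, Corollary~\ref{cor:bounded maps} and Theorem~\ref{thm:non-unif Stz}, and the multiplication bounds $\|V_j\|_{B(S^s\to N^s)}\le M$ are assumed. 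This already yields a bounded operator $\mc{I}_V\colon N^s\to\underline{S}^s$ with the (cruder) constant coming from Lemma~\ref{lem:Duhamel decompose}, which is in fact all that is needed in Subsection~\ref{ss:proof of unif stri}.

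To obtain exactly the sharp constant asserted here, I would rerun the final step of the proof of Lemma~\ref{lem:Duhamel decompose} in the concrete setting. Iterating the Duhamel expansions \eqref{exp IV by IV' 1}--\eqref{exp IV by IV'} and expanding $\mc{I}_{V_1}$ twice via $\mc{I}_0$ produces, exactly as in \eqref{eqn:IV ident with V1 V2} with $1\leftrightarrow 2$,
\[ \BR{1-A\,\mc{I}_0 V_1 \mc{I}_0 V_2}\mc{I}_V=\mc{I}_{V_2}+\mc{I}_{V_2}V_1\mc{I}_{V_1},\qquad A:=1+\mc{I}_{V_2}V_2+\mc{I}_{V_2}V_1(1+\mc{I}_{V_1}V_1). \]
The right-hand side is bounded $N^s\to\underline{S}^s$ by $M(1+CM^2)\le M\ti{M}$, while each summand of $A$ is either the identity or of the form $\mc{I}_{V_j}(\cdots)$, so $A$ is bounded on $\underline{S}^s$ by $\lesa(1+CM^2)^2=\ti{M}$. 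The point specific to this setting is that $\mc{I}_0$ maps $N^s$ into the \emph{strong} space $\underline{S}^s$: every factor $\mc{I}_0$ already lands in $\underline{S}^s$, so the whole Neumann analysis for $[1-A\,\mc{I}_0 V_1 \mc{I}_0 V_2]^{-1}$ can be run directly in $B(\underline{S}^s\to\underline{S}^s)$, with $\|A\,\mc{I}_0 V_1 \mc{I}_0 V_2\|_{B(\underline{S}^s\to\underline{S}^s)}\lesa\ti{M}\e$; unlike in the general lemma (where $\mc{I}_0 V_2\mc{I}_0 V_1$ is only small from $\underline{X}$ into the strictly larger $X$), no even/odd splitting of the series is needed and no square root appears. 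Since $\ti{M}\e<1$ the bracket is invertible on $\underline{S}^s$ with Neumann-series norm $\le(1-\ti{M}\e)^{-1}$, and composing gives $\|\mc{I}_V\|_{B(N^s\to\underline{S}^s)}\le(1-\ti{M}\e)^{-1}M\ti{M}$.

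The whole argument is essentially mechanical operator composition. The only point demanding genuine care --- and so the main (mild) obstacle --- is the bookkeeping of which composition in the Neumann series lands in $\underline{S}^s$ versus $S^s$, since it is precisely this that upgrades the exponent of $(1+CM^2)$ from $6$ (general lemma) to $2$ and removes the square root; beyond that, nothing new enters, and one may alternatively just quote Lemma~\ref{lem:Duhamel decompose} verbatim.
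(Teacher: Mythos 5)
Your first paragraph is exactly the paper's route: the paper offers no separate proof of Lemma \ref{lem:Duhamel decompose special} and simply presents it as the specialisation of Lemma \ref{lem:Duhamel decompose} to $X=S^s$, $\underline{X}=\underline{S}^s$, $Y=N^s$ (with $V_1,V_2$ interchanged), and as you note the crude constant from the general lemma is all that the application in \S\ref{ss:proof of unif stri} ever uses.

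Your attempt to justify the sharper constants, however, has a genuine gap. You claim that $\|A\,\mc{I}_0V_1\mc{I}_0V_2\|_{B(\underline{S}^s\to\underline{S}^s)}\lesa\ti M\e$, so that the Neumann series converges in $B(\underline{S}^s\to\underline{S}^s)$ without the even/odd pairing. But the hypothesis only gives smallness of $\mc{I}_0V_1\mc{I}_0V_2$ as a map from $\underline{S}^s$ into the \emph{weaker} space $S^s$; its norm into $\underline{S}^s$ is merely bounded (by roughly $CM^4$ via $\|\mc{I}_0\|_{B(N^s\to\underline{S}^s)}$ and $\|V_j\|_{B(S^s\to N^s)}$), not small. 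This is not a removable technicality: the smallness fed into this lemma comes from \textbf{(A1)}/\textbf{(A2)}, i.e.\ from Corollary \ref{cor:summed up besov gain}, which — as the Remark following it stresses — only controls the weak norm $S^s$ while consuming the strong norm $\underline{S}^s$. Consequently $K:=A\,\mc{I}_0V_1\mc{I}_0V_2$ satisfies $\|K\|_{B(\underline{S}^s\to S^s)}\lesa\e$ and $\|K\|_{B(S^s\to\underline{S}^s)}\lesa 1$, so only $K^2$ is small on $\underline{S}^s$, and the square root in $(1-(\ti M\e)^{1/2})^{-1}$ reappears exactly as in the general lemma. (In fairness, the paper's own restatement with $\ti M=(1+CM^2)^2$ and no square root does not literally follow from Lemma \ref{lem:Duhamel decompose} either; the discrepancy is harmless because only the qualitative conclusion — $\mc{I}_V$ bounded with a constant depending only on $M$, $C$ once $\e$ is small enough — is ever used. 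But your argument for removing the square root is not correct as written, and you should fall back on quoting the general lemma verbatim, accepting its constants.)
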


In the remainder of this subsection, we give the proof of Theorem \ref{thm:unif Stz} assuming the following key properties:
\begin{enumerate}[\bfseries ({A}1)]
\item  If $V_n^1$ and $V_n^2$ are two asymptotically orthogonal profiles as in \eqref{eqn:defn of profiles},  we have
\EQ{ \label{dich in Duh}
 \|\mc{I}_0v_n^1\mc{I}_0 v_n^2\|_{B(\underline{S}^s \to S^s)} \to 0}
 where $v_n^j = \Re( V_n^j)$.

\item  If a sequence of free waves $\Ga_n$ is bounded in $L^2$ and vanishing in $L^\I_t B^{-1}_{4, \infty}$, then letting $w_n = \Re(\Gamma_n)$ we have
\EQ{ \label{Bes dec}
 \|\mc{I}_0w_n\|_{B(\underline{S}^s \to S^s)}\to 0.}

\item  If $B^*<\|W^2\|_2$, then the desired estimate holds in the case of single profile. More precisely, if $V^1_n$ is a profile \eqref{eqn:defn of profiles}, and $v^1_n = \Re(V^1_n)$, then
\EQ{ \label{single prof in Hs}
 \|V^1_n\|_{L^\I_tL^2}=\|V^1\|_{L^\I_tL^2}<\|W^2\|_2 \implies \sup_{n}\|\mc{I}_{v_n^1}\|_{B(N^s\to \underline{S}^s)}<\I.}
\end{enumerate}

\begin{proof}[Proof of Theorem \ref{thm:unif Stz}] Let $v:=\Re(V)$ and let $\mc{U}_v(t;t_0) f$ denote the solution to the homogeneous equation
            $$ (i\p_t + \Delta) u  =  v u, \qquad u(t_0) = f. $$
In view of the identity
            $$ \mc{U}_{v} (t;t_0) f = \mc{I}_{v}\big( v e^{it\Delta} f \big) - e^{it\Delta} f $$
an application of Theorem \ref{thm:main schro est nonres} implies that for any free wave $V = e^{it|\nabla|} g$ we have
        $$ \| \mc{U}_{v} (t;t_0) f\|_{\underline{S}^s} \les  \| \mc{I}_{v} \|_{B(N^s \to \underline{S}^s)} \| V e^{it\Delta} f \|_{N^s}  + \| e^{it\Delta} f \|_{\underline{S}^s} \lesa \big( 1 + \| \mc{I}_{v} \|_{B(N^s \to \underline{S}^s)} \| V \|_{L^\infty_t L^2_x}\big) \| f \|_{H^s}$$
where the implied constant is independent of $V$. Consequently, to prove Theorem \ref{thm:unif Stz} it suffices to prove that we have a uniform bound for the Duhamel operators $\mc{I}_{v}$. An application of Corollary \ref{cor:bounded maps} implies that $\| \mc{I}_{v} \|_{B(N^s \to \underline{S}^s)}<\infty$ but with a bound potentially depending on $V$. Suppose for contradiction that the uniform estimate claimed in Theorem \ref{thm:unif Stz} fails at some $s\in[0,1)$ and $B\in(0,\|W^2\|_2)$. Then we have the threshold mass $B^*\in(0,\|W^2\|_{L^2})$ defined by
\EQ{
 \pt M(B):=\sup\{\|\mc{I}_v\|_{B(N^s\to \underline{S}^s)} \mid \text{$v = \Re(V)$, $V$ free wave with $\|V\|_{L^\I_tL^2_x}\le B$}\},
 \pr B^*:=\sup\{B>0 \mid M(B)<\I\},}
and a sequence $V_n$ of free waves satisfying
\begin{equation}\label{eqn:thm unif str:assump on V_n}
 \|V_n\|_{L^\I_tL^2_x} \nearrow B^*, \pq \|\mc{I}_{v_n}\|_{B(N^s\to \underline{S}^s)} \to \I
\end{equation}
with $v_n = \Re(V_n)$. Note that $B^*>0$ is ensured by the small perturbation of the free case using \eqref{small pert IV} and the bounds (i) in Corollary \ref{cor:bounded maps}.

Let \eqref{prof decop} be the profile decomposition of $V_n$, after passing to a subsequence if necessary, and define $v_n = \Re(V_n)$, $v^j_n = \Re(V^j_n)$, and $w_n^J = \Re(\Gamma^J_n)$. We begin by considering the case where all the $L^\infty_t L^2_x$ mass concentrates in one profile, that is when $V_n = V^1_n + \Gamma^1_n$ with
        $$ \lim_{n \to \infty} \| V_n \|_{L^\infty_t L^2_x} = \| V^1\|_{L^\infty_t L^2_x}, \qquad \Longleftrightarrow  \qquad  \lim_{n\to\I}\|\Ga^1_n\|_{L^\I_t L^2_x}=0.$$
As we have the uniform bound \eqref{single prof in Hs} for a single profile, we have $ \sup_{n} \| \mc{I}_{v^1_n} \|_{B(N^s \to \underline{S}^s)} \les M <  \infty$. Noting the identity $\mc{I}_{v_n} = \mc{I}_{v^1_n} + \mc{I}_{v^1_n} w^1_n \mc{I}_{v_n}$ we obtain
    $$ \| \mc{I}_{v_n} \|_{B(N^s \to \underline{S}^s)} \les \| \mc{I}_{v_n^1} \|_{B(N^s \to \underline{S}^s)} ( 1 + \| w^1_n \|_{B(\underline{S}^s \to N^s)} \| \mc{I}_{v_n} \|_{B(N^s \to \underline{S}^s)})$$
and hence Corollary \ref{cor:bounded maps} together with the fact that the error $w^1_n$ vanishes in $L^2_x$, implies that for all sufficiently large $n$ we have
    $$ \| \mc{I}_{v_n} \|_{B(N^s \to \underline{S}^s)} \les M + \frac{1}{2}  \| \mc{I}_{v_n} \|_{B(N^s \to \underline{S}^s)} \qquad \limsup_{n \to \infty} \| \mc{I}_{v_n} \|_{B(N^s \to \underline{S}^s)} \les 2 M.$$
In other words we have a contradiction to the choice of the sequence $V_n$. Thus the single profile case holds.

We now dispose of the case with no profile, in other words when $V_n = \Gamma^1_n$, hence $\| V_n \|_{L^\infty_t L^2_x} \nearrow B^*$ and $ \| V_n \|_{L^\infty_t \dot{B}^{-1}_{4, \infty}}  \to 0$. We argue as in the automatic upgrading property (II). Two applications of the identity \eqref{exp IV by I0} imply that
            $$ \mc{I}_{v_n} = \mc{I}_0 (  1 + v_n \mc{I}_{v_n}) = \mc{I}_0 + \mc{I}_0 v_n \mc{I}_0 + \mc{I}_0 v_n \mc{I}_0 v_n \mc{I}_{v_n}$$
and hence an application of the bounds in Corollary \ref{cor:bounded maps} implies that
        \begin{align*}
            \| \mc{I}_{v_n} \|_{B(N^s \to \underline{S}^s)} &\les \| \mc{I}_{0} \|_{B(N^s \to \underline{S}^s)} + \| \mc{I}_{0} \|_{B(N^s \to \underline{S}^s)}^2 \| v_n \|_{B(S^s \to N^s)}\\
            &\qquad + \| \mc{I}_{0} \|_{B(N^s \to \underline{S}^s)} \| v_n \|_{B(S^s \to N^s)}  \| \mc{I}_{0} v_n \|_{B(\underline{S}^s \to S^s)} \| \mc{I}_{v_n} \|_{B(N^s \to \underline{S}^s)} \\
            &\lesa 1 +  \| \mc{I}_{0} v_n \|_{B(\underline{S}^s \to S^s)} \| \mc{I}_{v_n} \|_{B(N^s \to \underline{S}^s)}
        \end{align*}
with the implied constant independent of $n$. Therefore, we conclude via  \textbf{(A2)} that $\limsup_n \| \mc{I}_{v_n} \|_{B(N^s \to \underline{S}^s)} < \infty$ which again contradicts the definition of the sequence $V_n$.

The remaining case is when the initial profile satisfies $0< \| V^1 \|_{L^\infty_t L^2_x} < B^*$. In this case, the asymptotic orthogonality of the profiles in $L^2$ implies that we have a uniform bound below the threshold $B^*$, namely we have some $B< B^*$ such that for any $J\g 1$
        \begin{equation}\label{eqn:profil belo threshold}
             \|V^1_n\|_{L^\I_tL^2_x}=\|V^1_1\|_{L^\I_tL^2_x} \les B,
 \qquad \limsup_{n\to\I}\|V^{2,J}_n\|_{L^\I_tL^2_x}\les B,  \qquad \limsup_{n\to \infty} \| \Gamma^J_n \|_{L^\infty_t L^2_x} \les B
        \end{equation}
where we introduce the notation
    $$V^{J_1,J_2}_n:=\sum_{j=J_1}^{J_2} V^j_n $$
with the convention that $V^{J_1, J_2}_n = 0$ if $J_1> J_2$. The definition of the threshold $B^*$ together with \eqref{eqn:profil belo threshold} implies that there exists $M>0$ such that for any $J\g1$ we have the uniform bounds
     \begin{equation}\label{eqn:unif bdd profiles}
        \limsup_{n \to \infty}  \| \mc{I}_{v^1_n} \|_{B(N^s \to \underline{S}^s)} + \limsup_{n \to \infty} \| \mc{I}_{v^{2, J}_n} \|_{B(N^s \to \underline{S}^s)} + \limsup_{n \to \infty} \| \mc{I}_{w^J_n} \|_{B(N^s \to \underline{S}^s)} \les M < \infty.
     \end{equation}
An application of Lemma \ref{lem:Duhamel decompose special} together with \textbf{(A1)} implies that for any $J\g 1$ we have the bound
            $$\limsup_{n \to \infty} \| \mc{I}_{v^{1, J}_n} \|_{B(N^s \to \underline{S}^s)} =  \limsup_{n \to \infty} \| \mc{I}_{v^1_n + v^{2, J}_n} \|_{B(N^s \to \underline{S}^s)} \les 2 M ( 1 + M^2)^4 < \infty.$$
Moreover, via Corollary \ref{cor:bounded maps}, we obtain
        $$ \| \mc{I}_0  v^{1, J}_n  \mc{I}_0 w^J_n \|_{B( \underline{S}^s \to S^s)} \les \| \mc{I}_0 \|_{B(N^s \to S^s)} \| v^{1, J}_n \|_{B( S^s \to N^s)} \| \mc{I}_0 w^J_n \|_{B(\underline{S}^s \to S^s)} \lesa \| \mc{I}_0 w^J_n \|_{B(\underline{S}^s \to S^s)}$$
where the implied constant is independent of $J$. In particular, choosing $J$ sufficiently large depending on $M$, the property \textbf{(A2)} together with the uniform bounds \eqref{eqn:unif bdd profiles} and another application of Lemma \ref{lem:Duhamel decompose special} implies that
      $$ \limsup_{n\to \infty} \| \mc{I}_{v^{1, J}_n + w^J_n} \|_{B(N^s \to \underline{S}^s)} < \infty. $$
Since $V_n = V^{1, J}_n + \Gamma^J_n$, this again contradicts the choice of the sequence $V_n$. Therefore Theorem \ref{thm:unif Stz} follows.
\end{proof}

Thus we are left with \eqref{dich in Duh}--\eqref{single prof in Hs} to be proven.

\subsection{Proof of (A2): Decay for the remainder}\label{subsec:a2}
Corollary \ref{cor:summed up besov gain} gives $0< \theta < 1$ such that
        $$ \| \mc{I}_0 w_n \|_{B(\underline{S}^s \to S^s)} \lesa \| \Gamma_n \|_{L^\infty_t L^2_x}^{1-\theta} \| \Gamma_n \|_{L^\infty_t B^{-1}_{4, \infty}}^\theta.$$
Hence letting $n\to \infty$, \eqref{Bes dec} follows. Note that the $X^{s-1,1}$ component of $S^s$ is treated easily by the product estimate
\EQ{ \label{neg prod}
 \|Vu\|_{H^{s-1}} \lec \|V\|_{B^{-\de}_{2(-\de),\I}} \|u\|_{B^s_{4,2}}, \pq 0<\de<\min(s,1-s), \pq \frac{1}{2(-\de)}=\frac12+\frac{-\de}{4},}
where the Besov norm of $V$ is controlled by interpolation between $L^2$ and $\dot H^{-1}_4$. On the other hand, the Strichartz component of $S^s$ is treated by Theorem \ref{thm:besov gain}, which is indeed the hardest part in the entire proof of the uniform Strichartz estimate.

\subsection{Proof of (A1): Decay for orthogonal profiles}\label{subsec:a1}
Here we prove \eqref{dich in Duh}.
Thanks to the uniform bounds on the operators, namely that Theorem \ref{thm:main schro est nonres} gives the uniform bound
        \begin{equation}\label{eqn:unif bound orthog}
            \| \mc{I}_0 v^j_n \|_{B(S^s \to \underline{S}^s)} \lesa \| V^j_n \|_{L^\infty_t L^2_x} = \| V \|_{L^\infty_t L^2_x},
        \end{equation}
we have
        \begin{equation}\label{eqn:unif bound iterated duhamel}\| \mc{I}_0 v^1_n \mc{I}_0 v^2_n \|_{B(\underline{S}^s \to S^s)} \lesa \| V^1_n \|_{L^\infty_t L^2_x} \|V^2_n \|_{L^\infty_t L^2_x} = \| V^1 \|_{L^\infty_t L^2_x} \|V^2 \|_{L^\infty_t L^2_x},  \end{equation}
 and hence we may always assume that $V^j = e^{it|\nabla|} \q^j$ with the spatial profile $\fy^j\in L^2(\R^4)$ belonging to any dense subset of nice functions, e.g., smooth and compactly supported in $x\in\R^4$ or in the Fourier space.

If the two profiles are separated by scaling, then we exploit the high-low and low-high gains. If they are separated in space-time, then we can reduce to the case $s=0$ by the complex interpolation, and use Lemma 8.6 for space separation and the dispersive decay for time separation.

\subsubsection{Low frequency decay}\label{subsubsec:low}
Let us first dispose of expanding profiles, namely the case of $\sigma^j_n\to+0$. Since
    \EQ{
 \|\sigma^2 V(\sigma t, \sigma x)\|_{L^2_tB^s_{4, 2}} \lesa  \sigma^{1/2} \lr{\sigma}^s \|V\|_{L^2_t B^s_{4, 2}},}
an application of \eqref{eqn:thm non-unif Stz:potential in L2time} gives
    $$ \| \mc{I}_0 v^j_n \|_{B(S^s \to \underline{S}^s)} \lesa  \sigma_n^{1/2} \lr{\sigma_n}^s \|V^j\|_{L^2_t B^s_{4, 2}}. $$
In particular, in view of the uniform bound \eqref{eqn:unif bound orthog}, we  conclude that
        \EQ{ \label{eqn:discard sigma to 0}
 \sigma_n^j\to+0 \implies \forall s\in [0,1):\; \ \|\mc{I}_0v_n^j\|_{B(S^s \to \underline{S}^s)}\to 0 \pq(n\to\I)}
and hence we may discard all the profiles $V_n^j$ with $\sigma_n^j\to 0$.
In other words, we may assume that either $\sigma^j_n\equiv 1$ or $\sigma^j_n\to\I$ for each $j$.

\subsubsection{Scale separation}\label{subsubsec:scale}
Next we consider the case of scale separation for temporary profiles, namely $\sigma^2_n/\sigma^1_n\to\I$ or $\sigma^1_n/\sigma^2_n\to\I$. Note that $\sigma^j_n\ge 1$ by the previous section. An application of Theorem \ref{thm:main schro est nonres} together with Corollary \ref{cor:summed up besov gain} implies that for every $0\les s < 1$ there exists $\epsilon>0$ such that
        $$ \| \mc{I}_0 v^1_n \mc{I}_0 v^2_n \|_{B(\underline{S}^s \to S^s)} \lesa \min\Big\{ \| V^1_n \|_{L^\infty_t H^\epsilon_x} \| V^2_n \|_{L^\infty_t H^{-\epsilon}_x}, \| V^1_n \|_{L^\infty_t  H^{-\epsilon}_x} \| V^2_n \|_{L^\infty_t H^{\epsilon}_x}\Big\}. $$
Since
   $$ \min\Big\{ \| V^1_n \|_{L^\infty_t H^\epsilon_x} \| V^2_n \|_{L^\infty_t H^{-\epsilon}_x}, \| V^1_n \|_{L^\infty_t  H^{-\epsilon}_x} \| V^2_n \|_{L^\infty_t H^{\epsilon}_x}\Big\} \lesa \Big(\min\Big\{ \frac{\sigma^1_n}{\sigma_n^2},  \frac{\sigma^2_n}{\sigma_n^1} \Big\}\Big)^\epsilon $$
we conclude that
    $$ |\log( \sigma_n^1/\sigma_n^2)| \to \infty \qquad \Longrightarrow \qquad \| \mc{I}_0 V^1_n \mc{I}_0 V^2_n \|_{B(\underline{S}^s \to S^s)} \to 0. $$
Thus we finish the case of scale separation.

\subsubsection{Space-time separation}\label{subsubsec:st-sep}
It remains to consider the case $\sigma^1_n=\sigma^2_n=:\sigma_n$.
Here we approximate the wave profile by decomposition in time: For any $\fy\in L^2$ and $\e\in(0,1)$, there is $\chi\in\cS(\R)$ such that
\EQ{ \label{eqn:cutoff prop}
 \supp\mathcal{F}\chi\subset[-\e,\e],\pq |\chi|\le 1,\pq \|(1-\chi(t))e^{it|\nabla|}\fy\|_{L^\I_t\dot H^{-1}_4}<\e.}
Since $\e<1$, we have
\EQ{
 \supp\mathcal{F}_{t,x}[\chi(t)e^{\pm it|\nabla|}\fy]  = \supp[\hat\chi(\ta)*\de(\ta\pm|\x|)\hat\fy(\x)]
 \subset\{|\ta|\le|\x|+1\}}
and the same inclusion for $\supp\mathcal{F}_{t,x}[(1-\chi(t))e^{\pm it|\nabla|}\fy]$, so that we can keep using the same non-resonance property for the free waves after the cut-off by $\chi(t)$.
In particular, for any free wave $v = \Re(V) = \Re( e^{it|\nabla|} \varphi)$, an application of \eqref{eqn:thm main schro est nonres:goal} implies that
\EQ{ \label{unif bd cutin t}
 \sup_n \sup_{\epsilon < 1} \|\fg_n \chi(t) v\|_{B(S^s \to N^s)} \lesa \| V \|_{L^\infty_t L^2_x}   }
while Theorem \ref{thm:besov gain} gives $\theta>0$ such that
\EQ{
 \sup_n\|\mc{I}_0\fg_n(1-\chi(t))v\|_{B(\underline{S}^s \to S^s)} \lesa \| (1-\chi(t) )  V \|_{L^\infty_t B^{-1}_{4, \infty}}^\theta \| V \|_{L^\infty_t L^2_x}^{1-\theta}. }
Consequently, decomposing the profiles $V^j=\chi V^j+(1-\chi)V^j$ using the properties \eqref{eqn:cutoff prop} imply that it suffices to show that
    $$ \|\mc{I}_0 [\fg^1_n\chi v^1]\mc{I}_0[\fg^2_n\chi v^2]\|_{B(\underline{S}^s \to S^s)} \to 0 \pq(n\to\I)$$
where as above, $v^j = \Re(V^j)$. This can be reduced further after observing that another application of Theorem \ref{thm:besov gain} implies that we have the high-low gain
    $$ \big\|P_{\lambda_0} \mc{I}_0\big( [\fg^1_n\chi v^1] \mc{I}_0([\fg^2_n\chi v^2] u_{\lambda_1})\big) \big\|_{S^s}\lesa \Big( \frac{\min\{\lambda_0, \lambda_1\}}{\max\{\lambda_0, \lambda_1\}}\Big)^\theta \| \chi \|_{L^\infty_t} \| V^1 \|_{L^\infty_t L^2_x} \| V^2 \|_{L^\infty_t L^2_x} \| u_{\lambda_1} \|_{\underline{S}^s_{\lambda_1}}$$
and hence it is enough to prove that
\EQ{ \label{eqn:reduced goal}
 \|[\fg^1_n\chi v^1]\mc{I}_0[\fg^2_n\chi v^2]\|_{B(S^0 \to N^0)} \to 0 \pq(n\to\I).}
Since $S^0\subset X_0$ and $Y_0\subset N^0$, we may replace the operator norm with $B(X_0\to Y_0)$. Then by the scaling invariance for $u(t,x)\mapsto \la^{d/2}u(\la^2 t,\la x)$, it suffices to show that
\EQ{
 \|[\ti\fg^1_n\chi v^1]\mc{I}_0[\ti\fg^2_n\chi v^2]\|_{B(X_0\to Y_0)} \to 0,}
where $\ti\fg^j_n$ is defined by
\EQ{
 \ti\fg^j_n V(t,x) := V(t/\sigma^j_n-t^j_n,x-x^j_n).}

Now we use approximation by step functions in time:
\EQ{ \label{time decop}
 \chi(t)e^{it|\nabla|}\fy(x) = \sum_{k=1}^K I_k(t)\fy_k(x)  + R(t,x),}
such that $I_1\etc I_K$ are mutually disjoint bounded intervals, $\fy_k\in L^2(\R^4)$ and $\|R\|_{L^\I_tL^2}<\e$.
Since $\mc{I}_0:Y_0\to X_0$ and $\|F\|_{B(X_0\to Y_0)}\lec\|F\|_{L^\I_tL^2}$, the decomposition \eqref{time decop} yields
\EQ{
 \pt\|[\ti\fg^1_n\chi v^1]\mc{I}_0[\ti\fg^2_n\chi v^2]\|_{B(X_0\to Y_0)}
 \le \sum_{k,l} \|[\ti\fg^1_n I_k(t)\fy_k(x)]\mc{I}_0[\ti\fg^2_n I_l(t)\fy_l(x)\|_{B(X_0\to Y_0)}+O(\e),}
so that we can further reduce to
\EQ{
 T^j_n(t,x):=\ti\fg^j_n I_j(t)\fy^j(x) \implies \|T^1_n\mc{I}_0 T^2_n\|_{B(X_0\to Y_0)}\to 0 \pq(n\to\I),}
for any bounded intervals $I_1,I_2$ and any $\fy^1,\fy^2\in L^2$.

In the case of time separation $|t^1_n-t^2_n|\to\I$, we have $|t^1_n-t^2_n|>2(\sup I_1 + \sup I_2)$ for large $n$, then the dispersive decay estimate yields
\EQ{
 \|T^1_n(t)\mc{I}_0T^2_nu\|_{2_*}
 \pt\lec \|\fy^2\|_{2_*}\int |t-s|^{-d/2}I_1(t/\sigma_n-t^1_n)I_2(s/\sigma_n-t^2_n)\|\fy^1\|_2 \|u(s)\|_2 ds
 \pr\lec \|\fy^2\|_{2_*}\|\fy^1\|_1\|u\|_{L^\I_tL^2}(\sigma_n|t^1_n-t^2_n|)^{1-d/2} I_1(t/\sigma_n-t^1_n).}
Hence, using $d\ge 3$ and $\sigma_n\ge 1$,
\EQ{
 \|T^1_n\mc{I}_0T^2_nu\|_{B(L^\I_tL^2\to L^2_tL^{2_*})}
 \lec \|\fy^2\|_{L^{2_*}}\|\fy^1\|_1|t^1_n-t^2_n|^{1-d/2}\sigma_n^{(3-d)/2} \to 0. }

In the case of space separation $|x^1_n-x^2_n|\to\I$, Lemma \ref{lem:x sep} yields, after discarding the time restriction by intervals,
\EQ{
 \|T^1_n\mc{I}_0T^2_n\|_{B(L^2_tL^{2^*}\to L^2_tL^{2_*})}
 \le\|\fy^1(x-x^1_n)\mc{I}_0\fy^2(x-x^2_n)\|_{B(L^2_tL^{2^*}\to L^2_tL^{2_*})}\to 0.}
Therefore, we conclude that
\EQ{
 \limsup_{n\to\I}\|[\fg^1_n\chi v^1]\mc{I}_0[\fg^2_n\chi v^2]\|_{B(S^0\to N^0)} \lec \e.}
Sending $\e\to+0$, we obtain \eqref{eqn:reduced goal} as required. This completes the proof of \eqref{dich in Duh}.

\subsection{Proof of (A3): The uniform Strichartz in the single profile case}\label{subsec:a3}
The strategy to prove the single profile case is similar to that used in the proof of the radial case \cite{Guo2018}. We use a weight in the frequency for $H^s$ adapted to the profile, so that we can exploit the high-low gains while dealing with the main terms of the same frequency as in the $L^2$ ($s=0$) case, where we obtain the uniform estimate (with respect to concentration of the profile) by approximating the wave potential with a step function in time, thereby reducing the estimate to the static potential case.
Having only one profile makes the first step simpler.

Let $v_n = \Re(V_n)$ be a sequence in the case of a single profile, namely
\EQ{
 V_n = \fg_n V = (\si_n)^2 V(\si_n t, \si_nx), \pq \|V\|_{L^\I_t L^2}<\|W^2\|_2,}
where $V$ is a free wave (independent of $n$), and the translation parameters are removed, since the desired estimate is obviously translation invariant.

The goal of this subsection is to prove
\EQ{ \label{unifS singP}
 \limsup_{n\to\I}\|\mc{I}_{v_n}\|_{B(N^s\to \underline{S}^s)}<\I}
for $0\les s<1$. Since the uniform bound is trivial if $\si_n\equiv 1$, from \eqref{eqn:discard sigma to 0} we may also assume $\si_n\nearrow\I$.
Moreover, since the estimates are stable under small perturbations in $L^\infty_t L^2_x$, \eqref{small pert IV} together with a density argument implies we may assume that $V=e^{it|\na|}g$ with $\hat g\in C^\I(\R^4)$ and
\EQ{
 \x\in\supp\hat g \implies 1/\be<|\x|<\be}
for some $\be>1$. We fix this parameter $\be>1$ large enough so that the following arguments work.
Following \cite{Guo2018}, we introduce a sequence of weight functions $w_n:(0,\I)\to(0,\I)$ adapted to the frequencies $\si_n\to\I$ as follows. With a large parameter $\be>1$ to be fixed, let $\e_n:=(\log\be^2)/(\log(\si_n/\be^2))\to+0$ and
\EQ{
 w_n(r)=\CAS{1 &(0<r\le 1),\\ r^{1+\e_n} &(1\le r\le \si_n/\be^2),\\ \si_n &(\si_n/\be^2\le r \le \si_n\be^2),\\ r/\be^2 &(\si_n\be^2\le r<\I).}}
Then $w_n$ is continuous, increasing, and piecewise logarithmic-linear,
            \begin{equation}\label{eqn:w incres}
                 r/\be^2 \le w_n(r) \le \be^2 r, \pq \lim_{n\to\I}w_n(r)/r=1.
            \end{equation}
Moreover, by \cite[Lemma 5.1]{Guo2018},  for any $0<s<s'$, provided we choose $n\in \NN$ sufficiently large, we have for every $0<r<r'$ the key inequality
            \begin{equation}\label{eqn:w ineq key}
                w_n^s(r) \les w_n^s(r') \les w_n^s(r) \Big( \frac{r'}{r} \Big)^{s'}.
            \end{equation}

The weight $w_n(r)$ is needed to reduce to the case $s=0$.
\begin{lemma}
For $V_n$ as above, suppose that \eqref{unifS singP} holds for $s=0$. Then it holds also for $0\les s<1$.
\end{lemma}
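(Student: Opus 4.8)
The plan is to transfer the assumed $s=0$ estimate to $0<s<1$ (the case $s=0$ being the hypothesis) by means of the frequency-adapted weight $w_n$. First I would introduce weighted spaces $N^s_{w_n}$, $\underline{S}^s_{w_n}$, $S^s_{w_n}$, obtained by replacing each dyadic factor $\lambda^s$ in the norms $N^s_\lambda$, $\underline{S}^s_\lambda$, $S^s_\lambda$ by $w_n(\lambda)^s$. By \eqref{eqn:w incres} we have $w_n(\lambda)^s \approx \lambda^s$ with constants depending only on $\beta$ and not on $n$, so these norms are equivalent to $N^s$, $\underline{S}^s$, $S^s$, and it suffices to prove $\limsup_{n\to\infty}\|\mc{I}_{v_n}\|_{B(N^s_{w_n}\to \underline{S}^s_{w_n})}<\infty$. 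Since the spatial Fourier multipliers $w_n(|\nabla|)^s$ commute with $\mc{I}_0$ and with the Littlewood--Paley projections, the energy inequality of Lemma \ref{lem:energy ineq} and the Duhamel identities \eqref{exp IV by I0} hold verbatim in the weighted spaces, so the task is reduced to a weighted bilinear product estimate for $v_n u$ combined with a perturbative argument.

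The core is therefore a weighted bilinear estimate for $v_n u$, where (by the density/perturbation argument, or by summing the finitely many dyadic blocks of $V$) we may assume the free wave $V_n=\mathfrak{g}_n V$ has spatial Fourier support in $\{|\xi|\approx\sigma_n\}$. I would decompose the product by the output frequency $\lambda_0$ and the Schr\"odinger frequency $\lambda_1$; the support condition on the wave forces one of three situations. (a) The \emph{resonant band} $\lambda_0\approx\lambda_1\approx\sigma_n$: here $w_n$ equals the constant $\sigma_n$ on the whole band, so the weighted norms of the localised pieces are exactly $\sigma_n^s$ times their unweighted $s=0$ norms and the contribution is governed by the $s=0$ theory, with no gain needed. (b) A \emph{non--time--resonant} interaction, occurring whenever the frequencies are imbalanced (e.g.\ $\lambda_1\ll\sigma_n\approx\lambda_0$, or $\lambda_1\approx\sigma_n\gg\lambda_0$): then the output, or the Schr\"odinger block, is forced away from the paraboloid, and Bernstein applied to the high--modulation component of $N^s_{\lambda_0}$ (or Theorem \ref{thm:bilinear wave-schrodinger} in the high--high--to--low case) produces a \emph{full} power of $\min/\max$ in the frequencies. (c) A \emph{time--resonant but frequency--separated} interaction, $\lambda_0\approx\lambda_1\gg\sigma_n$ with the Schr\"odinger block near the paraboloid: this is precisely the regime controlled by the Besov gain $(\sigma_n/\lambda_1)^\theta$ of Theorem \ref{thm:besov gain}. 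In cases (b) and (c) I would combine the available frequency gain with the slow variation of $w_n$, namely \eqref{eqn:w ineq key} with a fixed $s<s'<1$, and sum over dyadic frequencies via Schur's test; here the mild convexity $w_n(r)=r^{1+\epsilon_n}$ together with $\beta$ large are exactly what turn the resulting geometric series into a constant $\delta(\beta)$ with $\delta(\beta)\to 0$ as $\beta\to\infty$.

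To conclude, I would split the product estimate for $v_n u$ into its resonant--band part and its off--resonant part, and feed this into \eqref{exp IV by I0}. The off--resonant part contributes a weighted operator of norm $\lesssim\delta(\beta)$, hence a small perturbation in the sense of \eqref{small pert IV} once $\beta$ is fixed large. The resonant--band part is, up to this small perturbation, the $s=0$ Duhamel operator with input and output frequency--restricted to $\{|\xi|\approx\sigma_n\}$ and multiplied by $\sigma_n^s$ on both sides; its uniform boundedness is equivalent, again through a $\beta$--small perturbation and the algebra of Subsection \ref{ss:Duh exp} (cf.\ Lemma \ref{lem:Duhamel decompose}), to the assumed bound $\limsup_{n\to\infty}\|\mc{I}_{v_n}\|_{B(N^0\to\underline{S}^0)}<\infty$. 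Assembling the two contributions by a Neumann series gives $\limsup_{n\to\infty}\|\mc{I}_{v_n}\|_{B(N^s_{w_n}\to\underline{S}^s_{w_n})}<\infty$, and by the norm equivalence the lemma follows.

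The main obstacle is the bookkeeping in the weighted bilinear estimate: one must check that in every frequency regime away from the band $\{|\xi|\approx\sigma_n\}$ the gain actually available --- a full power of $\min/\max$ for the non--time--resonant pieces, but only the possibly small exponent $\theta$ of Theorem \ref{thm:besov gain} for the time--resonant high--high pieces --- strictly dominates the growth of $w_n^s$ prescribed by \eqref{eqn:w ineq key}, uniformly in $n$, and that the residual sum is made small by enlarging $\beta$. The genuinely delicate case is the time--resonant high--high interaction, where only the Besov gain is at hand; this is why $w_n$ must be taken flat on the whole band around $\sigma_n$, and why Theorem \ref{thm:besov gain}, and not merely the cruder Theorem \ref{thm:main schro est nonres}, is indispensable here.
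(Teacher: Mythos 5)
Your overall strategy is the paper's: reduce via \eqref{eqn:w incres} to a $w_n^s$-weighted estimate, split relative to the band $[\sigma_n/\beta^2,\sigma_n\beta^2]$ where $w_n$ is constant (so the $s=0$ hypothesis applies verbatim there), control the off-band interactions by the high-low/low-high gains plus \eqref{eqn:w ineq key}, and absorb the resulting $\beta^{-\delta}$-small errors by taking $\beta$ large. Your accounting of which gain beats which weight cost (near-full power needed when the Schr\"odinger frequency is far below the band, only a positive power needed in the favourable direction) is also correct.

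There is, however, one concrete step where the proposal as written does not go through: the time-resonant regime $\lambda_0\approx\lambda_1\gg\sigma_n$. You treat it as part of an ``off-resonant multiplication operator of norm $\lesssim\delta(\beta)$'' to be absorbed via \eqref{small pert IV}, claiming the gain $(\sigma_n/\lambda_1)^\theta$ from Theorem \ref{thm:besov gain}. But the product estimate \eqref{eqn:thm besov gain:high-low gain} gains only in $\min\{\lambda_0,\lambda_1\}/\max\{\lambda_0,\lambda_1\}\approx 1$ here; the gain in the \emph{wave} frequency appears only in \eqref{eqn:thm besov gain:besov gain}, i.e.\ for $\mc{I}_0[P_{\lambda_0}(v_{\lambda_2}u_{\lambda_1})]$ and only into the weaker norm $S^s_{\lambda_0}$, not $\underline{S}^s_{\lambda_0}$ (this is exactly the $S$ vs.\ $\underline{S}$ mismatch flagged in the remark after Corollary \ref{cor:summed up besov gain}). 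So the high-frequency coupling is not small as a map $\underline{S}\to N$, and a Neumann series in the composed operator $\mc{I}_0\circ(\text{off-band part})$ does not close because that operator maps the strong space to the weak one. The paper resolves this by plugging the equation in once more: it bounds $\|w^s(v_nu)_H\|_{N^0}$ by $\|V_n\|_{L^\infty_tL^2_x}\,\|w^su_{\ti H}\|_{S^0}$ (no gain needed, but only the weak norm of $u_{\ti H}$ is required), and then estimates $\|w^su_{\ti H}\|_{S^0}$ from $u_{\ti H}=\mc{I}_0[(F+v_nu)_{\ti H}]$ using \eqref{eqn:thm besov gain:besov gain}, which is where $\beta^{-\delta}$ is produced. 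Relatedly, the smallness machinery of Subsection \ref{ss:Duh exp} (\eqref{small pert IV}, Lemma \ref{lem:Duhamel decompose}) is formulated for decompositions of the \emph{potential} $V=V_1+V_2$, not for a splitting of the bilinear interaction according to the frequency of $u$; the clean way to close is the paper's direct bootstrap on $u=u_L+u_M+u_H$, where the $s=0$ bound is applied to the equation satisfied by $u_M$ and the $\beta^{-\delta}$ error bounds are absorbed. With that extra iteration inserted, the rest of your argument matches the paper's proof.
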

\begin{proof}
In the following argument, we often omit writing explicitly the dependence on $n$. Fix any $0<s<1$ and any sequence $F_n\in N^s$ with $\|F_n\|_{N^s}\le 1$. Let $u=u_n=\mc{I}_{v_n}F_n$. In view of \eqref{eqn:w incres} it suffices to prove that
        \begin{equation}\label{eqn:lem s=0 implies gen:goal}
            \| w^s u \|_{\underline{S}^0} := \Big( \sum_{\lambda \in 2^\NN} w(\lambda)^{2s} \| u_\lambda \|_{\underline{S}^0_\lambda}^2 \Big)^\frac{1}{2} \lesa \Big( \sum_{\lambda \in 2^\NN} w(\lambda)^{2s} \| F_\lambda \|_{N^0_\lambda}^2 \Big)^\frac{1}{2} =: \| w^s F \|_{N^0}
        \end{equation}
where the implied constant is independent of $n$. We decompose $u$ into the frequencies
\EQ{
 u_n = u_{<\si_n/\be^2} + u_{\si_n/\be^2<\cdot<\si_n\be^2} + u_{>\si_n\be^2}
 =: u_L + u_M + u_H. }
Suppose for the moment that there exists $\delta>0$ such that for all sufficiently large $n\in \NN$ we have
        \begin{align}
            \| w^s (v_n u_L) \|_{N^0} + \| w^s (v_n u)_L \|_{N^0} &\lesa \beta^{-\delta} \| V_n \|_{L^\infty L^2_x} \|  w^s u \|_{\underline{S}^0} \label{eqn:lem s=0 implies gen:low1} \\
                \| (v_n u)_L \|_{N^0} &\lesa \beta^{-\delta} \| V_n \|_{L^\infty_t L^2_x} \| u \|_{\underline{S}^0} \label{eqn:lem s=0 implies gen:low2} \\
            \| w^s (v u_H) \|_{N^0} + \| w^s (v_n u)_H \|_{N^0}  &\lesa  \| V_n \|_{L^\infty_t L^2_x} \| w^s F_n \|_{N^0} + \beta^{-\delta} \| V_n \|_{L^\infty_t L^2_x}^2 \| w^s u \|_{\underline{S}^0}
            \label{eqn:lem s=0 implies gen:high}
        \end{align}
where the implied constant is independent of $n$. To bound the low frequency contribution $u_L$, we note that $u_L = \mc{I}_0[ ( F_n + v_n u)_L]$ and hence an application of Lemma \ref{lem:energy ineq} together with \eqref{eqn:lem s=0 implies gen:low1} implies that
        $$ \| w^s u_L \|_{\underline{S}^0} \lesa \| w^s ( F_n + v_n u)_L \|_{N^0} \lesa \| w^s F_n \|_{N^0} + \beta^{-\delta} \| V_n \|_{L^\infty_t L^2_x} \| w^s u \|_{\underline{S}^0}. $$
Similarly, to bound the high frequency contribution, we write $u_H = \mc{I}_0[ ( F_n + v_n u)_H ]$ and again apply Lemma \ref{lem:energy ineq} together with \eqref{eqn:lem s=0 implies gen:high}
        $$ \| w^s u_H \|_{\underline{S}^0} \lesa \| w^s( F_n + v_n u)_H \|_{N^0} \lesa ( 1+ \| V_n\|_{L^\infty_t L^2_x}) \| w^s F_n \|_{N^0}  + \beta^{-\delta} \| V_n\|_{L^\infty_t L^2_x}^2 \|w^s u \|_{\underline{S}^0}.$$
Finally, to bound the remaining medium frequency contribution $u_M$, we apply the assumed uniform Strichartz bound. Let $C_S>0$ be the best uniform Strichartz constant at $s=0$, namely
\EQ{
 C_S := \sup_{n}\|\mc{I}_{v_n}\|_{B(N^0\to S^0)}. }
Since $u_M$ satisfies equation
        $$ i\p_t u_M + \De u_M - v_n u_M = (v_nu_n+F)_M-v_nu_M = F_M + v_n u_H - (v_n u)_H + (v_n u_L)_M - (v_n u_M)_L$$
applying the uniform Strichartz estimate, the bounds \eqref{eqn:lem s=0 implies gen:low1}, \eqref{eqn:lem s=0 implies gen:low2}, and \eqref{eqn:lem s=0 implies gen:high}, the bound \eqref{eqn:w ineq key} and the Fourier support assumption on $V_n$, we conclude that
    \begin{align*}
      \| w^s u_M \|_{\underline{S}^0} &= \sigma_n^s \| u_M \|_{\underline{S}^0} \\
       &\les C_S \sigma_n^s \Big( \| F_M \|_{N^0} + \| v_n u_H \|_{ N^0} + \| (v_n u)_H \|_{N^0} + \| (v_n u_L)_M \|_{N^0} + \| (v_n u_M)_L \|_{N^0} \Big)\\
        &\lesa C_S \Big( \| w^s F \|_{N^0} + \| w^s (v_n u_H) \|_{ N^0} + \|  w^s (v_n u)_H \|_{N^0} + \| w^s (v_n u_L) \|_{N^0} + \| (v_n \sigma_n^s u_M)_L \|_{N^0}\Big) \\
        &\lesa C_S  ( 1 + \| V_n \|_{L^\infty_t L^2_x}) \| w^s F_n \|_{N^0} + \beta^{-\delta} C_S  \| V_n \|_{L^\infty_t L^2_x}^2 \| w^s u \|_{\underline{S}^0}
    \end{align*}
where again the implied constant is independent of $n$. Therefore, since $\sup_n \| V_n \|_{L^\infty_t L^2_x} < \| W^2 \|_{L^2_x}$, provided we choose $\beta>1$ sufficiently large so that $\beta^{-\delta} C_S \ll 1$,  the required bound \eqref{eqn:lem s=0 implies gen:goal} follows.

It only remains to prove the inequalities \eqref{eqn:lem s=0 implies gen:low1}, \eqref{eqn:lem s=0 implies gen:low2}, and \eqref{eqn:lem s=0 implies gen:high}. Note that all the interaction terms on the left have frequency gaps between the high and low frequencies of lower ratio bound $\be$: $v_n u_L$ is high-low to high, $(v_n u)_L$ is high-high to low, while $v_n u_H$ and $(v_n u)_H$ are low-high to high. This frequency gap is exploited via the high-low gain in Theorem \ref{thm:besov gain} to give the small factor $\beta^{-\delta}$. To prove \eqref{eqn:lem s=0 implies gen:low1}, we begin by noting that the Fourier support assumption on $V_n$, together with Theorem \ref{thm:besov gain} and \eqref{eqn:w ineq key} implies that
    \begin{align*}
      \|w^s (v_n u_L) \|_{ N^0} &\lesa \sum_{\frac{\sigma_n}{4\beta} \les \lambda\les 4 \beta \sigma_n} \sum_{\mu\les \frac{\sigma_n}{\beta^2} } w^{s}(\lambda) \| P_\lambda( v_n u_\mu) \|_{N^0} \\
                                &\lesa \sum_{\frac{\sigma_n}{4\beta} \les \lambda\les 4 \beta \sigma_n} \sum_{\mu\les \frac{\sigma_n}{\beta^2} } w^{s}(\lambda) \Big( \frac{\mu}{\lambda} \Big)^{s+2\delta} \|V_n \|_{L^\infty_t L^2_x} \| u_\mu \|_{\underline{S}^0} \\
                                &\lesa \|V_n \|_{L^\infty_t L^2_x} \| w^s  u\|_{\underline{S}^0} \sum_{\frac{\sigma_n}{4\beta} \les \lambda\les 4 \beta \sigma_n} \sum_{\mu\les \frac{\sigma_n}{\beta^2} } \Big( \frac{\mu}{\lambda} \Big)^{\delta} \approx \beta^{-\delta}  \|V_n \|_{L^\infty_t L^2_x} \| w^s  u\|_{\underline{S}^0} .
    \end{align*}
Similarly,
    \begin{align*}
      \|w^s (v_n u)_L \|_{ N^0} &\lesa \sum_{\lambda \les \frac{\sigma_n}{\beta^2} } \sum_{\frac{\sigma_n}{4\beta} \les \mu \les 4 \beta \sigma_n} w^{s}(\lambda) \| P_\lambda( v_n u_\mu) \|_{N^0} \\
                                &\lesa \sum_{\lambda \les \frac{\sigma_n}{\beta^2} }\sum_{\frac{\sigma_n}{4\beta} \les \mu \les 4 \beta \sigma_n} w^{s}(\lambda) \Big( \frac{\lambda}{\mu} \Big)^{2\delta} \|V_n \|_{L^\infty_t L^2_x} \| u_\mu \|_{\underline{S}^0} \\
                                &\lesa \|V_n \|_{L^\infty_t L^2_x} \| w^s  u\|_{\underline{S}^0} \sum_{\lambda \les \frac{\sigma_n}{\beta^2} } \sum_{\frac{\sigma_n}{4\beta} \les \mu \les 4 \beta \sigma_n}  \Big( \frac{\lambda}{\mu} \Big)^{\delta} \approx \beta^{-\delta}  \|V_n \|_{L^\infty_t L^2_x} \| w^s  u\|_{\underline{S}^0} .
    \end{align*}
Thus \eqref{eqn:lem s=0 implies gen:low1} follows. The proof of \eqref{eqn:lem s=0 implies gen:low2} is identical. Finally, the proof of the remaining bound \eqref{eqn:lem s=0 implies gen:high} requires a little more work to obtain the gain of $\beta^{-\delta}$ due to the fact we only have a low-high to high gain in Theorem \ref{thm:besov gain} when bounding the Duhamel operator $\mc{I}_0$ in $S^s$. To deal with this technical issue, one option is to simply plug in the equation once more, and exploit the fact that since the potential $V_n$ has very low frequencies compared to $u_H$, the Fourier support condition is essentially preserved. To make this precise, we take
        $$ u_{\ti{H}} = u_{>\frac{1}{4} \sigma_n \beta^2}.$$
Thus $u_{\ti{H}}$ is a slight widening of the Fourier support of $u_H$. An application of Theorem \ref{thm:main schro est nonres} together with the Fourier support condition on $V_n$ and \eqref{eqn:w ineq key} gives
    \begin{align*}
              \| w^s (v u_H) \|_{N^0} + \| w^s (v_n u)_H \|_{N^0} &\lesa \Big( \sum_{\mu > \frac{1}{4} \sigma_n \beta^2}  w^{2s}(\mu) \sum_{ \frac{1}{4} \mu \les \lambda \les 4\mu} \| P_{\lambda}(v_n u_\mu) \|_{N^0}^2\Big)^\frac{1}{2} \\
              &\lesa \| V_n \|_{L^\infty_t L^2_x} \Big( \sum_{\mu > \frac{1}{4} \sigma_n \beta^2}  w^{2s}(\mu)   \| u_\mu \|_{S^0}^2\Big)^\frac{1}{2} \\
              &\approx    \| V_n \|_{L^\infty_t L^2_x} \| w^s u_{\ti{H}} \|_{S^0}.
    \end{align*}
On the other hand, since $u_{\ti{H}} = \mc{I}_0[ (F + v_n u)_{\ti{H}}]$, again using the Fourier support assumption on $V_n$ together with Theorem \ref{thm:besov gain} and Lemma \ref{lem:energy ineq} (to deal with the forcing term $F_n$), we see that provided $\beta \gg 1$
    \begin{align*}
      \| w^s u_{\ti{H}} \|_{S^0}
      &\lesa \| w^s F_n \|_{N^0} + \sum_{\frac{1}{\beta} \sigma_n \les \mu \les \sigma_n \beta}   \sum_{\lambda > \frac{1}{8} \sigma_n \beta^2} w^{s}(\lambda) \sum_{\frac{1}{4}\lambda \les \lambda_0 \les 4 \lambda} \|\mc{I}_0[ P_{\lambda_0}(v_\mu u_\lambda)] \|_{S^0_{\lambda_0}} \\
      &\lesa \| w^s F_n \|_{N^0} + \| V_n \|_{L^\infty_t L^2_x} \sum_{\frac{1}{\beta} \sigma_n \les \mu \les \sigma_n \beta}  \sum_{\lambda > \frac{1}{8} \sigma_n \beta^2} w^{s}(\lambda) \Big(\frac{\mu}{\lambda}\Big)^{\delta} \| u_\lambda \|_{\underline{S}^0} \\
      &\lesa \| w^s F_n \|_{N^0} + \beta^{-\delta}  \| V_n \|_{L^\infty_t L^2_x}  \| w^s u \|_{\underline{S}^0}.
    \end{align*}
Therefore \eqref{eqn:lem s=0 implies gen:high} follows.
\end{proof}

Thus we are left with the proof of the case $s=0$. We start by proving a decomposability lemma.

\begin{lemma}\label{lem:decom for IV}
There exists a constant $C>0$ such that for any $t_0\in \R$ and
any partition $\R = \cup_{j=1}^N I_j$ and any free wave $v = \Re( e^{\pm it|\nabla|} g) \in L^\infty_t L^2_x$ we have
        $$ \| \mc{I}_v \|_{B(N^0 \to \underline{S}^0)} \les C (1+\|v\|_{L^\I_tL^2_x})^2 \sum_{j=1}^N \| \mc{I}_v \|_{B(Y_0(I_j) \to X_0(I_j))} $$
where we define
        $$ \| \mc{I}_v \|_{B(Y_0(I_j) \to X_0(I_j))} = \sup_{ \| F \|_{Y_0(I_j)} \les 1 } \| \mc{I}_v[ \ind_{I_j} F] \|_{X_0(I_j)}. $$
\end{lemma}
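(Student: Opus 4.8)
The plan is to combine two essentially independent ingredients: a gluing of the given local‑in‑time weak estimates into a global weak estimate $\mc{I}_v\in B(Y_0(\R)\to X_0(\R))$, and an \emph{algebraic} upgrading of this weak bound, together with the free energy inequality and the basic bilinear estimate, to the strong bound $\mc{I}_v\in B(N^0\to\underline{S}^0)$ via the Duhamel identities of Subsection~\ref{ss:Duh exp}. The crucial structural input — and the reason the final constant depends only on $\sum_j\|\mc{I}_v\|_{B(Y_0(I_j)\to X_0(I_j))}$ and not on the number $N$ of subintervals — is that $v=\Re(e^{\pm it|\na|}g)$ is real‑valued, so the homogeneous propagator $\mc{U}_v(t;s)$ is a one‑parameter group of \emph{isometries} on $L^2(\R^4)$; consequently there is no geometric accumulation of constants when marching from one subinterval to the next.

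As a preliminary reduction, after refining the partition (splitting the interval containing $t_0$ at $t_0$, which enlarges $\sum_j\|\mc{I}_v\|_{B(Y_0(I_j)\to X_0(I_j))}$ only by a bounded factor) we may assume $t_0$ is an endpoint of some $I_j$, and by the time‑reversal symmetry of the equation it suffices to treat the half‑line $[t_0,\infty)=\bigcup_j I_j$ with the $I_j=[a_j,a_{j+1})$ ordered increasingly, $a_1=t_0$. For the gluing step, fix $F\in Y_0(\R)$ and set $u=\mc{I}_v[F]$. Writing $F=\sum_j\ind_{I_j}F$ and using $u(t_0)=0$, one checks from the definition of $\mc{I}_v$ that the global function $\mc{I}_v[\ind_{I_j}F]$ vanishes for $t<a_j$, coincides on $I_j$ with the local Duhamel term (whose $X_0(I_j)$‑norm, in particular its $L^\infty_tL^2_x$‑norm, is $\le\|\mc{I}_v\|_{B(Y_0(I_j)\to X_0(I_j))}\|F\|_{Y_0(I_j)}$), and for $t\ge a_{j+1}$ equals $\mc{U}_v(\cdot;a_{j+1})\rho_j$ with $\|\rho_j\|_{L^2}\le\|\mc{I}_v\|_{B(Y_0(I_j)\to X_0(I_j))}\|F\|_{Y_0(I_j)}$. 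Since $\mc{U}_v$ is an $L^2$‑isometry, summing in $j$ and using $\|\ind_{I_j}F\|_{Y_0(I_j)}\le\|F\|_{Y_0(\R)}$ controls $\|u\|_{L^\infty_tL^2_x}$ by $\sum_j\|\mc{I}_v\|_{B(Y_0(I_j)\to X_0(I_j))}\|F\|_{Y_0}$ with no dependence on $N$; the $L^2_tL^{2^*}_x$‑norm is estimated in the same way, the homogeneous tails being treated as a single global evolution, together with the identity $\mc{U}_v(t;t_0)f=e^{i(t-t_0)\De}f+\mc{I}_v[v\,e^{i(\cdot-t_0)\De}f]$ and the Hölder bound $\|vh\|_{L^2_tL^{2_*}_x}\lesa\|v\|_{L^\infty_tL^2_x}\|h\|_{L^2_tL^{2^*}_x}$ (valid since $d=4$, $2^*=4$). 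This yields $\|\mc{I}_v\|_{B(Y_0(\R)\to X_0(\R))}\lesa(1+\|v\|_{L^\infty_tL^2_x})\sum_j\|\mc{I}_v\|_{B(Y_0(I_j)\to X_0(I_j))}$.

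For the upgrading step I would apply the automatic‑upgrading mechanism \eqref{auto up}. One has $\mc{I}_0\in B(N^0\to\underline{S}^0)$ by the energy inequality \eqref{eqn:lem energy ineq:energy}, and $v\in B(S^0\to N^0)$ as well as $v\in B(\underline{S}^0\to N^0)$ by Theorem~\ref{thm:main schro est nonres} with $\epsilon=0$. Feeding the weak global bound of the previous paragraph into $\mc{I}_v=\mc{I}_0+\mc{I}_0 v\mc{I}_v=\mc{I}_0(1+v\mc{I}_v)$ — first to produce $\mc{I}_v\in B(N^0\to S^0)$ (after splitting the forcing into the pieces near and far from the paraboloid, the far piece being absorbed directly by $\mc{I}_0$ exactly as in the proof of Lemma~\ref{lem:energy ineq}, the near piece being handled through the weak $X_0$‑bound) and then to promote $S^0$ to $\underline{S}^0$ — gives the claim; bookkeeping the two invocations of \eqref{exp IV by I0} produces the factor $(1+\|v\|_{L^\infty_tL^2_x})^2$, and the dependence on $\sum_j\|\mc{I}_v\|_{B(Y_0(I_j)\to X_0(I_j))}$ stays linear once one notes that at least one $I_j$ is unbounded, so this sum is $\gtrsim 1$.

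The step I expect to be the main obstacle is precisely the junction between the two halves: the hypothesis is phrased in the plain Strichartz spaces $Y_0,X_0$, whereas the conclusion lives in the Besov‑type spaces $N^0,\underline{S}^0$ on which the nonlinear term $vu$ has to be controlled, and a naive Duhamel iteration would demand $\|v\|_{L^\infty_tL^2_x}$ small. Both difficulties are resolved purely algebraically — the identities \eqref{exp IV by IV'}–\eqref{exp IV by I0} furnish the required ``$+1$'' terms with no smallness assumption, and the $L^2$‑isometry of $\mc{U}_v$ must be used at \emph{every} gluing step so that $N$ never enters the constant — but making this precise without loss is the delicate part of the argument.
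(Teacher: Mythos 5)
Your overall architecture matches the paper's: reduce, via the automatic upgrading identity \eqref{exp IV by I0}/\eqref{auto up} together with $\mc{I}_0\in B(N^0\to\underline{S}^0)$ and $v\in B(X_0\to Y_0)$, to the weak bound $\|\mc{I}_v\|_{B(Y_0\to X_0)}\lesa\sum_j C_j$ with $C_j:=\|\mc{I}_v\|_{B(Y_0(I_j)\to X_0(I_j))}$, and then glue the local bounds using the decomposition $F=\sum_j\ind_{I_j}F$ and the $L^2$-unitarity of $\mc{U}_v$. The $L^\infty_tL^2_x$ part of your gluing is fine. The gap is in the $L^2_tL^{2^*}_x$ part: you must bound $\|\mc{U}_v(\cdot;a_{j+1})\rho_j\|_{L^2_tL^{2^*}_x(I_k)}$ on every later interval $I_k$, i.e.\ you need a \emph{homogeneous} local Strichartz bound, whereas the hypothesis only provides \emph{inhomogeneous} bounds $C_k$. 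Your proposed fix via the identity $\mc{U}_v(t;a_{j+1})\rho=e^{i(t-a_{j+1})\De}\rho+\mc{I}_v[v\,e^{i(\cdot-a_{j+1})\De}\rho]$ is circular if $\mc{I}_v$ is taken globally (that is exactly the operator norm being estimated), and if instead you localise it to each $I_k$ you get $\|\mc{U}_v(\cdot;a_k)\phi\|_{X_0(I_k)}\lesa(1+C_k\|v\|_{L^\infty_tL^2_x})\|\phi\|_{L^2}$, whose squares sum to $\gtrsim N$ because of the free-evolution contribution $1$ on each interval; either way the constant is not $N$-independent, contrary to what you assert.

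The missing idea is the $TT^*$/duality step that the paper performs on each $I_j$: setting $(T\fy)(t)=\mc{U}_v(t;0)\fy$ on $I_j$, one computes $\|T^*f\|_{L^2}^2=\LR{\cI_v(\ind_{I_j}f)|\ind_{I_j}f}+\LR{\ind_{I_j}f|\cI_v(\ind_{I_j}f)}$, whence $\|\mc{U}_v(\cdot;0)\fy\|_{L^2_tL^{2^*}_x(I_j)}\les(2C_j)^{1/2}\|\fy\|_{L^2}$ and, dually, $\|\int_{I_j}\mc{U}_v(0;s)f(s)\,ds\|_{L^2}\les(2C_j)^{1/2}\|f\|_{Y_0(I_j)}$. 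The crucial quantitative point is that the homogeneous constants are $(2C_k)^{1/2}$, so their \emph{squares} are $\ell^1_k$-summable; square-summing the tails of $u_j$ over $k$ then yields $\|u_j\|_{L^2_tL^{2^*}_x}\lesa\|C\|_{\ell^1}^{1/2}C_j^{1/2}\|F\|_{Y_0(I_j)}$, and Cauchy--Schwarz in $j$ (using the $L^2_t$ structure of $Y_0$) gives $3\|C\|_{\ell^1}\|F\|_{Y_0}$ with no dependence on $N$. You correctly flagged that "making this precise without loss is the delicate part," but the mechanism that makes it work is absent from your argument, so as written the proof does not close.
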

\begin{proof}
By the automatic upgrading \eqref{auto up}, together with the
boundedness $Y_0\subset N^0$, $\underline{S}^0\subset X_0$,
$\mc{I}_0:N^0\to\underline{S}^0$ and the boundedness of the
multiplication operator $v:X_0\to Y_0$, it suffices to prove
\EQ{
 \|\mc{I}_v\|_{B(Y_0\to X_0)} \les 3 \|C_j\|_{\ell^1}}
where we define  $C_j := \| \mc{I}_v \|_{B(Y_0(I_j) \to X_0(I_j))}$. By translation invariance, we may assume that $t_0=0$ and label the partition as $I_j=[T_{j-1},T_j)$. The bound $ \| \mc{I}_v \|_{B(Y_0(J) \to X_0(J))}\les \| \mc{I}_v \|_{B(Y_0(I) \to X_0(I))}$ for any intervals $J\subset I$ implies that we may freely add additional points to the partition, in particular, we may assume that $I_{N_0} = [0, T_{N_0})$ for some $1<N_0\les N$ (i.e. $t_0=0$ belongs to the partition). Before proceeding further, let us recall the $TT^*$ argument on the interval $I_j$. Let $(T\fy)(t):=\mc{U}_v(t;0)\fy$ on $t\in I_j$. Then using the unitary property of $\mc{U}_v$ we have (here we use the standard convention that $\int_b^a = - \int_a^b$ if $a< b$)
$$  T^*f=\int_{I_j} \mc{U}_v (0;s)f(s)ds, \qquad   TT^*f = \int_{I_j} \mc{U}_v(t;s)f(s)ds = \int_0^{T_j} \mc{U}_v(t;s) \ind_{I_j}(s) f(s) ds,$$
and
    \begin{align*}
        \|T^*f\|_{L^2_x}^2 &= \int_{I_j} \int_0^t \LR{\mc{U}_v(t;s) (\ind_{I_j} f)(s)|f(t)} dsdt + \int_{I_j} \int_0^s \LR{f(s)| \mc{U}_v(s;t)(\ind_{I_j} f)(t)} dtds  \\
                    &= \LR{\cI_{v} (\ind_{I_j} f) |\ind_{I_j} f}_{t,x} + \LR{ \ind_{I_j} f|\cI_{v}(\ind_{I_j} f)}_{t,x},
    \end{align*}
and consequently for any $j=1, \dots, N$
$$ \|T\|_{B(L^2\to Y_0'(I_j))}^2 = \|T^*\|_{B(Y_0(I_j)\to L^2)}^2 \le 2 \|\cI_{v}\|_{B(Y_0(I_j)\to Y_0'(I_j))} = 2 C_j, $$
where $Y_0'=L^2_t L^{2^*}_x$ is the dual space.
Therefore, unpacking the definition of the operator $T$, we see that
\EQ{ \label{TT*}
    \Big\| \int_{I_j} \mc{U}_v(0; s) f(s) ds \Big\|_{L^2_x} \les (2C_j)^\frac{1}{2} \| f \|_{Y_0(I_j)}, \qquad \| \mc{U}_v(t;0) \varphi \|_{Y_0'(I_j)} \les (2C_j)^\frac{1}{2} \| \varphi \|_{L^2_x}. }
Take any $f\in Y_0$ and let $u$ be the solution of
    $$(i\p_t+\De-v)u=f, \pq u(t_0)=0$$
and for $j=1, \dots, N$ let $u_j$ be the solution to
    $$(i\p_t+\De-v )u_j = \ind_{I_j} f, \pq u_j(t_0)=0.$$
Clearly we can write $u=\sum_{j=1}^N u_j$ and $u_j(t) = 0$ if either  $T_{j-1} \g 0$ and $t\les T_{j-1}$, or $T_j \les 0$ and $t \g T_j$. On the other hand, if $t\g T_j > 0$ or $t\les T_{j-1} < 0$ then we have the identity
        $$ u_j(t) = \mc{U}_v(t;0) \int_{T_{j-1}}^{T_j} \mc{U}_v(0; s) f(s) ds. $$
Therefore,  the fact that $\mc{U}_v$ is a unitary operator, together with an application of \eqref{TT*} gives the bounds
        $$ \| u_j\|_{L^\infty_t L^2_x} = \| u_j \|_{L^\infty_tL^2_x(I_j\times \RR^4)} \les C_j \| f \|_{Y_0(I_j)}$$
and
    $$  \|u_j\|_{Y_0'(I_k)} \le \CAS{   2 C_j^\frac{1}{2} C_k^\frac{1}{2} \| f \|_{Y_0(I_j)} &(k>j\g N_0 \text{ or } k<j< N_0), \\
                                         C_j\|f\|_{Y_0(I_j)} &(k=j),\\
                                         0 &\text{otherwise.}}$$
Hence, using the $L^2_t$ structure in $Y_0$ and $Y_0'$, we obtain
    \begin{align*}
       \| u_j \|_{X_0} \les  C_j \| f \|_{Y_0(I_j)} + 2 \Big( \sum_k C_j C_k \| f \|_{Y_0(I_j)}^2 \Big)^\frac{1}{2} \les  3 \| C_k \|_{\ell^1}^\frac{1}{2} C_j^\frac{1}{2} \| f \|_{Y_0(I_j)}
    \end{align*}
and thus summing up over $j$ we conclude that
    $$ \| u \|_{X_0} \les \sum_j \| u_j \|_{X_0} \les 3 \| C_j \|_{\ell^1} \Big( \sum_j \| f \|_{Y_0(I_j)}^2 \Big)^\frac{1}{2} = 3 \| C_j \|_{\ell^1} \| f\|_{Y_0}. $$
\end{proof}

\begin{proof}[Proof of \eqref{unifS singP} for $s=0$]

Fix the profile $V = e^{it|\nabla|} g$ with $\| g \|_{L^2_x} < \| W^2 \|_{L^2_x}$, and let $v=\Re(V)$. The argument proceeds in the following steps:
    \begin{enumerate}
       \item Choose a nice partition $ \RR = \cup_{j=0}^N I_j$ depending only on $V$.

       \item Let $\sigma_n \to \infty$ be a sequence of positive real numbers, and define $v_n = \sigma_n^2 v(\sigma_n t , \sigma_n x)$ and  $I^n_j = \{ \sigma_n t \in I_j \}$. Show that the choice of partition in the first step implies that
                    \begin{equation}\label{eqn:unif stri conclusion:loc bnds}
                         \limsup_{n\to \infty} \| \mc{I}_{v_n} \|_{B( Y_0(I^n_j) \to X_0(I^n_j))} < \infty.
                    \end{equation}

       \item Apply Lemma \ref{lem:decom for IV} to conclude that
                    $$ \limsup_{n\to \infty} \| \mc{I}_{v_n} \|_{B(N^0 \to \underline{S}^0)} < \infty. $$
    \end{enumerate}

\textbf{Step (i): Construction of the partition $I_j$.} Let $\epsilon>0$ be a small constant, eventually it will be taken to be smaller than various universal constants, and in particular, $\epsilon$ will be independent of $v$ (and $n$). We first choose $I_0 = (-\infty, -T)$ and $I_N = (T, \infty)$ where $T> \epsilon^{-1}$ satisfies
                        $$ \| v \|_{L^\infty_t \dot{B}^{-1}_{4, \infty}(\{2|t|>T\}\times \RR^4)} \les \epsilon.$$
This is always possible by breaking $v$ into a small part, and a $C^\infty_0$ part, and using the dispersive decay of the free wave. The assumption that we are below the ground state implies that for every $t^* \in [-T, T]$ we have by \cite{Mizutani2016}
            $$ \| \mc{I}_{v(t^*)} \|_{B(Y_0 \to X_0)} < \infty. $$
On the other hand, by \eqref{small pert IV} and the $L^2$ continuity of the free wave $v$, for every $t^* \in [-T, T]$ there exists $\delta>0$ such that
            $$ \sup_{|s^* - t^*|<\delta} \| \mc{I}_{v(s^*)} \|_{B(Y_0 \to X_0)} < \infty. $$
By compactness, we then conclude that
            $$ C_* = \sup_{ t^* \in [-T, T] } \| \mc{I}_{v(t^*)} \|_{B(Y_0 \to X_0)}  < \infty. $$
We now choose the (bounded) intervals $I_j = [t_j, t_{j+1} )$ for $j=1, \dots, N-1$ such that $-T = t_1 < \dots < t_{N} = T$ and
            $$  \sup_{t\in I_j} \| v(t) - v(t_j) \|_{L^2_x} \les \epsilon C_*^{-1}. $$

\textbf{Step (ii): The localised bounds. } There are two distinct cases to consider, the unbounded intervals $I_0$ and $I_N$, in which the operator $\mc{I}_{v}$ is a perturbation of the free case $\mc{I}_0$, and the bounded intervals $I_j$ for $j=1, \dots, N-1$ where $\mc{I}_v$ is a small perturbation of the stationary case $\mc{I}_{v(t_j)}$. We start with the more involved case of the unbounded intervals $I_0$ and $I_N$. Choose a smooth cutoffs $\rho, \chi \in C^\infty$ such that\footnote{Let $\eta_j \in C^\infty$ with $\eta_1(t) = 1$ on $|t|<1$ and $\eta_1(t) =0$ for $|t|>2$, and $\widehat{\eta}_2(0) = 1$, $\supp \widehat{\eta}_2 \subset \{ |\tau| \les 1\}$. Define $\rho(t) =  1 - \eta_1(2t/T)$ and $\chi(t) = \eta_2 * \rho(t)$. }
    $$ \rho = \begin{cases}
       1 &\text{ if }|t|>T \\
       0   & \text{ if }     |t|< T/2,
    \end{cases}, \qquad \supp \widehat{\chi} \subset \{|\tau| \les 1\}, \qquad \| \rho - \chi \|_{L^\infty_t(\RR)} \lesa T^{-1} \lesa \epsilon. $$
Expanding the Duhamel formula twice, gives the identity
    \begin{equation}\label{eqn:unif stri conclusion:ident for u}
        u:= \mc{I}_{ (\rho v)_n } F = \mc{I}_0 F + \mc{I}_0 [ (\rho v)_n u ] = \mc{I}_0 F + \mc{I}_0 [ (\rho v)_n \mc{I}_0 F] + \mc{I}_0[(\rho v)_n \mc{I}_0( (\rho v)_n u )]
    \end{equation}
where $(\rho v)_n(t,x) = \rho( \sigma_n t)  \sigma_n^2 v(\sigma_n t, \sigma_n x)$. An application of Theorem \ref{thm:besov gain} together with H\"older's inequality and the usual square function estimate gives
    \begin{align*}
     \| \mc{I}_0[ (\chi v)_n \mc{I}_0( (\rho v)_n u )] \|_{X_0} \lesa \| \mc{I}_0[ (\chi v)_n \mc{I}_0( (\rho v)_n u )] \|_{S^0}  &\lesa \| (\chi v)_n \|_{L^\infty_t \dot{B}^{-1}_{4,\infty}}^\theta \| \chi \|_{L^\infty_t}^{1-\theta} \| v_n \|_{L^\infty_t L^2_x}^{1-\theta} \| \mc{I}_0[ (\rho v)_n u ] \|_{\underline{S}^0} \\
     &\lesa \| (\chi v)_n \|_{L^\infty_t \dot{B}^{-1}_{4,\infty}}^\theta \| \chi \|_{L^\infty_t}^{1-\theta} \| v_n \|_{L^\infty_t L^2_x}^{1-\theta} \| (\rho v)_n u  \|_{Y_0} \\
     &\lesa \| \chi v \|_{L^\infty_t \dot{B}^{-1}_{4,\infty}}^\theta \| \chi \|_{L^\infty_t}^{1-\theta} \| v \|_{L^\infty_t L^2_x}^{2-\theta} \| \rho \|_{L^\infty_t L^2_x} \| u \|_{X_0}
    \end{align*}
where the last line followed by rescaling. On the other hand simply applying H\"older's inequality and the free Strichartz estimate for $\mc{I}_0$ implies that
    \begin{align*}
      \| \mc{I}_0[ ((\rho-\chi) v)_n \mc{I}_0( (\rho v)_n u )] \|_{X_0} &\lesa \| ( ( \rho - \chi) v )_n \|_{L^\infty_t L^2_x} \| \mc{I}_0( (\rho v)_n u ) \|_{X_0} \\
                &\lesa \| \rho - \chi \|_{L^\infty_t } \| \rho \|_{L^\infty_t} \| v \|_{L^\infty_t L^2_x}^2 \| u \|_{X_0}.
    \end{align*}
In view of our assumptions on $T$, $v$, and the cutoffs $\chi$ and $\rho$, we conclude that
    $$ \| \chi v \|_{L^\infty_t \dot{B}^{-1}_{4,\infty}} \lesa \| \rho v \|_{L^\infty_t \dot{B}^{-1}_{4,\infty}} + \| (\chi - \rho) v \|_{L^\infty_t L^2_x}  \lesa \epsilon$$
and thus combining the above bounds we obtain
   \begin{align*}
        \| \mc{I}_0[ (\rho v)_n \mc{I}_0( (\rho v)_n u )] \|_{X_0}
            &\lesa  \| \mc{I}_0[ ((\rho-\chi) v)_n \mc{I}_0( (\rho v)_n u )] \|_{X_0} +  \| \mc{I}_0[ (\chi v)_n \mc{I}_0( (\rho v)_n u )] \|_{X_0} \\
            &\lesa  \epsilon^\theta  \| u \|_{X_0}
   \end{align*}
with the implied constant depending only on $\|v\|_{L^\I_tL^2_x}$, and in particular is independent of $v$, $n$, and the choice of cutoffs $\rho$ and $\eta$. The identity \eqref{eqn:unif stri conclusion:ident for u} then gives
    \begin{align*}
       \| u \|_{X_0} &\lesa \| F \|_{Y_0} + \| (\rho v)_n \mc{I}_0 F \|_{Y_0} + \| \mc{I}_0[ (\rho v)_n \mc{I}_0 ( (\rho v)_n u )] \|_{X_0} \\
                     &\lesa \| F \|_{Y_0} + \| \rho \|_{L^\infty} \| v \|_{L^\infty_tL^2_x} \| F \|_{Y_0} + \epsilon^\theta \| u \|_{X_0} \\
                     &\lesa \| F \|_{Y_0} + \epsilon^\theta \| u \|_{X_0}
    \end{align*}
and therefore, provided that $\epsilon>0$ is sufficiently small (this choice is independent $T$, $n$, the cutoffs $\rho$ and $\eta$, and $v$), we conclude that
        $$ \limsup_{n\to \infty} \| \mc{I}_{(\rho v)_n} F \|_{X_0} \lesa \| F \|_{Y_0}.$$
Since
        $$ \| \mc{I}_{v_n} \|_{B( Y_0(I_0^n) \to X_0(I^n_0))} + \| \mc{I}_{v_n} \|_{B( Y_0(I_N^n) \to X_0(I_N^n))} \les 2 \| \mc{I}_{(\rho v)_n} \|_{B(Y_0 \to X_0)}$$
we finally see that
        $$ \limsup_{n\to \infty}   \| \mc{I}_{v_n} \|_{B( Y_0(I_0^n) \to X_0(I^n_0))} +   \limsup_{n\to \infty} \| \mc{I}_{v_n} \|_{B( Y_0(I_N^n) \to X_0(I_N^n))} < \infty. $$
It remains to deal with the bounded intervals $I^n_j  = [\sigma_n^{-1} t_j, \sigma_n^{-1} t_{j+1})$ with $j=1, \dots, N-1$. But this a direct application of a (localised version) of the small perturbation observation \eqref{small pert IV} after noting that rescaling gives
        $$ \| \mc{I}_{v_n(\sigma_n^{-1} t_j)} \|_{B(Y_0 \to X_0)} = \| \mc{I}_{v(t_j)} \|_{B(Y_0 \to X_0)} \les \sup_{t\in [-T, T]} \| \mc{I}_{v(t)} \|_{B(Y_0 \to X_0)} = C_*$$
and hence
        \begin{align*}
            \| \mc{I}_{v_n(\sigma_n^{-1} t_j)} ( v_n- v_n(\sigma_n^{-1} t_j)) \|_{B( X(I_j^n) \to X(I_j^n))} &\les \| \mc{I}_{v_n(\sigma_n^{-1} t_j)} \|_{B(Y_0 \to X_0)} \| v_n - v_n(\sigma_n^{-1} t_j) \|_{L^\infty_t L^2_x(I^n_j \times \RR^4)} \\
            &\les C_* \| v - v(t_j) \|_{L^\infty_t L^2_x(I_j \times \RR^4)} \les \epsilon
        \end{align*}
where the last line used the choice of the intervals $I_j$. Therefore
    $$ \limsup_{n\to \infty} \| \mc{I}_{v_n} \|_{B(Y_0(I^n_j) \to X_0(I^n_j))} \les ( 1- \epsilon)^{-1} \limsup_{n\to \infty} \| \mc{I}_{v_n(\sigma_n^{-1} t_j)} \|_{B(Y_0 \to X_0)} \les \frac{C_*}{1-\epsilon} < \infty. $$
This completes the proof of the localised bounds \eqref{eqn:unif stri conclusion:loc bnds}.

\textbf{Step (iii): Conclusion of proof.} In view of the localised bounds \eqref{eqn:unif stri conclusion:loc bnds}, and the fact that the number of intervals in the partition $I_j$ is independent of $n$, an application of Lemma \ref{lem:decom for IV} implies \eqref{unifS singP}.
\end{proof}

Thus we have proven (A1)--(A3), thereby completing the proof of Theorem \ref{thm:unif Stz}.
It is worth noting that (A1)--(A3) are independent of the contradiction argument in \S \ref{ss:proof of unif stri}, so they hold true in general.

\section{Local and global well-posedness}\label{sec:gwp-proof}

In this section we give the proof of Theorems \ref{thm:gwp below W} and \ref{thm:lwp below W}. The key point is the refined local well-posedness result which shows that the time of existence is independent of the initial profile, and thus only depends on the size of the norm.

\subsection{Local well-posedness}\label{subsec:lwp} A short computation using Sobolev embedding and the Strichartz estimate gives for any $s>\frac{1}{2}$
        $$ \| e^{it\Delta} f \|_{L^2_t W^{\frac{1}{2}, 4}_x([0, T]\times \RR^4)} \lesa T^{\frac12\min\{1,( s- \frac{1}{2})\}} \| f \|_{H^s}. $$
Consequently Theorem \ref{thm:lwp below W} follows from the following slightly sharper local well-posedness result.

\begin{theorem}\label{thm:lwp sharp}
Let $d=4$ and take $(s, \ell)$ satisfying \eqref{eqn:cond on s l}. Let $0<B< \| W^2\|_{L^2(\RR^4)}$. Then there exists $\epsilon>0$ such that for any $T>0$ and data $(f,g) \in H^s \times H^\ell$ satisfying
            \begin{equation}\label{eqn:thm lwp sharp:cond on data}
                \| e^{it\Delta} f \|_{L^2_t W^{\frac{1}{2}, 4}_x([0, T]\times \RR^4)} \| f \|_{H^\frac{1}{2}(\RR^4)}^7 \les \epsilon, \qquad  \| g \|_{L^2(\RR^4)} \les B
            \end{equation}
there exists a unique solution $(u, V) \in C([0, T]; H^s(\RR^4)\times H^\ell(\RR^4))$ to \eqref{eqn:Zakharov first order} with $u\in L^2_tW^{\frac{1}{2}, 4}_x([0, T]\times \RR^4)$ and $(u, V)(0)=(f,g)$. Moreover the data to solution map is locally Lipschitz continuous.
\end{theorem}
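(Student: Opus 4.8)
The plan is to run a contraction mapping argument in the space $\underline{S}^s$ for the Schrödinger component and a wave Strichartz/energy space for $V$, with the time interval $[0,T]$ fixed and the smallness coming entirely from the first hypothesis in \eqref{eqn:thm lwp sharp:cond on data} rather than from shrinking $T$. First I would decompose $V = e^{it|\nabla|}g + V_{\mathrm{nl}}$, where $V_{\mathrm{nl}}$ solves the inhomogeneous wave equation with forcing $-|\nabla||u|^2$; the free part $e^{it|\nabla|}g$ has $L^\infty_t L^2_x$ norm $\les B < \|W^2\|_{L^2}$, so Theorem \ref{thm:unif Stz} applies to it \emph{uniformly}. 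The key structural move is to treat the linear Schrödinger-with-potential problem $(i\p_t + \Delta - \Re(e^{it|\nabla|}g))u = F$ via the uniform Strichartz estimate \eqref{unif Stz}, and put everything else — the self-interaction $\Re(V_{\mathrm{nl}})u$ and the wave nonlinearity $|\nabla||u|^2$ — on the right-hand side as a perturbation. Thus I would set up the iteration map $\Phi(u,V_{\mathrm{nl}})$ defined by solving
\[
 (i\p_t + \Delta - \Re(e^{it|\nabla|}g))u' = \Re(V_{\mathrm{nl}})u, \qquad u'(0) = f,
\]
together with the Duhamel formula for $V_{\mathrm{nl}}'$ driven by $|\nabla||u|^2$, and show $\Phi$ is a contraction on a suitable ball.

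The estimates needed are: (a) for the Schrödinger equation, $\|u'\|_{\underline{S}^s} \les C\big(\|f\|_{H^s} + \|\Re(V_{\mathrm{nl}})u\|_{N^s}\big)$ from Theorem \ref{thm:unif Stz}, where $C$ depends only on $B$; (b) a bilinear bound $\|\Re(V_{\mathrm{nl}})u\|_{N^s} \les \|V_{\mathrm{nl}}\|_{(\text{wave Strichartz})}\|u\|_{S^{s}}$, which follows from Theorem \ref{thm:main schro est nonres} (the term $\|(i\p_t \pm |\nabla|)V_{\mathrm{nl}}\|_{L^2_t H^{d/2-3-\epsilon}_x}$ is exactly $\||\nabla||u|^2\|$ in a negative Sobolev space, estimated by Lemma \ref{lem:elem prod est} or the standard $\dot H^1$ product estimate in $d=4$); (c) for the wave equation, the usual Strichartz / energy estimate giving $\|V_{\mathrm{nl}}\| \les \||\nabla||u|^2\| \les \|u\|_{L^2_t W^{1/2,4}_x}\|u\|_{L^\infty_t H^{1/2}_x} \lesa \|u\|_{\underline S^s}^2$ after interpolation and the embedding $\underline S^s \hookrightarrow L^2_t W^{1/2,4}_x$ (here $d=4$, $2^* = 4$). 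The role of the peculiar weight $\|f\|_{H^{1/2}}^7$ and the quantity $\|e^{it\Delta}f\|_{L^2_tW^{1/2,4}_x}$ is that the genuinely \emph{small} quantity controlling the whole nonlinear loop is a product of the form $\|e^{it\Delta}f\|_{L^2_tW^{1/2,4}_x} \cdot (\text{powers of }\|f\|_{H^{1/2}}, B)$; I would isolate the homogeneous Strichartz norm of the Schrödinger data, use it to absorb the quadratic-in-$u$ terms, and track the polynomial dependence on $\|f\|_{H^{1/2}}$ and $B$ through the chain of bilinear estimates (the exponent $7$ matching the number of factors of $\|f\|_{H^{1/2}}$ that accumulate, consistent with the $\theta = 6/7$ appearing in the proof of Theorem \ref{thm:besov gain}).

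With these three estimates in hand the contraction is routine: choose the ball radius $R \approx \|f\|_{H^s}$ (plus a fixed multiple of $B$ for the wave part), and the smallness of $\|e^{it\Delta}f\|_{L^2_tW^{1/2,4}_x}\|f\|_{H^{1/2}}^7$ guarantees that $\Phi$ maps the ball into itself and is a strict contraction; Lipschitz dependence of the solution map follows by differencing the fixed-point equations and running the same estimates on $u_1 - u_2$. Propagation to the full range of $(s,\ell)$ in \eqref{eqn:cond on s l} (i.e.\ persistence of higher regularity) is then standard once the core $s = 1/2$-type estimate is established, by re-running the contraction in the higher-regularity norm using the $S^{1/2}$-bound already obtained to close the loop linearly. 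The main obstacle I anticipate is point (b): getting the $N^s$-bound on $\Re(V_{\mathrm{nl}})u$ where $V_{\mathrm{nl}}$ is an \emph{inhomogeneous} wave with forcing only in a negative-order space — this is precisely why the paper needs the bilinear Fourier restriction estimates of Section \ref{sec:fre} (Theorems \ref{thm:bilinear wave-schrodinger} and \ref{thm:bilinear schrodinger}) rather than just Strichartz, to capture the transversality in the resonant wave-Schrödinger interaction and avoid an $\ell^1$ summation over frequencies of $V_{\mathrm{nl}}$; everything else is bookkeeping of exponents.
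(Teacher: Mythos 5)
Your overall architecture matches the paper's: reduce to $(s,\ell)=(\tfrac12,0)$ by persistence of regularity, split $V=e^{it|\nabla|}g+V_{\mathrm{nl}}$, treat the free-wave potential via the uniform Strichartz estimate of Theorem \ref{thm:unif Stz}, and contract on $[0,T]$ with smallness coming from the data hypothesis rather than from $T$. However, there is a concrete gap at the point where the contraction must close: you never show that the endpoint Strichartz norm $\|u\|_{D(I)}=\|u\|_{L^2_tW^{\frac12,4}_x(I)}$ of the \emph{iterates} is small. The hypothesis only makes $\|e^{it\Delta}f\|_{D(I)}$ small; the linear part of every iterate is $\mc{U}_{\Re(V_0)}f$, the evolution with the large free-wave potential, and Theorem \ref{thm:unif Stz} only gives $\|\mc{U}_{\Re(V_0)}f\|_{\underline{S}^{1/2}}\lesssim_B\|f\|_{H^{1/2}}$, which is not small. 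Combining your (b) and (c), the cubic term is bounded by roughly $\|u\|_{D}\|u\|_{S^{1/2}}^2$, so on a ball of radius $\approx\|f\|_{H^{1/2}}$ the map is a contraction only if $\|u\|_{D}\ll\|f\|_{H^{1/2}}^{-1}$ along the iteration --- exactly the missing smallness. The tool you invoke for the Schr\"odinger nonlinearity, Theorem \ref{thm:main schro est nonres}, cannot supply it: it bounds $\|vu\|_{N^s}$ by the full norm $\|u\|_{S^{s+\epsilon}}$ with no factor of the $D$-norm of $u$, so applied to $\Re(V_0)e^{it\Delta}f$ it only returns $\|g\|_{L^2}\|f\|_{H^{1/2}}$, and no smallness survives.

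The paper fixes precisely this point with the refined bilinear estimates from \cite{Candy19} (Propositions 5.3 and 5.4), which have a geometric-mean structure, $\|v\psi\|_{N^{1/2}}\lesssim\|v\|_{W}\big(\|\psi\|_{S^{1/2}}\|\psi\|_{D}\big)^{1/2}$ and its analogue for the wave Duhamel term. Expanding $\mc{U}_{\Re(V_0)}f=e^{it\Delta}f+\mc{I}_{\Re(V_0)}[\Re(V_0)e^{it\Delta}f]$ and using this bound yields \eqref{eqn:thm lwp sharp:D energy}, i.e. $\|\psi\|_{D(I)}\lesssim_B\big(\|f\|_{H^{1/2}}\|e^{it\Delta}f\|_{D(I)}\big)^{1/2}+\|F\|_{N^{1/2}(I)}$, which is how the smallness of $\|e^{it\Delta}f\|_{D}\,\|f\|_{H^{1/2}}^{7}$ is transferred to the solution's $D$-norm; the exponent $7$ arises from tracking these half-powers through the cubic iteration, not from the $\theta=6/7$ in the proof of Theorem \ref{thm:besov gain}, as you speculate. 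Relatedly, your final diagnosis is misplaced: the bilinear restriction estimates of Section \ref{sec:fre} are not what controls $\Re(V_{\mathrm{nl}})u$ --- they enter only through the proof of the uniform Strichartz estimate --- while the nonlinear estimates in the local theory come from \cite{Candy19}. So your outline has the right shape, but without an estimate of the geometric-mean type (or some substitute transferring the smallness of the free flow in $D$ to $\mc{U}_{\Re(V_0)}f$ and to the nonlinear terms), the scheme as written does not close with $T$ independent of the profile.
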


Note that this is a large data result, since given \emph{any} $f\in
H^s$ with $s \g \frac{1}{2}$  there exists a $T>0$ such that $\|
e^{it\Delta} f \|_{L^2_t W^{\frac{1}{2}, 4}_x} < \epsilon$. The key
advantage of Theorem \ref{thm:lwp sharp} over the results in
\cite{Candy2023} is that we can take the time of existence $T$ to be
\emph{independent} of $V$.

\begin{proof}[Proof of Theorem \ref{thm:lwp sharp}]
By the persistence of regularity obtained in \cite[Theorem 8.1]{Candy2023}, it suffices to consider the case $(s, \ell) = (\frac{1}{2}, 0)$. Fix data $(f, g) \in H^\frac{1}{2}\times H^\ell$ and an interval $I=[0, T]$ satisfying \eqref{eqn:thm lwp sharp:cond on data} and define
    $$ V_0 = e^{it|\nabla|} g, \qquad \mc{J}_0[G] =  - i \int_0^t e^{i(t-s)|\nabla|} G(s) ds. $$
For ease of notation, we take
    $$ \| V \|_W = \| V \|_{L^\infty_t L^2_x} + \| (i\p_t + |\nabla|) V \|_{L^2_t H^{-1}_x}, \qquad \|u \|_D = \| u \|_{L^2_t W^{\frac{1}{2}, 4}_x}. $$
The norm $\| \cdot \|_W$ is used to control the wave evolution, while $\| \cdot \|_D$ is simply the endpoint Strichartz space. An application of \cite[Proposition 6.1 and Proposition 6.2]{Candy2023} gives the bounds\footnote{Note that the $S^\frac{1}{2} = S^{\frac{1}{2}, 0, 0}$, $N^{s} = N^{s, 0, 0}$, and $W = W^{0, 0, 0}$ where $S^{\frac{1}{2}, 0, 0}$, $N^{s, 0, 0}$, and $W^{0, 0, 0}$ are the spaces in \cite{Candy2023}.}
    \begin{align}
      \| v  \psi  \|_{N^\frac{1}{2}(I)} &\lesa \| v \|_{W(I)} \Big( \| \psi \|_{S^\frac{1}{2}(I)} \|\psi  \|_{D(I)}\Big)^\frac{1}{2} \label{eqn:thm lwp sharp:schro est}\\
      \| \mc{J}_0[\overline{\varphi} \psi] \|_{W(I)} &\lesa \Big( \|\varphi\|_{S^\frac{1}{2}(I)} \| \psi \|_{S^{\frac{1}{2}}(I)} \Big)^\frac{1}{2} \Big( \| \varphi\|_{D(I)} \| \psi \|_{D(I)} \Big)^\frac{1}{2} \label{eqn:thm lwp sharp:wave est}
    \end{align}
On the other hand, an application of the uniform Strichartz estimate, Theorem \ref{thm:unif Stz}, implies that if
    $$(i\p_t + \Delta - \Re(V_0)) \psi = F$$
with $\psi(0) = f$ then
    \begin{equation}\label{eqn:thm lwp sharp:energy}
        \| \psi \|_{S^{\frac{1}{2}}(I)} \lesa_B \| f \|_{H^{\frac{1}{2}}} + \| F \|_{N^{\frac{1}{2}}}.
    \end{equation}
Writing $\psi  = \mc{U}_{\Re(V_0)} f + \mc{I}_{\Re(V_0)}[F] = e^{it\Delta} f + \mc{I}_0[ \Re(V_0) e^{it\Delta} f] + \mc{I}_{\Re(V_0)}[F]$, an application of \eqref{eqn:thm lwp sharp:schro est}, \eqref{eqn:thm lwp sharp:energy}, and Lemma \ref{lem:energy ineq} gives
    \begin{align}
      \| \psi \|_{D(I)} &\les \| e^{it\Delta} f \|_{D(I)} + \| \mc{I}_0[\Re(V_0) e^{it\Delta}f] \|_{S^\frac{1}{2}(I)} + \| \mc{I}_{\Re(V_0)}[ F] \|_{S^\frac{1}{2}(I)} \notag \\
                        &\lesa_B \| e^{it\Delta} f \|_{D(I)} + \| g \|_{L^2_x}\Big( \| f \|_{H^\frac{1}{2}} \| e^{it\Delta} f \|_{D(I)}\Big)^\frac{1}{2} + \| F \|_{N^{\frac{1}{2}}(I)} \notag \\
                        &\lesa \Big( \| f \|_{H^\frac{1}{2}} \| e^{it\Delta} f \|_{D(I)}\Big)^\frac{1}{2} + \| F \|_{N^{\frac{1}{2}}(I)}. \label{eqn:thm lwp sharp:D energy}
    \end{align}
Note that the implied constants in the inequalities \eqref{eqn:thm lwp sharp:energy}--\eqref{eqn:thm lwp sharp:D energy} potentially depend on $B$ (in using Theorem \ref{thm:unif Stz}), but are otherwise independent of $(f,g) \in H^\frac{1}{2} \times L^2$. We now define a sequence $u_j$ recursively as
    $$ u_j = \mc{U}_{\Re(V_0)} f + \mc{I}_{\Re(V_0)} \big[ \mc{J}_0[ |\nabla| |u_{j-1}|^2 ] u_{j-1} \big]. $$
A standard application of the above bounds shows that provided $\epsilon>0$ is sufficiently small (depending only on the implied constants in \eqref{eqn:thm lwp sharp:schro est} - \eqref{eqn:thm lwp sharp:D energy}), $u_j$ is a Cauchy sequence, and hence converges to a solution $u\in S^\frac{1}{2}(I)$. The wave component is then defined as $V = e^{it|\nabla|} g + \mc{J}_0[ |\nabla||u|^2]$. Uniqueness of solutions satisfying $u \in L^2_{t,loc}(I_T; W^{\frac{1}{2},4}( \RR^4))$ follows from \cite[Theorem 7.7 and 8.1]{Candy2023}. Local Lipschitz continuity follows in the standard way from the estimates above.
\end{proof}

\subsection{Global well-posedness below the ground state}\label{subsec:gwp-proof} The proof of
Theorem \ref{thm:gwp below W} follows from the variational properties
of the ground state $W$ together with the refined local well-posedness
result in Theorem \ref{thm:lwp below W}. More precisely, we exploit
the following result from \cite{Guo2018}, slightly adjusted to our
sign convention.

\begin{lemma}[{\cite[Section 6]{Guo2018}}]\label{lem:en-bound}
Let $f \in \dot{H}^1(\RR^4)$ and $g \in L^2(\RR^4)$ with
        \begin{equation}\label{eqn:constraint}
              E_Z(f,g) < E_Z(W, -W^2) = \frac{1}{4} \| W^2 \|_{L^2(\RR^4)}^2, \qquad \| g \|_{L^2(\RR^4)} \les \| W^2 \|_{L^2(\RR^4)}.
        \end{equation}
Then we have the bounds
    $$ \| g \|^2_{L^2} \les 4 E_Z(f,g),  \qquad  \| \nabla f \|_{L^2}^2
    \les \frac{ \| W^2 \|_{L^2}}{\| W^2 \|_{L^2} - \| g \|_{L^2}}
    \big( 4 E_Z(f,g) - \| g \|_{L^2}^2\big)
    \les \|W^2\|_{L^2}^2$$
\end{lemma}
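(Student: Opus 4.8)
The plan is a direct variational computation built on the sharp Sobolev inequality \eqref{eq:cs}. First I would unpack the definition of the energy in \eqref{eq:mass-energy}: since $g$ is real-valued, $\Re(g)=g$ and
\[ 4\,E_Z(f,g) = 2\|\nabla f\|_{L^2}^2 + \|g\|_{L^2}^2 + 2\int_{\RR^4} g\,|f|^2\,dx . \]
The only sign-indefinite term is the cross term, and I would bound it by Cauchy--Schwarz followed by \eqref{eq:cs}, using that the ground state equation $-\Delta W=W^3$ collapses the Sobolev constant: testing $-\Delta W=W^3$ against $W$ gives $\|\nabla W\|_{L^2}^2=\int_{\RR^4} W^4=\|W\|_{L^4}^4=\|W^2\|_{L^2}^2$, while $\|W\|_{L^4}^2=\|W^2\|_{L^2}$, so $\|W\|_{L^4}/\|\nabla W\|_{L^2}=\|W^2\|_{L^2}^{-1/2}$. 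Hence
\[ \Big|\int_{\RR^4} g\,|f|^2\,dx\Big| \le \|g\|_{L^2}\,\|f\|_{L^4}^2 \le \frac{\|g\|_{L^2}}{\|W^2\|_{L^2}}\,\|\nabla f\|_{L^2}^2 . \]

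Substituting this lower bound for the cross term into the identity and invoking the hypothesis $\|g\|_{L^2}\le\|W^2\|_{L^2}$ yields the coercivity estimate
\[ 4\,E_Z(f,g) \ \ge\ \|g\|_{L^2}^2 + 2\Big(1-\frac{\|g\|_{L^2}}{\|W^2\|_{L^2}}\Big)\|\nabla f\|_{L^2}^2 , \]
in which the coefficient of $\|\nabla f\|_{L^2}^2$ is nonnegative. Discarding the gradient term gives the first claimed bound $\|g\|_{L^2}^2\le 4E_Z(f,g)$. Combining this with the strict energy constraint $4E_Z(f,g)<4E_Z(W,-W^2)=\|W^2\|_{L^2}^2$ upgrades the wave-mass hypothesis to the \emph{strict} inequality $\|g\|_{L^2}<\|W^2\|_{L^2}$, so the coefficient above is positive and can be divided out, producing $\|\nabla f\|_{L^2}^2\le \tfrac{\|W^2\|_{L^2}}{2(\|W^2\|_{L^2}-\|g\|_{L^2})}\big(4E_Z(f,g)-\|g\|_{L^2}^2\big)$, which is (in fact slightly stronger than) the second claimed inequality. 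Finally I would use $4E_Z(f,g)-\|g\|_{L^2}^2<\|W^2\|_{L^2}^2-\|g\|_{L^2}^2=(\|W^2\|_{L^2}-\|g\|_{L^2})(\|W^2\|_{L^2}+\|g\|_{L^2})$ together with $\|g\|_{L^2}\le\|W^2\|_{L^2}$ once more to collapse the right-hand side to at most $\tfrac12\|W^2\|_{L^2}(\|W^2\|_{L^2}+\|g\|_{L^2})\le\|W^2\|_{L^2}^2$.

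There is no real obstacle here: once the sharp Sobolev constant has been identified with $\|W^2\|_{L^2}^{-1}$ via $-\Delta W=W^3$, the entire statement is an algebraic rearrangement. The only point needing a moment's care is the possible degeneracy when $\|g\|_{L^2}$ sits exactly at the threshold $\|W^2\|_{L^2}$; as observed above, the first bound combined with the strict energy inequality rules this out automatically, so — unlike the Kenig--Merle analysis for the cubic NLS, where a continuity/connectedness argument is needed to select the correct side of the ground-state sphere — here the coercive wave term $\tfrac14\|g\|_{L^2}^2$ together with the hypothesis $\|g\|_{L^2}\le\|W^2\|_{L^2}$ already pin down the relevant domain, and no such argument is required.
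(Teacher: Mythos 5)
Your proposal is correct and is essentially the paper's own argument: Cauchy--Schwarz on the cross term, the sharp Sobolev inequality with constant $\|W^2\|_{L^2}^{-1/2}$ identified via $-\Delta W=W^3$, and then the same algebraic rearrangements (your intermediate bounds even carry the same extra factor of $2$ before being relaxed to the stated form). The only nitpick is that in the application $g=V(t)$ is complex-valued, so one should phrase the first step as $\Re\langle g,|f|^2\rangle\ge -\|g\|_{L^2}\|f\|_{L^4}^2$ rather than assuming $g$ real, which changes nothing in the argument.
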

\begin{proof}
We begin by observing that for any $f \in \dot{H}^1(\RR^4)$ and $g \in L^2(\RR^4)$, the properties of the ground state $W$ imply that
     \begin{align}
           E_Z(f,g) &= \frac{1}{2} \| \nabla f \|_{L^2}^2 + \frac{1}{4} \| g \|_{L^2}^2 +\frac{1}{2} \Re \lr{g, |f|^2} \notag \\
                    &\g \frac{1}{2} \| \nabla f \|_{L^2}^2 + \frac{1}{4} \| g \|_{L^2}^2 - \frac{1}{2} \|g \|_{L^2} \| f \|_{L^4}^2 \notag \\
                    &\g \frac{1}{2} \|W^2\|_{L^2}^{-1} \big( \| W^2 \|_{L^2} - \| g \|_{L^2} \big) \| \nabla f \|_{L^2}^2 + \frac{1}{4} \| g \|_{L^2}^2.
                    \label{eqn:energy chain 1}
        \end{align}
If we assume that in addition $(f,g)$ satisfy the constraints in \eqref{eqn:constraint}, then \eqref{eqn:energy chain 1} immediately gives
    $$ \| g \|_{L^2}^2 \les 4 E_Z(f,g) $$
and similarly
    \begin{align*}
        2 \| \nabla f \|_{L^2}^2 &\les \frac{ \| W^2 \|_{L^2}}{\| W^2 \|_{L^2} - \| g \|_{L^2}} \big( 4 E_Z(f,g) - \| g \|_{L^2}^2\big)\\
            &\les \frac{ \| W^2 \|_{L^2}}{\| W^2 \|_{L^2} - \| g \|_{L^2}} \big( \|W^2 \|_{L^2}^2 - \| g \|_{L^2}^2\big) \les \| W^2 \|_{L^2}\big( \|W^2 \|_{L^2} + \| g \|_{L^2}\big),
    \end{align*}
    which completes the proof.
  \end{proof}

\begin{proof}[Proof of Theorem \ref{thm:gwp below W}]
Suppose now that $(s,\ell)\in\R^2$ satisfy \eqref{eqn:cond on s l} and
$s\ge 1$. Let $(u(0),V(0))\in H^s\times H^\ell$ be given initial data
satisfying
\[
E_Z(u(0),V(0))<\frac14 \|W^2\|_{L^2}^2-\e , \; \|V(0)\|_{L^2}\les \| W^2 \|_{L^2}.
\]
We may restrict to positive times by reversability.
An application of Lemma \ref{lem:en-bound} shows that we must have the
strict inequality $\|V(0)\|_{L^2}<\| W^2 \|_{L^2}$. Let $T^*$ be the
supremum of all $T\g 0$ such that there exists a solution $(u,V)\in
C([0,T],H^s\times H^\ell)\cap L^2([0,T],W^{\frac12,4})$
which conserves mass and energy. Then, we claim that for all $t\in [0,T]$,
\begin{equation}\label{eq:em}E_Z(u(t),V(t))<\frac14 \|W^2\|_{L^2}^2-\e, \;
  \|V(t)\|_{L^2}<\|W^2\|_{L^2}.
\end{equation}
Otherwise, let $t_*$ denotes the infimum of all $t \in [0,T]$ such
that $\|V(t)\|_{L^2}\g \|W^2\|_{L^2}$. By continuity and energy
conservation we must have $\|V(t_*)\|_{L^2}=\|W^2\|_{L^2}$ and Lemma
\ref{lem:en-bound} yields the contradiction
$\|V(t_*)\|_{L^2}<\|W^2\|_{L^2}$, so that \eqref{eq:em} is proved.

Suppose now, for
the sake of contradiction, that $T^*<\infty$. According to
\cite[Theorem 7.6]{Candy2023} there exists a sequence of $T_n>0$ with
$T_n \nearrow T^*$ and the solution $(u,V)\in
C([0,T_n],H^s\times H^\ell)\cap L^2([0,T_n],W^{\frac12,4})$
conserves mass and energy. Lemma \ref{lem:en-bound} implies
\[
\|V(T_n)\|_{L^2}^2\les 4 E_Z(u(T_n),V(T_n))= 4 E_Z(u(0),V(0))<\|W^2\|^2_{L^2}-4\e.
\]
Further,
\[
\|u(T_n)\|_{H^{1}}^2\les \|\nabla
u(T_n)\|_{L^2}^2+\|u(0)\|_{L^2}^2\les \|W^2\|^2_{L^2}+m^2.
\]
Both of these bounds are uniform in $n$.
Therefore, by Theorem \ref{thm:lwp below W} there exists a time
$\tau=\tau (\e,m)$, independent of $n$, such that we can extend the
solution to $(u,V)\in
C([0,T_n+\tau],H^{1}\times L^2)\cap
L^2([0,T_n+\tau],W^{\frac12,4})$. By \cite[Theorem 8.1]{Candy2023} we have $(u,V)\in
C([0,T_n+\tau],H^s\times H^\ell)\cap
L^2([0,T_n+\tau],W^{\frac12,4})$ and mass and energy are
conserved. For large enough $n$ we have $T_n+\tau>T^*$, which is in contradiction to the
definition of $T^*$, and we conclude that $T^*=\infty$. In addition,
\cite[Theorem 8.1]{Candy2023} implies that for any $T<\infty$ the
flow map $ (u(0),V(0))\mapsto (u,V)$ is Lipschitz continuous as a map
from a small ball of initial data in $H^s\times H^\ell$ below the
ground state
to  solutions in
$C([0,T],H^s\times H^\ell)\cap
L^2([0,T],W^{\frac12,4})$.
Also, the argument above and Lemma \ref{lem:en-bound} imply that \[\sup_{t \in[0,\infty)}\|\nabla u(t)\|_{L^2}^2\les
  2\e^{-1}\|W^2\|^2_{L^2}E_Z(u(0),V(0)),\; \sup_{t\in
    [0,\infty)}\|V(t)\|_{L^2}^2 \les 4E_Z(u(0),V(0)).\]
\end{proof}

\subsection*{Acknowledgements}
Financial support by the German Research Foundation (DFG) through the CRC 1283 ``Taming uncertainty and profiting from randomness and low regularity in analysis, stochastics and their
  applications'' is acknowledged. T. C. is supported by the Marsden Fund Council grant 19-UOO-142, managed by Royal Society Te Ap\={a}rangi.

\bibliographystyle{amsplain}
\bibliography{Zakharov}
\end{document}